\newtheorem{thm}{Theorem}[section]
\newtheorem{cor}[thm]{Corollary}
\newtheorem{lem}[thm]{Lemma}
\newtheorem{prop}[thm]{Proposition}
\newtheorem{ques}[thm]{Question}
\theoremstyle{remark}
\newtheorem{rem}[thm]{Remark}
\newtheorem{con}[thm]{Convention}
\newtheorem{note}[thm]{Notation}
\theoremstyle{definition}
\newtheorem{defn}[thm]{Definition}
\newtheorem{exm}[thm]{Example}
\title{Tree approximation in quasi-trees}
\author{Alice Kerr}
\date{}
\begin{document}
	
\maketitle
	
\begin{abstract}
	 In this paper we investigate the geometric properties of quasi-trees, and prove some equivalent criteria. We give a general construction of a tree that approximates the ends of a geodesic space, and use this to prove that every quasi-tree is $(1,C)$-quasi-isometric to a simplicial tree. As a consequence, we show that Gromov's tree approximation lemma for hyperbolic spaces \cite{Gromov1987} can be improved in the case of quasi-trees to give a uniform approximation for any set of points, independent of cardinality. From this we show that having uniform tree approximation for finite subsets is equivalent to being able to uniformly approximate the entire space by a tree. As another consequence, we note that the boundary of a quasi-tree is isometric to the boundary of its approximating tree under a certain choice of visual metric, and that this gives a natural extension of the standard metric on the boundary of a tree.
\end{abstract}
	
\section{Introduction}

A \emph{quasi-tree} is a geodesic metric space that is quasi-isometric to a simplicial tree. Quasi-trees are hyperbolic spaces, however the existence of such a quasi-isometry means that they retain many more of the strong geometric properties that trees enjoy. For this reason, they can often act as a natural generalisation of trees, especially in the cases where the space simply being hyperbolic would be too weak of an assumption.

Quasi-trees are an object of interest not just because of their tree-like properties, but because of the group actions that they admit. Importantly, there are large classes of groups which have interesting actions on quasi-trees, but do not have any similar actions on trees. Many such examples were provided by Bestvina, Bromberg, and Fujiwara \cite{Bestvina2015}, including mapping class groups. It was additionally shown by Kim and Koberda that right-angled Artin groups admit natural acylindrical actions on quasi-trees, where the quasi-trees in question are constructed from the defining graph of the group \cite{Kim2013,Kim2014}. Balasubramanya later proved that, in fact, every acylindrically hyperbolic group admits an acylindrical action on some quasi-tree \cite{Balasubramanya2017}. This gives a strong motivation for proving new properties for quasi-trees, as they may allow us to obtain new results for these large classes of groups.

When attempting to find new properties of quasi-trees, we have two obvious options: generalise a result that is true for trees, or improve a result that has already been shown for hyperbolic spaces. The original aim for this paper was to do the latter for a result known as Gromov's tree approximation lemma, which essentially says that any finite set of points in a hyperbolic space can be approximated by a finite subset of a tree, with the error dependent on the number of points we are trying to approximate.

\begin{prop}[Gromov's tree approximation lemma]
	\emph{\cite[p.~155--157]{Gromov1987}}
	\label{IntroTreeApprox}
	Let $X$ be a $\delta$-hyperbolic geodesic metric space. Let $x_0,z_1,\ldots,z_n\in X$, and let $Y$ be a union of geodesic segments $\bigcup_{i=1}^n[x_0,z_i]$. Then there exist an $\mathbb{R}$-tree $T$ and a map $f:(Y,d)\to (T,d')$ such that:
	\begin{enumerate}[(1)]
		\item For all $1\leqslant i\leqslant n$, the restriction of $f$ to the geodesic segment $[x_0,z_i]$ is an isometry.
		\item For all $x,y\in Y$, we have that $d(x,y)-2\delta(\log_2(n)+1)\leqslant d'(f(x),f(y))\leqslant d(x,y)$.
	\end{enumerate}
\end{prop}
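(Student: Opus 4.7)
The plan is to induct on $n$, with the case $n=2$ providing the basic tripod approximation and the inductive step combining two sub-approximations via a divide-and-conquer merge. For $n=1$ the result is trivial, with $T=[x_0,z_1]$ and $f$ the identity. For $n=2$, I would build $T$ as a tripod with central vertex $v$ and edges of lengths $(z_1|z_2)_{x_0}$, $d(x_0,z_1)-(z_1|z_2)_{x_0}$, $d(x_0,z_2)-(z_1|z_2)_{x_0}$, and let $f$ be the arclength parameterisation of $[x_0,z_1]\cup[x_0,z_2]$. Condition (1) is immediate, the upper bound in (2) follows because $f$ can only shorten paths between two geodesics sharing an endpoint, and the lower bound with error at most $2\delta$ follows from the Gromov four-point condition applied to $\{x_0,x,y,z_i\}$ for appropriate $i$.

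For $n\geqslant 3$, I would partition $\{z_1,\ldots,z_n\}$ into groups $A$ and $B$ of size at most $\lceil n/2\rceil$ and inductively produce tree approximations $f_A:Y_A\to T_A$ and $f_B:Y_B\to T_B$ with error at most $2\delta\lceil\log_2\lceil n/2\rceil\rceil$. To merge, I pick representatives $z_a\in A$ and $z_b\in B$, set $r=(z_a|z_b)_{x_0}$, and glue $T_A$ and $T_B$ by identifying the initial length-$r$ subsegments of $f_A([x_0,z_a])$ and $f_B([x_0,z_b])$ (which are genuinely isometric copies of $[x_0,z_a]|_{[0,r]}$ by condition (1) applied to each sub-approximation). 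The resulting tree $T$ with the induced map $f$ satisfies condition (1) by construction.

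The main obstacle is to bound the new error produced by the merge uniformly by $2\delta$ on cross-pairs $x\in Y_A$, $y\in Y_B$. For such a pair, the tree distance $d'(f(x),f(y))$ is computed by travelling back in $T_A$ to the glued stem, across it, and out into $T_B$, and hence can be written explicitly in terms of the Gromov products $(x|z_a)_{x_0}$, $(z_a|z_b)_{x_0}$, and $(z_b|y)_{x_0}$. Comparing this expression with $d(x,y)$ reduces, through repeated use of the $\delta$-four-point condition in $X$, to a $2\delta$-error estimate; same-group pairs inherit their error from the sub-approximation. This yields the recursion $T(n)\leqslant T(\lceil n/2\rceil)+2\delta$, which unwinds to $2\delta\lceil\log_2 n\rceil\leqslant 2\delta(\log_2 n+1)$, completing the induction. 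The geometric content is concentrated in the $2\delta$ four-point estimate at each merge; the logarithm bookkeeping is routine.
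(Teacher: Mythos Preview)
Your divide-and-conquer strategy has a genuine gap in the merge step: the gluing you describe can \emph{increase} distances, so the exact upper bound $d'(f(x),f(y))\leqslant d(x,y)$ in~(2) fails. Here is a counterexample with $\delta=0$ (so the bound should be exact). Take $X$ to be a tree containing points $x_0,z_1,z_2,z_3,z_4$, all $z_i$ at distance $10$ from $x_0$, with Gromov products
\[
(z_1,z_2)_{x_0}=8,\qquad (z_1,z_3)_{x_0}=(z_2,z_4)_{x_0}=(z_3,z_4)_{x_0}=3.
\]
(One checks these are consistent with a $0$-hyperbolic metric.) Partition $A=\{z_1,z_3\}$, $B=\{z_2,z_4\}$ and pick representatives $z_a=z_3$, $z_b=z_4$, so $r=(z_3,z_4)_{x_0}=3$. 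The sub-trees $T_A,T_B$ are tripods with branch points at distance~$3$; after gluing their initial length-$3$ stems, all four branches leave the stem at the same point, giving $(f(z_1),f(z_2))^T_{f(x_0)}=3$. Hence $d_T(f(z_1),f(z_2))=14$, while $d(z_1,z_2)=4$. The problem is that an arbitrary partition can separate points that are close in $X$, and gluing along a single representative pair cannot repair this; the tree Gromov product between cross-pairs is \emph{not} expressible purely in terms of $(x|z_a)_{x_0}$, $(z_a|z_b)_{x_0}$, $(z_b|y)_{x_0}$ as you claim, because it also depends on the branch structure of $T_A$ and $T_B$ produced by the induction.

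The argument the paper follows (after Gromov and Coornaert--Delzant--Papadopoulos) avoids this by constructing the tree globally rather than recursively: one defines $(x,y)'_{x_0}=\sup_{S_{x,y}}\min_i(x_i,x_{i+1})_{x_0}$ over all finite chains from $x$ to $y$, sets $d'(x,y)=d(x_0,x)+d(x_0,y)-2(x,y)'_{x_0}$, and takes $T$ to be the metric quotient. The upper bound $d'\leqslant d$ is then immediate from $(x,y)'_{x_0}\geqslant(x,y)_{x_0}$, and the lower bound with error $2k\delta$ comes from the iterated four-point inequality $(x_1,x_n)_{x_0}\geqslant\min_i(x_i,x_{i+1})_{x_0}-k\delta$ for $n\leqslant 2^k+1$ (Lemma~\ref{HyperbolicInequality}). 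This is where the logarithm enters, and it is the only place hyperbolicity is used.
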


\begin{figure}[h]
	\centering
	\includegraphics[width=0.9\textwidth]{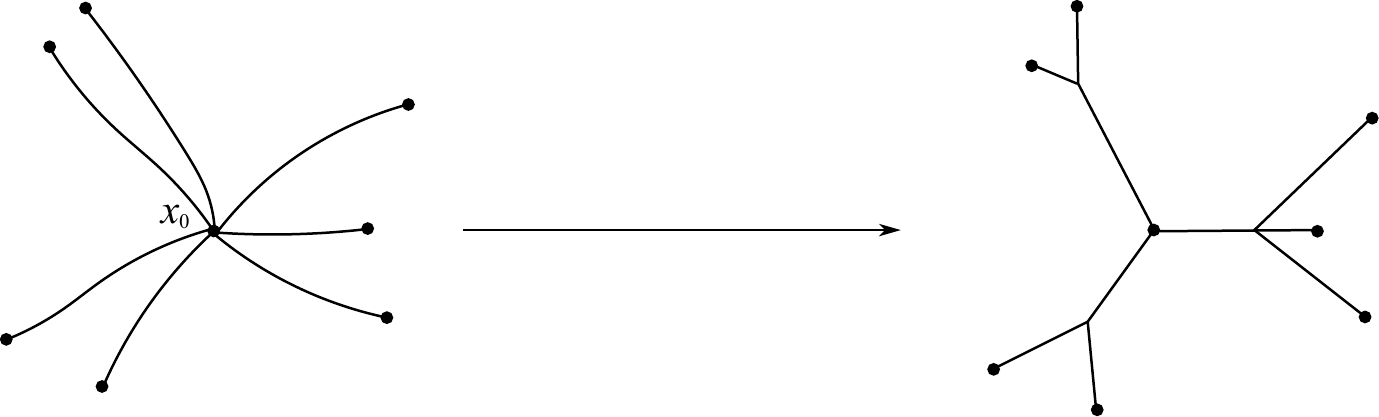}
	\caption{Tree approximation in a hyperbolic space}
\end{figure}

This result was used by Delzant and Steenbock \cite{Delzant2020} to generalise a theorem they proved about groups acting acylindrically on trees, from which they were able to obtain a theorem about groups acting acylindrically on hyperbolic spaces. The result for hyperbolic spaces was not as strong as their result for trees, which was a consequence of the logarithmic error that Gromov's tree approximation lemma introduced.

For general hyperbolic spaces, we would not expect to be able to improve the order of this error, however it is natural to ask whether we may be able to do better when we restrict ourselves to trying to approximate subsets of quasi-trees. In particular, we may ask if the approximation could be made to be uniform, in the sense that the error would no longer depend on the number of points being approximated.

It turns out that we can in fact prove a much stronger result than this, as we can show that the entire quasi-tree can be approximated by a tree with uniform error.

\begin{prop}[\Cref{QuasiIsometry2}]
	\label{IntroQuasiIsometry2}
	For every quasi-tree $X$, there exist an $\mathbb{R}$-tree $T$ and a constant $C\geqslant 0$ such that $X$ is $(1,C)$-quasi-isometric to $T$.
\end{prop}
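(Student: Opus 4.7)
The plan is to construct an $\mathbb{R}$-tree $T$ directly from the geometry of $X$ together with a natural projection $f : X \to T$, and then check that this map is a $(1,C)$-quasi-isometry. The key input will be Manning's characterisation of quasi-trees by the bottleneck property: there is a constant $\Delta$ such that for any geodesic $[x,y]$ in $X$ with midpoint $m$, every other path from $x$ to $y$ must come within $\Delta$ of $m$. This is strictly stronger than hyperbolicity, and is precisely what should allow the logarithmic error in Gromov's Tree Approximation Lemma to be absorbed into a uniform additive constant.

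First I would fix a basepoint $x_0 \in X$ and, for each $y \in X$, choose a geodesic $\gamma_y$ from $x_0$ to $y$ parametrised by arc length. Given two such geodesics $\gamma_y$ and $\gamma_z$, the bottleneck property forces the set of times where they remain within $\Delta$ of each other to be, up to bounded error, an initial segment $[0, \tau(y,z)]$; here $\tau$ plays the role of a coarse Gromov product, and satisfies the ultrametric inequality $\tau(x,z) \geqslant \min\{\tau(x,y), \tau(y,z)\}$ up to an additive error depending only on $\Delta$. I would then build $T$ as the quotient of the disjoint union $\bigsqcup_{y \in X}[0, d(x_0,y)]$ obtained by gluing the initial subsegment $[0, \tau(y,z)]$ of $\gamma_y$ to that of $\gamma_z$, and show that the resulting space is $0$-hyperbolic and geodesic, i.e.\ an $\mathbb{R}$-tree. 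The map $f : X \to T$ sends $y$ to the image of the endpoint of $\gamma_y$.

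For the $(1, C)$ estimate, the upper bound $d_T(f(y), f(z)) \leqslant d(y,z)$ is essentially automatic, since $T$ is obtained by identifications that only decrease distances. For the lower bound one computes $d_T(f(y), f(z)) = d(x_0,y) + d(x_0,z) - 2\tau(y,z)$ and argues using the bottleneck property that any geodesic $[y,z]$ in $X$ must pass within a bounded distance of $\gamma_y(\tau(y,z))$, which gives $d(y,z) \leqslant d_T(f(y), f(z)) + C$ for a constant $C$ depending only on $\Delta$. Coarse surjectivity of $f$ holds because every point of $T$ lies on the image of some initial segment of a $\gamma_y$, and hence in the image of a point of $X$.

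The main obstacle I anticipate is controlling the uniformity of $C$. Unlike in Gromov's lemma we cannot afford an error that grows with the number of rays being glued; the additive error from each identification must stay bounded however many $\gamma_y$'s happen to coincide or branch in a given region. This is exactly where merely being $\delta$-hyperbolic is not enough and the bottleneck property is required: in a general hyperbolic space many almost-parallel geodesics can accumulate in a tube and produce the logarithmic loss, whereas the bottleneck property caps the total accumulated error in terms of $\Delta$ alone. A secondary technical point is that the ultrametric inequality on $\tau$ holds only up to bounded error, so some care is needed to ensure the quotient is an honest $\mathbb{R}$-tree rather than a new quasi-tree; this likely requires either passing to the ultrametric closure of $\tau$ or arguing directly that the quotient space is $0$-hyperbolic.
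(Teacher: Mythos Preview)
Your proposal is correct and follows the same broad strategy as the paper: fix a basepoint, build a tree via a modified Gromov product, and use the bottleneck property to show the quotient map is a $(1,C)$-quasi-isometry.

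The one substantive difference is in how the tree is built, and it addresses precisely the obstacle you flag at the end. Rather than defining a coarse branching time $\tau(y,z)$ from chosen geodesics and then repairing the approximate ultrametric inequality, the paper defines
\[
(x,y)'_{x_0} \;=\; \sup_{x=x_1,\ldots,x_n=y}\ \min_{1 \leqslant i \leqslant n-1} (x_i,x_{i+1})_{x_0},
\]
the supremum taken over all finite chains of points from $x$ to $y$. This quantity satisfies the \emph{exact} ultrametric inequality $(x,z)'_{x_0} \geqslant \min\{(x,y)'_{x_0},(y,z)'_{x_0}\}$ simply by concatenating chains, so the associated pseudometric $d'(x,y) = d(x_0,x)+d(x_0,y)-2(x,y)'_{x_0}$ is $0$-hyperbolic from the outset, and its metric quotient $T_X$ is an $\mathbb{R}$-tree once one checks it is geodesic. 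The bottleneck property then enters only once, to prove that in a quasi-tree $(x,y)'_{x_0} \leqslant (x,y)_{x_0} + A$ for some uniform $A$ depending only on $\Delta$ and $\delta$; this is stated as a separate characterisation of quasi-trees (your uniform Gromov-product inequality) and gives $d(x,y)-2A \leqslant d'(x,y) \leqslant d(x,y)$ directly.

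In other words, your proposed fix of ``passing to the ultrametric closure of $\tau$'' is exactly what the paper does from the start, and working with chains of arbitrary points rather than chosen geodesics removes any dependence on geodesic choices. Your intuition about where the bottleneck constant enters and why mere hyperbolicity is insufficient is spot on.
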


One immediate implication of this, and the fact that $\mathbb{R}$-trees are $(1,C)$-quasi-isometric to simplicial trees (see \Cref{Simplicial}), is that quasi-trees are exactly the geodesic spaces that are $(1,C)$-quasi-isometric to simplicial trees. This is the central result of this paper.

\begin{thm}[\Cref{SimplicialIff}]
	\label{IntroSimplicialIff}
	A geodesic metric space $X$ is a quasi-tree if and only if it is $(1,C)$-quasi-isometric to a simplicial tree for some $C\geqslant 0$.
\end{thm}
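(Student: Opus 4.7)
The plan is to derive the theorem directly from \Cref{IntroQuasiIsometry2}, together with the referenced fact (\Cref{Simplicial}) that every $\mathbb{R}$-tree is $(1,C)$-quasi-isometric to a simplicial tree. The backward direction is essentially a matter of unpacking the definition, so the content lies entirely in the forward direction and in ensuring that the composition of two $(1,C)$-quasi-isometries is again of the form $(1,C')$.

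For the forward direction, suppose $X$ is a quasi-tree. By \Cref{IntroQuasiIsometry2} there exist an $\mathbb{R}$-tree $T$ and a constant $C_1\geqslant 0$ with a $(1,C_1)$-quasi-isometry $f:X\to T$. By \Cref{Simplicial}, there exist a simplicial tree $T'$ and a constant $C_2\geqslant 0$ with a $(1,C_2)$-quasi-isometry $g:T\to T'$. I would then observe that the composition $g\circ f:X\to T'$ is a $(1,C_1+C_2)$-quasi-isometry: the additive distortion bounds chain together in the obvious way, and the quasi-surjectivity constants combine additively too (any point of $T'$ is within $C_2$ of $g(t)$ for some $t\in T$, and $t$ is within $C_1$ of $f(x)$ for some $x\in X$, so $g(f(x))$ lies within $C_1+C_2$ of the given point after a further application of $g$'s distortion bound).

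For the backward direction, suppose $X$ is a geodesic metric space and $X$ is $(1,C)$-quasi-isometric to a simplicial tree $T'$. Since any $(1,C)$-quasi-isometry is in particular a quasi-isometry, and $T'$ is a simplicial tree, this immediately gives that $X$ is a quasi-tree by definition.

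I do not expect any genuine obstacle here, since the two hard inputs (the uniform $\mathbb{R}$-tree approximation of \Cref{IntroQuasiIsometry2} and the simplicial approximation of an $\mathbb{R}$-tree in \Cref{Simplicial}) are already established earlier in the paper. The only point to handle carefully is the composition step, making sure the multiplicative constant stays equal to $1$ rather than becoming a product, which is exactly why having the intermediate approximations with multiplicative constant $1$ (and not, say, some $(\lambda,C)$ with $\lambda>1$) is what makes this clean.
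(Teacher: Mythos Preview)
Your proposal is correct and follows essentially the same route as the paper: invoke the $(1,C_1)$-quasi-isometry from $X$ to an $\mathbb{R}$-tree (\Cref{IntroQuasiIsometry2}), compose with the $(1,C_2)$-quasi-isometry from an $\mathbb{R}$-tree to a simplicial tree (\Cref{Simplicial}), and note the backward direction is immediate from the definition. One minor slip: the coarse surjectivity constant for the composition is $C_1+2C_2$ rather than $C_1+C_2$ (exactly as in the paper's \Cref{QuasiTransitive}), since you pick up an extra $C_2$ from $g$'s distortion bound on top of the $C_2$ from $g$'s surjectivity; this does not affect the conclusion.
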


A key step in proving these results is that we are able to obtain another equivalent definition of a quasi-tree, which can be compared to the definition of hyperbolic spaces using the Gromov product (see \Cref{HypDef1}).

\begin{prop}[\Cref{Constant}]
	\label{IntroConstant}
	A geodesic metric space $(X,d)$ is a quasi-tree if and only if there exists $A\geqslant 0$ such that any $x_0,x_1,\ldots,x_n\in X$ satisfy
	\begin{equation*}
		(x_1,x_n)_{x_0}\geqslant\min_{1\leqslant i\leqslant n-1}(x_i,x_{i+1})_{x_0}-A.
	\end{equation*}
\end{prop}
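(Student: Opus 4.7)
For the forward direction, assume $X$ is a quasi-tree. By \Cref{IntroQuasiIsometry2} there exist an $\mathbb{R}$-tree $T$ and a $(1,C)$-quasi-isometry $f\colon X\to T$. In any $\mathbb{R}$-tree the four-point condition holds with $\delta=0$, and a simple induction on $n$ (iterating $(y_1,y_n)_{y_0}\geqslant\min\{(y_1,y_{n-1})_{y_0},(y_{n-1},y_n)_{y_0}\}$) upgrades this to $(y_1,y_n)_{y_0}\geqslant\min_{1\leqslant i\leqslant n-1}(y_i,y_{i+1})_{y_0}$ in $T$, with no error term. Since each Gromov product is a fixed linear combination of three distances, a $(1,C)$-quasi-isometry distorts Gromov products by at most an additive constant depending only on $C$, so transferring the tree inequality back along $f$ yields the desired uniform inequality in $X$, with $A$ a linear function of $C$.

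For the backward direction, assume the inequality with constant $A$. The case $n=3$ is the usual four-point Gromov product definition, so $X$ is $A$-hyperbolic. To upgrade this to being a quasi-tree I would build an $\mathbb{R}$-tree $T$ and a $(1,C')$-quasi-isometry $f\colon X\to T$; since $\mathbb{R}$-trees are $(1,C)$-quasi-isometric to simplicial trees (as noted in the excerpt), composition then realises $X$ as a quasi-tree. The construction would follow the tree-from-ends approach announced in the introduction: fix a basepoint $x_0\in X$, build an $\mathbb{R}$-tree $T$ whose branching structure is governed by Gromov products based at $x_0$, and define $f$ by sending each $x\in X$ to an appropriate point along a geodesic ray from $x_0$ through $x$. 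Such an $f$ is automatically isometric along any single geodesic from $x_0$, and the content of the hypothesis is that distances between points on \emph{different} rays are preserved up to a uniform additive constant, which is exactly what is needed for the $(1,C')$-bound. Recall that in Gromov's proof of \Cref{IntroTreeApprox} the $\log_2(n)$ term arises from iterating the four-point condition to bound $(x_i,x_j)_{x_0}$ below by $\min_{i\leqslant k<j}(x_k,x_{k+1})_{x_0}$; the assumed inequality replaces this telescoping loss by the single constant $A$.

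The main obstacle lies in the backward direction: one needs more than uniform approximation on each finite subset, namely a single global tree $T$ and a single global map $f\colon X\to T$ that is simultaneously a $(1,C')$-quasi-isometry. The tree-from-ends construction supplies the necessary coherence, but verifying the $(1,C')$-bound requires carefully bookkeeping how the Gromov products enter the definition of the metric on $T$, and checking that the hypothesis -- not merely $A$-hyperbolicity, which would only yield the usual $\log_2$-type error -- is what makes the additive distortion independent of the configuration.
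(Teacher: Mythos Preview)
Your forward direction is circular. In the paper, \Cref{IntroQuasiIsometry2} is the statement of \Cref{QuasiIsometry2}, which is derived from \Cref{QuasiIsometry}, whose proof explicitly invokes the forward direction of \Cref{Constant} itself (it is precisely the inequality $(x,y)'_{x_0}\leqslant (x,y)_{x_0}+\Delta+2\delta$ that makes the end-approximating map a $(1,C)$-quasi-isometry). So you cannot appeal to \Cref{IntroQuasiIsometry2} here. The paper's argument for the forward direction is instead direct from the bottleneck criterion: choose $z\in[x_1,x_n]$ with $d(x_0,z)-2\delta\leqslant(x_1,x_n)_{x_0}$ (\Cref{Aux1}), apply \Cref{Bottleneck} to the concatenated path $[x_1,x_2]\cup\cdots\cup[x_{n-1},x_n]$ to find $z'\in[x_i,x_{i+1}]\cap B(z,\Delta)$ for some $i$, and use $(x_i,x_{i+1})_{x_0}\leqslant d(x_0,z')$ (\Cref{Aux2}) to conclude with $A=\Delta+2\delta$.

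Your backward direction is a valid route in principle---the hypothesis gives $(x,y)'_{x_0}\leqslant (x,y)_{x_0}+A$ directly, so the end-approximating map of Section~3 would indeed be a $(1,2A)$-quasi-isometry onto an $\mathbb{R}$-tree---but it is substantially heavier than what the paper does. The paper verifies Manning's bottleneck criterion in a few lines: given a geodesic $[x,y]$ with midpoint $m$ and an arbitrary path $\gamma$ from $x$ to $y$, discretise $\gamma$ into $x=x_1,\ldots,x_n=y$ with $d(x_i,x_{i+1})\leqslant 2A$, apply the hypothesis with basepoint $m$ to get $\min_i(x_i,x_{i+1})_m\leqslant A$ (since $(x,y)_m=0$), and deduce $d(m,x_i)\leqslant 2A$ for some $i$. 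No tree construction is needed, and the bottleneck constant comes out as~$2A$.
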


One application of \Cref{IntroSimplicialIff} is to quasi-actions on trees. It is known that a group has a cobounded quasi-action on a simplicial tree if and only if it has a quasi-conjugate isometric action on a quasi-tree \cite{Manning2006}. \Cref{IntroSimplicialIff} allows us to show that this implies the existence of a quasi-conjugate $(1,C)$-quasi-action on some simplicial tree.

\begin{prop}[\Cref{QuasiAct}]
	\label{IntroQuasiAct}
	If a finitely generated group admits a cobounded $(L,C)$-quasi-action on a simplicial tree for some $L\geqslant 1, C\geqslant 0$, then it admits a quasi-conjugate $(1,C')$-quasi-action on a simplicial tree for some $C'\geqslant 0$.
\end{prop}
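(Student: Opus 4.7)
The strategy is to chain the Mosher--Sageev--Whyte equivalence \cite{Manning2006} with \Cref{IntroSimplicialIff}. From the given cobounded $(L,C)$-quasi-action of $G$ on a simplicial tree, that result produces a quasi-tree $X$ together with a cobounded isometric action of $G$ on $X$ that is quasi-conjugate to the original quasi-action. Because $X$ is a quasi-tree, \Cref{IntroSimplicialIff} yields a $(1,C')$-quasi-isometry $\phi\colon X\to T'$ to a simplicial tree $T'$; fix a quasi-inverse $\bar\phi$ of the same type, enlarging $C'$ if needed. It then suffices to transport the isometric $G$-action on $X$ across $\phi$ and verify that the resulting quasi-action has the claimed form.

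The transport step is the heart of the argument. I would define $g\cdot t := \phi(g\cdot\bar\phi(t))$ for $g\in G$ and $t\in T'$. Each map $t\mapsto g\cdot t$ is a composition of a $(1,C')$-quasi-isometry with an isometry and a $(1,C')$-quasi-isometry, which a direct calculation shows is itself of multiplicative constant $1$, with additive constant linear in $C'$. The almost-homomorphism bound $d'((gh)\cdot t,g\cdot(h\cdot t))\leqslant\epsilon$ holds uniformly because $\bar\phi\circ\phi$ is within bounded distance of the identity on $X$, and this error is pushed forward by the isometric action and then by $\phi$ with only additive loss. Coboundedness of the new quasi-action is inherited from coboundedness of the isometric action on $X$, and quasi-conjugacy with the original quasi-action on the first tree follows because quasi-conjugacy is closed under composition with quasi-isometries.

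The point I expect to require the most care, and which is what makes the statement work at all, is precisely that \Cref{IntroSimplicialIff} delivers a quasi-isometry with multiplicative constant equal to $1$: only then does conjugating an isometry by $\phi$ yield a $(1,C'')$-quasi-isometry rather than an $(L'',C'')$-quasi-isometry with $L''>1$. A general $(L,C)$-quasi-isometry to a simplicial tree, as provided by the defining property of a quasi-tree, would not suffice here. So aside from the bookkeeping in the transport step, all of the geometric content of \Cref{IntroQuasiAct} is really packaged inside \Cref{IntroSimplicialIff}.
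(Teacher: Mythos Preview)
Your proposal is correct and follows essentially the same route as the paper: invoke Manning's result to replace the cobounded quasi-action by a quasi-conjugate isometric action on a quasi-tree, apply \Cref{IntroSimplicialIff} to obtain a $(1,C')$-quasi-isometry $\phi$ to a simplicial tree, and then conjugate the isometric action across $\phi$ and its quasi-inverse, checking the quasi-action and quasi-conjugacy axioms. Your emphasis that the multiplicative constant $1$ in \Cref{IntroSimplicialIff} is precisely what makes the conjugated maps $(1,C'')$-quasi-isometries is exactly the point the paper's argument rests on.
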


We are also able to obtain results similar to \Cref{IntroSimplicialIff} for countable and locally finite simplicial trees.

\begin{prop}[\Cref{CountableFiniteCor2}]
	\label{IntroCountableFiniteCor2}
	We have the following:
	\begin{enumerate}[(1)]
		\item A geodesic metric space is $(L,C)$-quasi-isometric to a countable simplicial tree for some $L\geqslant 1, C\geqslant 0$ if and only if it is $(1,C')$-quasi-isometric to a countable simplicial tree for some $C'\geqslant 0$.
		\item A geodesic metric space is $(L,C)$-quasi-isometric to a locally finite simplicial tree for some $L\geqslant 1, C\geqslant 0$ if and only if it is $(1,C')$-quasi-isometric to a locally finite simplicial tree for some $C'\geqslant 0$.
	\end{enumerate}
\end{prop}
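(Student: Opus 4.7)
In both (1) and (2), the forward implication is immediate, since any $(1,C')$-quasi-isometry is already an $(L,C')$-quasi-isometry with $L = 1$.

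For the reverse direction, suppose $X$ is $(L,C)$-quasi-isometric to a countable (resp.\ locally finite) simplicial tree $T_0$. Then $X$ is a quasi-tree, so \Cref{IntroSimplicialIff} produces a simplicial tree $T_1$ and a $(1,C_1)$-quasi-isometry $X \to T_1$. The nontrivial content is that $T_1$ can be chosen to inherit the combinatorial hypothesis on $T_0$. My strategy is to inspect the constructions behind \Cref{IntroQuasiIsometry2} and \Cref{Simplicial}: the $\mathbb{R}$-tree approximating $X$ is built from a choice of basepoint and a family of geodesics in $X$, and the passage to a simplicial tree via \Cref{Simplicial} depends on a discretising net. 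Both choices are sufficiently flexible that the size and growth of $T_1$ can be controlled by making these choices reflect the properties of $T_0$.

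For part (1), a countable simplicial tree is separable, and the image of a countable dense subset of $T_0$ under the given quasi-isometry yields a countable coarsely dense subset of $X$. Running the construction of \Cref{IntroQuasiIsometry2} using only a countable net in $X$ and then discretising via \Cref{Simplicial} at a fixed positive scale produces a countable simplicial tree, giving the required $(1,C')$-quasi-isometry. For part (2), local finiteness of $T_0$ makes it a proper metric space, so the composition $T_0 \to X \to T_1$ being a quasi-isometry forces every $r$-separated subset of an $R$-ball in $T_1$ to be finite, with cardinality bounded in terms of $r$, $R$, and the quasi-isometry constants. Choosing the discretising net in \Cref{Simplicial} to be $r$-separated for some fixed $r$ larger than the additive quasi-isometry error then guarantees that every vertex of the resulting simplicial tree has only finitely many neighbours, hence that $T_1$ is locally finite.

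The main obstacle I expect is part (2). Local finiteness is a pointwise rather than uniform condition on valences, so the argument cannot just produce a single bounded-degree count; instead it must transport finite-valence at each vertex of $T_0$ through a composition of quasi-isometries to the corresponding vertex of $T_1$, and check that the discretisation step does not accidentally amalgamate infinitely many branches into a single vertex. Setting up the net carefully in terms of the quasi-isometry constants, and verifying locally finite valence vertex-by-vertex in $T_1$, is where the proof will require the most care.
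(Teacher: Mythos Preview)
Your overall strategy --- pass to the end-approximating $\mathbb{R}$-tree $T_X$ via \Cref{IntroQuasiIsometry2}, then argue that the simplicial approximation from \Cref{Simplicial} can be taken countable (resp.\ locally finite) --- is exactly the shape of the paper's argument. The invariants you isolate (a countable coarsely dense subset for (1), finiteness of $r$-separated subsets of balls for (2)) are correct and are close cousins of the component condition the paper uses in \Cref{CountableFinite1} and \Cref{CountableFinite2}.

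The gap is in how you propose to extract a countable or locally finite tree from these invariants. The construction in \Cref{Simplicial} has no free ``choice of discretising net'': the vertex set of $\Gamma$ is precisely $\bigcup_k S(x_0,k)$ in $T_X$, and distinct points on a sphere can be arbitrarily close, so these vertices are neither countable nor $r$-separated in general. Likewise, ``running the construction of \Cref{IntroQuasiIsometry2} using only a countable net'' does not make sense as written: $T_X$ is the quotient of all of $X$ by a pseudometric, and restricting to a countable $N\subset X$ yields a countable $0$-hyperbolic space that is not geodesic, hence not an $\mathbb{R}$-tree, so \Cref{Simplicial} no longer applies.

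What actually works --- and what the paper does in \Cref{CountableFiniteBackwards} --- is to build $\Gamma$ from $T_X$ as usual and then iteratively remove leaves a bounded number $n$ of times. After this pruning, distinct neighbours of a surviving vertex $v$ correspond to points of $\Gamma$ at distance $n$ from $v$ lying in distinct components of $\Gamma\setminus\{v\}$; such points are $2n$-separated inside $B(v,n)$. For $n$ chosen large relative to the quasi-isometry constants, your invariants then force this set of neighbours to be countable (resp.\ finite), which is the desired conclusion. So your plan is salvageable and close in spirit to the paper's, but the missing ingredient is leaf removal rather than a modified net.
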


This allows us to get analogues to \Cref{IntroQuasiAct} in the case of quasi-actions on countable or locally finite trees.

\begin{cor}[\Cref{CountableFiniteQuasiAct}]
	\label{IntroCountableFiniteQuasiAct}
	We have the following:
	\begin{enumerate}[(1)]
		\item If a finitely generated group admits a cobounded $(L,C)$-quasi-action on a countable simplicial tree for some $L\geqslant 1, C\geqslant 0$, then it admits a quasi-conjugate $(1,C')$-quasi-action on a countable simplicial tree for some $C'\geqslant 0$.
		\item If a finitely generated group admits a cobounded $(L,C)$-quasi-action on a locally finite simplicial tree for some $L\geqslant 1, C\geqslant 0$, then it admits a quasi-conjugate $(1,C')$-quasi-action on a locally finite simplicial tree for some $C'\geqslant 0$.
	\end{enumerate}
\end{cor}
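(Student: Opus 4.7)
The plan is to run essentially the same argument that establishes \Cref{IntroQuasiAct}, but to invoke \Cref{IntroCountableFiniteCor2} at the crucial step in place of \Cref{IntroSimplicialIff}, so that the simplicial tree we land on retains the countability or local finiteness of the original.

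More concretely, starting with a cobounded $(L,C)$-quasi-action of $G$ on a countable (resp.\ locally finite) simplicial tree $T$, I would first apply the result of Mosher--Sageev--Whyte / Manning \cite{Manning2006} to convert it into a quasi-conjugate isometric $G$-action on some quasi-tree $X$. Since a quasi-conjugacy in particular provides a quasi-isometry $T \to X$, the quasi-tree $X$ is a geodesic metric space that is quasi-isometric to a countable (resp.\ locally finite) simplicial tree.

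Next I would apply \Cref{IntroCountableFiniteCor2} to $X$: it promotes the existing quasi-isometry into a $(1,C_1)$-quasi-isometry $f\colon X \to T'$, where $T'$ is again countable (resp.\ locally finite). Pulling the isometric $G$-action on $X$ back along $f$ and a chosen coarse inverse $h\colon T' \to X$ gives a quasi-action on $T'$ via $g\cdot t := f(g\cdot h(t))$. A short bookkeeping computation, using that both $f$ and $h$ distort distances by only an additive constant and that the $G$-action on $X$ is by honest isometries, shows this is a $(1,C_2)$-quasi-action. Composing the resulting quasi-conjugacy $T' \leftrightarrow X$ with the Manning quasi-conjugacy $X \leftrightarrow T$ shows that this new quasi-action is quasi-conjugate to the original one, and coboundedness is transported along the same chain.

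I do not expect a serious obstacle: the real content has already been done inside \Cref{IntroCountableFiniteCor2}, and what remains is a standard transfer argument. The only point requiring minor care is verifying that the multiplicative constant genuinely stays at $1$ after the pull-back. This works precisely because $f$ and $h$ are $(1,\cdot)$-quasi-isometries while the intermediate action is by isometries, so only additive errors accumulate; had the intermediate action been a proper quasi-action one would pick up a multiplicative factor, and the conclusion with $L=1$ would fail.
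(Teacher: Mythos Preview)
Your proposal is correct and follows essentially the same route as the paper: the paper's proof of \Cref{CountableFiniteQuasiAct} is simply the one-line remark that one repeats the argument of \Cref{QuasiAct} verbatim, replacing the appeal to \Cref{SimplicialIff} by \Cref{CountableFiniteCor2}, which is exactly what you do. Your write-up in fact spells out more detail than the paper does, including the explicit pull-back formula and the observation about why the multiplicative constant stays equal to $1$.
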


Returning to our original motivation, the tree and quasi-isometry in \Cref{IntroQuasiIsometry2} are constructed as an extension of the method used to prove Gromov's tree approximation lemma, and so they satisfy the same conditions. Restricting this quasi-isometry to subsets gives us our desired improvement of Gromov's result in the case of quasi-trees.

\begin{prop}[\Cref{Uniform}]
	\label{IntroUniform}
	Let $(X,d)$ be a quasi-tree. Let $x_0\in X$, and let $Z\subset X$. Let $Y$ be a union of geodesic segments $\bigcup_{z\in Z}[x_0,z]$. Then there exist an $\mathbb{R}$-tree $T$, a map $f:(Y,d)\to (T,d^*)$, and a constant $C\geqslant 0$ such that:
	\begin{enumerate}[(1)]
		\item For all $z\in Z$, the restriction of $f$ to the geodesic segment $[x_0,z]$ is an isometry.
		\item For all $x,y\in Y$, we have that $d(x,y)-C\leqslant d^*(f(x),f(y))\leqslant d(x,y)$.
	\end{enumerate}
\end{prop}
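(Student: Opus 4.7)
The plan is to obtain \Cref{IntroUniform} essentially as a restriction of the $(1,C)$-quasi-isometry already produced in \Cref{IntroQuasiIsometry2}. As the author signals just before the statement, that quasi-isometry is built by extending Gromov's tree-approximation construction to an arbitrary family of basepoint geodesics, so with the right choice of basepoint it inherits both the isometry-on-segments property (1) and the one-sided Lipschitz bound in (2) that \Cref{IntroTreeApprox} enjoys.

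Concretely, I would apply the construction of \Cref{IntroQuasiIsometry2} with the prescribed basepoint $x_0$ and with the collection $\{[x_0,z]:z\in Z\}$ as its input data. This yields an $\mathbb{R}$-tree $T$ together with a map $f:Y\to T$ obtained by folding pairs of geodesic segments together up to the lengths prescribed by their Gromov products at $x_0$. Because the identifications respect the arc-length parameter along each individual geodesic (no segment is ever folded into itself), the restriction of $f$ to each $[x_0,z]$ is an isometry, which is property (1).

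For property (2), the upper bound $d^{*}(f(x),f(y))\leqslant d(x,y)$ falls out of the folding construction: the quotient map is $1$-Lipschitz, since the only identifications made are between points that already coincide under the Gromov-product pairing. The lower bound $d(x,y)-C\leqslant d^{*}(f(x),f(y))$ is exactly the quasi-isometry estimate supplied by \Cref{IntroQuasiIsometry2}, which is uniform over $X$ and therefore over the subset $Y$. So once the construction of \Cref{IntroQuasiIsometry2} is in hand, nothing further is needed in the estimates.

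The main obstacle, which is really inherited from the proof of \Cref{IntroQuasiIsometry2} rather than introduced here, is that the constant $C$ in the lower bound must not blow up as $|Z|$ grows, in contrast to the logarithmic error $2\delta(\log_{2}(n)+1)$ in \Cref{IntroTreeApprox}. This is exactly where the equivalent characterisation from \Cref{IntroConstant} does the work: the multi-point Gromov-product inequality $(x_{1},x_{n})_{x_{0}}\geqslant\min_{i}(x_{i},x_{i+1})_{x_{0}}-A$ holds with a single additive constant $A$ independent of $n$, so iterating the folding identifications along an arbitrarily long chain accumulates no logarithmic factor and the resulting $C$ depends only on the quasi-tree constants of $X$.
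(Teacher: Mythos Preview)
Your proposal is correct and matches the paper's approach: the paper takes the end-approximating tree $T_X$ and map $f:X\to T_X$ from \Cref{IntroQuasiIsometry2} (built with the given basepoint $x_0$), restricts $f$ to $Y$, cites the proof of \Cref{Tree} for the isometry-on-segments property (1), and cites \Cref{QuasiIsometry} for the two-sided bound (2) with $C=2(\Delta+2\delta)$. The only cosmetic difference is that the paper constructs the tree from all of $X$ rather than from the subfamily $\{[x_0,z]:z\in Z\}$, but since the map is then restricted to $Y$ this is immaterial.
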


It is shown in \cite{Kerr2021a} that this improvement can be applied to Delzant and Steenbock's work \cite{Delzant2020} to produce a new result regarding product set growth in groups acting acylindrically on quasi-trees, with a particular application to right-angled Artin groups.

We note here that \Cref{IntroUniform} applies to subsets of any cardinality, rather than just finite subsets. On the other hand, it turns out that simply having uniform tree approximation for finite subsets is enough to ensure that the space in question is a quasi-tree, so quasi-trees are exactly the geodesic spaces for which uniform tree approximation is possible. In fact, even having uniform tree approximation for finite subsets is a stronger requirement than is necessary, as we still get a quasi-tree even if we relax the requirements on our tree approximation to allow for a multiplicative error.

\begin{prop}[\Cref{FiniteAprrox}]
	\label{IntroFiniteAprrox}
	Let $(X,d)$ be a geodesic metric space. The following are equivalent:
	\begin{enumerate}[(1)]
		\item $X$ is a quasi-tree.
		\item There exists a constant $C\geqslant 0$ such that for every finite subset $Z$ of $X$, there exist an $\mathbb{R}$-tree $(T,d^*)$ and a $(1,C)$-quasi-isometric embedding $f:(Z,d)\to (T,d^*)$.
		\item There exist constants $L\geqslant 1$ and $C\geqslant 0$ such that for every finite subset $Z$ of $X$, there exist an $\mathbb{R}$-tree $(T,d^*)$ and an $(L,C)$-quasi-isometric embedding $f:(Z,d)\to (T,d^*)$. 
	\end{enumerate}
\end{prop}

The remainder of the paper will mainly be concerned with the construction of the tree used in \Cref{IntroQuasiIsometry2}, and the ways in which its geometry relates to the original space. This tree can be constructed for any geodesic space, and can be thought of as collapsing the space along spheres around a basepoint, where two points will collapse together if they can be joined by a path that stays outside the central ball.

This construction can alternatively be visualised as collapsing the spaces along its ends, and we can show that the space of ends of any proper geodesic space will indeed be homeomorphic to the space of ends of the constructed tree (see \Cref{EndsHomeomorphic}). For this reason, we will often refer to such a tree as the \emph{end-approximating tree}. As a consequence of \Cref{EndsHomeomorphic}, we can also show that the space of ends of any proper geodesic space will in fact be homeomorphic to the space of ends of a locally finite simplicial tree (see \Cref{LocallyFiniteEnds}).

The boundary of the end-approximating tree can be compared to the boundary of the original space. For a hyperbolic space $X$, there exists a family of metrics on the boundary $\partial X$ known as \emph{visual metrics}, and given a tree $T$, and a choice of parameter, the visual metrics with that parameter on $\partial T$ are all bi-Lipschitz equivalent to a visual metric that can be written down explicitly in a standard way. It is known that a $(1,C)$-quasi-isometry between two hyperbolic geodesic spaces induces a bi-Lipschitz map between their boundaries \cite{Bonk2000}, and more specifically it is possible to choose a visual metric such that this map is an isometry. \Cref{IntroQuasiIsometry2} can therefore be used to show the following.

\begin{cor}[\Cref{SequentialRealv2}]
	\label{IntroSequentialRealv2}
	Let $X$ be a quasi-tree. There exist an $\mathbb{R}$-tree $T$ and a visual metric on $\partial X$ such that $\partial X$ endowed with this metric is isometric to $\partial T$ endowed with its standard visual metric, given a choice of basepoint and visual parameter.
\end{cor}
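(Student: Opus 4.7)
The plan is to apply \Cref{IntroQuasiIsometry2} and then invoke standard boundary theory for $(1,C)$-quasi-isometries of hyperbolic spaces.

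First I would use \Cref{IntroQuasiIsometry2} to obtain an $\mathbb{R}$-tree $T$ and a $(1,C)$-quasi-isometry $\phi: X \to T$. Since $X$ is hyperbolic (as a quasi-tree) and $T$ is $0$-hyperbolic, $\phi$ induces a boundary homeomorphism $\partial\phi:\partial X\to\partial T$. The key input, which is cited from \cite{Bonk2000}, is that a $(1,C)$-quasi-isometry between Gromov hyperbolic geodesic spaces induces a map between boundaries that is bi-Lipschitz with respect to any pair of compatible visual metrics, and moreover one can arrange for the induced map to be an isometry by choosing the visual metrics carefully (the distortion of the Gromov product is bounded by a constant depending only on $C$, which can be absorbed into a judicious choice of visual parameter and metric).

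Next I would fix a basepoint $x_0 \in X$ and set $t_0 = \phi(x_0) \in T$. Choose a visual parameter $\epsilon > 0$ sufficiently small (so that the standard visual construction yields an honest metric on both boundaries). Equip $\partial T$ with its standard visual metric $\rho_T$ based at $t_0$ with parameter $\epsilon$; on an $\mathbb{R}$-tree this is just $\rho_T(\xi,\eta) = e^{-\epsilon(\xi,\eta)_{t_0}}$, which is genuinely a metric since the Gromov product on a tree satisfies the ultrametric inequality exactly. Define the candidate metric $\rho_X$ on $\partial X$ by pulling back: $\rho_X(\xi,\eta) := \rho_T(\partial\phi(\xi),\partial\phi(\eta))$. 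By construction, $\partial\phi$ is then an isometry $(\partial X, \rho_X) \to (\partial T, \rho_T)$.

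The remaining task, which is the technical heart of the argument, is to verify that $\rho_X$ is actually a visual metric on $\partial X$ with basepoint $x_0$ and parameter $\epsilon$, i.e., that $\rho_X(\xi,\eta) \asymp e^{-\epsilon(\xi,\eta)_{x_0}}$ up to a multiplicative constant. For this I would use that a $(1,C)$-quasi-isometry distorts Gromov products by an additive constant depending only on $C$ and the hyperbolicity constants: concretely, $|(\phi(x),\phi(y))_{\phi(x_0)} - (x,y)_{x_0}| \leq C'$ for some $C' = C'(C,\delta)$, and this passes to sequences converging to points of the boundary in the standard way. This gives $\rho_X(\xi,\eta) \asymp e^{-\epsilon(\xi,\eta)_{x_0}}$, confirming that $\rho_X$ lies in the visual gauge on $\partial X$.

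The main obstacle is this final verification, since ``visual metric'' means a metric in the visual gauge and one must check the multiplicative comparison is preserved under the pullback. The subtlety is ensuring $\rho_X$ really is a metric (not just a quasimetric) for the chosen $\epsilon$, which requires $\epsilon$ to be small enough relative to $\delta_X$; this is exactly the usual constraint in the Bonk--Schramm construction and should be handled by taking $\epsilon \leq \epsilon_0(\delta_X)$ at the start.
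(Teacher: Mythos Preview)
Your approach is essentially the same as the paper's: take the $(1,C)$-quasi-isometry $X\to T_X$ from \Cref{QuasiIsometry} (which is the content of \Cref{IntroQuasiIsometry2}), invoke the Bonk--Schramm result that the induced boundary map is bi-Lipschitz, and pull back the standard visual metric on $\partial T_X$ to obtain a visual metric on $\partial X$ for which $\partial\phi$ is an isometry.

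One correction, though: your final ``obstacle'' is not an obstacle at all, and your caution about the visual parameter is misplaced. Since $\rho_X$ is defined as the pullback of an honest metric $\rho_T$ under the bijection $\partial\phi$, it is automatically a genuine metric on $\partial X$; there is no quasimetric-to-metric step here, and hence no constraint of the form $\epsilon\leqslant\epsilon_0(\delta_X)$. The usual smallness requirement arises only when one tries to build a visual metric directly on $\partial X$ via the chain construction, which you are sidestepping entirely. This is why the paper's version (\Cref{SequentialRealv2}) is stated for \emph{every} visual parameter $a>1$: the standard metric on $\partial T_X$ exists for all $a>1$ since $T_X$ is a tree, and its pullback is then a visual metric on $\partial X$ with that same parameter, regardless of the hyperbolicity constant of $X$.
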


In particular, the visual metric that we choose on $\partial X$ can be defined directly from the geometry of $X$, and in the case where $X$ is an $\mathbb{R}$-tree it will be the standard visual metric on the boundary. This means that this metric can be viewed as an extension of the standard visual metric for the boundary of an $\mathbb{R}$-tree, with respect to a choice of basepoint and parameter.

\textbf{Structure of the paper:} In Section 2, we recall some basic facts about quasi-trees and hyperbolic geometry. In Section 3, we give the construction of our end-approximating tree for a geodesic metric space. In Section 4, we prove that the end-approximating tree for a quasi-tree is $(1,C)$-quasi-isometric to the original space, and then use this to show that quasi-trees are precisely those spaces that are $(1,C)$-quasi-isometric to some simplicial tree. We additionally consider the particular cases where the simplicial tree in question is countable, or locally finite. In Section 5, we prove the uniform version of Gromov's tree approximation lemma for quasi-trees. We also consider an alternative version of tree approximation, and show that this turns out to not be uniform for quasi-trees. In Section 6, we give an alternative description of the end-approximating tree, and use it to discuss the relationship between the ends of the tree and the ends of the space it is constructed from. In the case where this original space is a quasi-tree, we compare its boundary with the boundary of the tree.

\textbf{Acknowledgements:} The author would like to thank Cornelia Dru\c{t}u for asking whether tree approximation is uniform in quasi-trees, and for many other useful questions and suggestions along the way. The author is also grateful to Panos Papasoglou and David Hume for asking if a quasi-tree is $(1,C)$-quasi-isometric to a tree, and for providing feedback on an earlier version of this paper. Thank you also to the referee of this paper, and to the author's thesis examiners, Emmanuel Breuillard and Thomas Delzant, for several helpful comments and corrections.

\section{Preliminaries}

We first recall some basic definitions and lemmas. Those familiar with trees and quasi-trees may wish to go straight to Section 3, and use this section as a reference only.

\begin{note}
	Let $(X,d)$ be a metric space. Let $x_0\in X$ and $r\geqslant 0$. We will use $B(x_0,r)$ to denote the closed ball of radius $r$ around $x_0$, and $S(x_0,r)$ to denote the sphere of radius $r$ around $x_0$.
\end{note}

\begin{defn}
	Let $(X,d)$ be a metric space. Let $x_0,x,y\in X$. The \emph{Gromov product} of $x$ and $y$ at $x_0$ is
	\begin{equation*}
	(x,y)_{x_0}=\frac{1}{2}(d(x_0,x)+d(x_0,y)-d(x,y)).
	\end{equation*}
\end{defn}

Clearly, $(x,y)_{x_0}=(y,x)_{x_0}$, $(x_0,x)_{x_0}=0$, and $(x,x)_{x_0}=d(x_0,x)$. By the triangle inequality, $(x,y)_{x_0}\geqslant 0$.

\begin{defn}
	\label{HypDef1}
	Let $(X,d)$ be a metric space. Suppose there exists $\delta\geqslant 0$ such that for every $x_0,x,y,z\in X$, we have that
	\begin{equation*}
	(x,z)_{x_0}\geqslant\min\{(x,y)_{x_0},(y,z)_{x_0}\}-\delta
	\end{equation*}
	Then we say that $X$ is \emph{$\delta$-hyperbolic}. We say that $X$ is \emph{hyperbolic} if it is $\delta$-hyperbolic for some $\delta\geqslant 0$.
\end{defn}

The logarithm in the lower bound of Gromov's tree approximation lemma (\Cref{IntroTreeApprox}) comes from the following statement, which can be proved by induction directly from the above definition.

\begin{lem}
	\label{HyperbolicInequality}
	\emph{\cite[p.~155]{Gromov1987}}
	A metric space $X$ is $\delta$-hyperbolic in the sense of \Cref{HypDef1} if and only if any $x_0,x_1,\ldots,x_n\in X$ with $n\leqslant 2^k+1$ satisfy
	\begin{equation*}
	(x_1,x_n)_{x_0}\geqslant\min_{1\leqslant i\leqslant n-1}(x_i,x_{i+1})_{x_0}-k\delta.
	\end{equation*}
\end{lem}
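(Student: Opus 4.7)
The plan is a standard induction on $k$, treating the two directions separately.

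For the reverse direction, taking $k=1$ and $n=3$ recovers exactly the four-point condition in \Cref{HypDef1} (after relabelling $x_1, x_2, x_3$ as $x, y, z$), so this direction is immediate.

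For the forward direction I would induct on $k$. The base case is $k=1$, where $n \leqslant 3$: when $n \leqslant 2$ the inequality is trivial (either both sides agree or the min is vacuous), and when $n = 3$ it is exactly the hyperbolicity axiom. For the inductive step, assume the statement holds for $k$, and let $x_0, x_1, \ldots, x_n$ with $n \leqslant 2^{k+1}+1$. Choose a splitting index $m$ with $1 \leqslant m \leqslant n$ such that both subsequences $x_1, \ldots, x_m$ and $x_m, \ldots, x_n$ contain at most $2^k + 1$ points; taking $m = \min\{2^k+1, n\}$ works since $n - m + 1 \leqslant 2^{k+1}+1 - (2^k+1) + 1 = 2^k + 1$. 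By the inductive hypothesis applied to each half,
\begin{equation*}
	(x_1, x_m)_{x_0} \geqslant \min_{1 \leqslant i \leqslant m-1}(x_i, x_{i+1})_{x_0} - k\delta,
\end{equation*}
\begin{equation*}
	(x_m, x_n)_{x_0} \geqslant \min_{m \leqslant i \leqslant n-1}(x_i, x_{i+1})_{x_0} - k\delta.
\end{equation*}
Applying the four-point hyperbolicity condition to the triple $x_1, x_m, x_n$ gives $(x_1, x_n)_{x_0} \geqslant \min\{(x_1, x_m)_{x_0}, (x_m, x_n)_{x_0}\} - \delta$, and combining with the two bounds above yields $(x_1, x_n)_{x_0} \geqslant \min_{1 \leqslant i \leqslant n-1}(x_i, x_{i+1})_{x_0} - (k+1)\delta$, completing the induction.

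There is no real obstacle here; the only thing to be careful about is the bookkeeping in the split, namely checking that both halves fit into the inductive hypothesis for $k$. This is precisely why the bound $n \leqslant 2^k + 1$ (rather than $2^k$ or $2^k - 1$) is the natural one: the shared endpoint $x_m$ is counted in both subsequences, and the ``$+1$'' absorbs this overlap, making the halving work cleanly at each step.
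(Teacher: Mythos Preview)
Your argument is correct and follows precisely the approach the paper indicates: the paper does not spell out a proof but simply notes that the statement ``can be proved by induction directly from the above definition,'' which is exactly the split-and-recurse argument you give. The bookkeeping on the splitting index is handled correctly.
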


In geodesic metric spaces, there is a commonly used equivalent definition that is more intuitive.

\begin{note}
	For $x,y$ in a geodesic metric space, we will use $[x,y]$ to represent any geodesic from $x$ to $y$.
\end{note}

\begin{defn}
	Let $(X,d)$ be a geodesic metric space. We say that $X$ is \emph{$\delta$-hyperbolic} for some $\delta\geqslant 0$ if for every $x,y,z\in X$, any choice of geodesic triangle $[x,y]\cup[y,z]\cup[z,x]$ is $\delta$-slim, meaning that if $p\in[x,y]$, then there exists $q\in[y,z]\cup[z,x]$ such that $d(p,q)\leqslant \delta$.
\end{defn}

\begin{rem}
	The $\delta$ in each definition is not generally the same. The exception is when a geodesic metric space is 0-hyperbolic, as in this case it will be 0-hyperbolic under both definitions.
\end{rem}

\begin{con}
	We will be mainly working in geodesic metric spaces, so we will take the second definition as standard when we say $\delta$-hyperbolic, unless otherwise specified.
\end{con}

In a geodesic hyperbolic space, the Gromov product of $x$ and $y$ at $x_0$ is approximately the distance between $x_0$ and $[x,y]$, as shown in the following standard lemmas.

\begin{lem}
	\label{Aux1}
	Let $(X,d)$ be a $\delta$-hyperbolic geodesic metric space, and let $x_0,x,y\in X$. Then there exists $z\in[x,y]$ such that $d(x_0,z)-2\delta\leqslant (x,y)_{x_0}$.
\end{lem}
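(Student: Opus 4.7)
The plan is to select $z$ to be the ``tripod point'' on $[x,y]$, that is, the point where the geodesic triangle $\Delta = [x_0,x]\cup[x,y]\cup[y,x_0]$ collapses to the centre of its approximating tripod, and then verify the bound via one application of $\delta$-slimness plus the triangle inequality.

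First I would set $c = (x,y)_{x_0}$ and record the elementary tripod identities $d(x_0,x)-c = (x_0,y)_x$ and $d(x_0,y)-c = (x_0,x)_y$, both of which are nonnegative. I then define $z \in [x,y]$ to be the unique point with $d(x,z) = (x_0,y)_x$, so that simultaneously $d(y,z) = (x_0,x)_y$. The goal is to show $d(x_0,z) \leqslant c + 2\delta$.

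Next I would apply $\delta$-slimness of $\Delta$ to the point $z$: there exists $w$ on one of the other two sides $[x_0,x]$ or $[x_0,y]$ with $d(z,w) \leqslant \delta$. By symmetry in $x$ and $y$, suppose $w \in [x_0,x]$. Then $d(x,w) \geqslant d(x,z) - d(z,w) \geqslant (x_0,y)_x - \delta$, and since $w$ lies on the geodesic $[x_0,x]$, this gives
\begin{equation*}
	d(x_0,w) = d(x_0,x) - d(x,w) \leqslant d(x_0,x) - (x_0,y)_x + \delta = c + \delta.
\end{equation*}
Combining with $d(z,w) \leqslant \delta$ yields $d(x_0,z) \leqslant c + 2\delta$, as required. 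The case $w \in [x_0,y]$ is identical after swapping the roles of $x$ and $y$ and using $d(y,z) = (x_0,x)_y$.

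There is essentially no serious obstacle here; the only subtlety is choosing the right test point $z$ rather than, for example, the internal point on $[x_0,x]$ (which would force one to chase slimness through two sides and would produce the weaker constant $4\delta$ typical of the ``insize'' of a slim triangle). Picking $z$ on $[x,y]$ directly, so that slimness immediately relates it to a side containing $x_0$, is what buys the sharp $2\delta$ bound in a single step.
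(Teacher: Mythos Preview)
Your proof is correct and is precisely the standard argument for this lemma. The paper itself does not supply a proof of \Cref{Aux1}; it merely states it as one of the ``standard lemmas'' relating the Gromov product to the distance from $x_0$ to $[x,y]$, so there is nothing to compare against beyond noting that your approach is the expected one.
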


\begin{lem}
	\label{Aux2}
	Let $(X,d)$ be a geodesic metric space, and let $x_0,x,y\in X$. Then $(x,y)_{x_0}\leqslant d(x_0,[x,y])$.
\end{lem}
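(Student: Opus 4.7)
The plan is to pick an arbitrary point $z$ on a geodesic $[x,y]$, bound $(x,y)_{x_0}$ by $d(x_0,z)$ using the triangle inequality, and then take the infimum over $z\in[x,y]$.

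More concretely, the key observation is that any $z\in[x,y]$ satisfies $d(x,z)+d(z,y)=d(x,y)$, since $z$ lies on a geodesic between $x$ and $y$. Combined with the two triangle inequalities $d(x_0,x)\leqslant d(x_0,z)+d(z,x)$ and $d(x_0,y)\leqslant d(x_0,z)+d(z,y)$, adding these yields
\begin{equation*}
d(x_0,x)+d(x_0,y)\leqslant 2d(x_0,z)+d(x,y),
\end{equation*}
which rearranges directly to $(x,y)_{x_0}\leqslant d(x_0,z)$ by the definition of the Gromov product.

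Since this bound holds for every $z\in[x,y]$, taking the infimum over such $z$ gives $(x,y)_{x_0}\leqslant d(x_0,[x,y])$, as required. There is no real obstacle here: the statement is a one-line consequence of the fact that geodesic parametrisations are additive along the geodesic, and no hyperbolicity assumption is used (contrast with \Cref{Aux1}, which requires the $2\delta$ slack).
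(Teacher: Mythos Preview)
Your proof is correct and is precisely the standard argument for this fact. The paper itself does not supply a proof of \Cref{Aux2}; it simply records it (together with \Cref{Aux1}) as one of the ``standard lemmas'' about the Gromov product in geodesic spaces, so there is nothing further to compare.
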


\begin{defn}
	A metric space $(X,d)$ is an \emph{$\mathbb{R}$-tree} if for every $x,y\in X$, there exists a unique topological embedding $\alpha:[0,r]\to X$, and this embedding is a geodesic, so $d(x,y)=r$.
\end{defn}

The following equivalent characterisation is standard, see for example \cite{Bestvina2001}.

\begin{lem}
	A metric space is an $\mathbb{R}$-tree if and only if it is 0-hyperbolic and geodesic.
\end{lem}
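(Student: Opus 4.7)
The plan is to prove the two implications separately. The forward direction is short; the reverse takes more work.

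For the forward direction, an $\mathbb{R}$-tree is geodesic by definition, so only $0$-hyperbolicity requires verification, which I would do by showing geodesic triangles are $0$-slim. Given a triangle on vertices $x,y,z$ and a point $p\in[x,y]$, I would form the path $\sigma$ from $x$ to $y$ obtained by concatenating the geodesics $[x,z]$ and $[z,y]$. By the standard topological fact that any path in a Hausdorff space contains a topological arc between its endpoints, $\sigma$ contains an arc $\beta$ from $x$ to $y$ whose image lies inside $[x,z]\cup[z,y]$. By the uniqueness clause in the definition of $\mathbb{R}$-tree, $\beta$ coincides (up to reparametrization) with $[x,y]$, so $[x,y]\subset[x,z]\cup[z,y]$; in particular $p$ lies on one of the other two sides.

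For the reverse direction, I would first establish uniqueness of geodesics. Given two geodesics $[x,y]_1,[x,y]_2$ and any $p\in[x,y]_1$, applying $0$-slimness to the degenerate triangle with sides $[x,y]_2,[x,p]_1,[p,y]_1$ places every point of $[x,y]_2$ on $[x,y]_1$; both geodesics are then isometric parametrizations of the same set and so agree. The substantive step is to show that any topological embedding $\alpha\colon[0,r]\to X$ from $x$ to $y$ has image equal to this unique geodesic $[x,y]$. Suppose, for contradiction, that some $q\in[x,y]$ (necessarily interior) does not lie on $\alpha$. Since $q\in[x,y]$ gives $(x,y)_q=0$, the Gromov-product form of $0$-hyperbolicity applied to $x,\alpha(t),y$ at $q$ yields $[0,r]=A\cup B$, where $A=\{t:q\in[x,\alpha(t)]\}$ and $B=\{t:q\in[\alpha(t),y]\}$ are closed as zero sets of continuous Gromov-product functions, with $0\in B\setminus A$ and $r\in A\setminus B$. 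Setting $t_1=\inf A$ gives $t_1\in A\cap B$, so $z_1:=\alpha(t_1)\neq q$ satisfies $q\in[x,z_1]\cap[z_1,y]$.

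Iterating this decomposition on the sub-arc $\alpha|_{[0,t_1]}$ from $x$ to $z_1$ (using $q\in[x,z_1]$) produces a strictly decreasing sequence $(t_n)$ with $z_n=\alpha(t_n)$ satisfying $q\in[x,z_n]$ and $q\in[z_n,z_{n-1}]$. The second containment rewrites as $d(q,z_n)=d(z_{n-1},z_n)-d(z_{n-1},q)$; combined with $d(z_{n-1},z_n)\to 0$ (by continuity of $\alpha$ at $t^\infty:=\lim t_n$), this forces $z_n\to q$ and hence $\alpha(t^\infty)=q$, contradicting $q\notin\alpha$. Therefore $[x,y]\subset\mathrm{Im}(\alpha)$, and since $\mathrm{Im}(\alpha)$ is a topological arc whose image contains the sub-arc $[x,y]$ joining the same endpoints, the two images must coincide, so $\alpha$ is (up to reparametrization) the geodesic.

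The main obstacle is the arc-uniqueness step in the reverse direction. The central device is translating the topological obstruction $q\notin\alpha$ into the Gromov-product identity $(x,y)_q=0$, which via $0$-hyperbolicity cleanly partitions the parameter interval into the closed sets $A$ and $B$, circumventing any need for nearest-point projection theory. The careful points are verifying that $t_{n+1}<t_n$ is strict (since $t_{n+1}=t_n$ would force $q=z_n$, contradicting $q\notin\alpha$) and that the limit analysis indeed forces $\alpha(t^\infty)=q$.
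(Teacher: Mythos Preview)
The paper does not prove this lemma; it is recorded as a standard characterisation with a citation to \cite{Bestvina2001}, so there is no paper proof to compare against. Your argument is correct and complete.

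One minor observation: in the reverse direction you obtain the covering $[0,r]=A\cup B$ via the Gromov-product inequality, but it follows equally directly from $0$-slimness of the geodesic triangle on $x,\alpha(t),y$ (the point $q\in[x,y]$ must lie on one of the other two sides). Doing it that way keeps the entire proof within the slim-triangles definition and avoids silently invoking the equivalence of the two $0$-hyperbolicity definitions stated in the paper's remark. The iteration is sound---strict decrease holds because $t_{n+1}=t_n$ would place $t_n$ in both $A'$ and $B'$, forcing $q=z_n\in\mathrm{Im}(\alpha)$---and the limit step is clean since $d(q,z_n)+d(q,z_{n-1})=d(z_n,z_{n-1})\to 0$ with both summands non-negative. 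The concluding containment argument is also fine: $\alpha^{-1}([x,y])$ is a connected subset of $[0,r]$ containing both endpoints, hence equals $[0,r]$.
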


\begin{rem}
	One nice consequence of these definitions is that in an $\mathbb{R}$-tree the Gromov product $(x,y)_{x_0}$ gives us exactly the distance between $x_0$ and the unique geodesic $[x,y]$. This is also the exact distance for which $[x_0,x]$ and $[x_0,y]$ coincide.
\end{rem}

\begin{defn}
	A \emph{simplicial tree} is a 1-dimensional simplicial complex that is an $\mathbb{R}$-tree under the path metric induced by considering each edge to be isometric to $[0,1]$.
\end{defn}

\begin{defn}
	Let $(X,d_X)$ and $(Y,d_Y)$ be metric spaces. A map $f:X\to Y$ is an $(L,C)$-\emph{quasi-isometry} if there exist constants $L\geqslant 1$, $C\geqslant 0$ such that:
	\begin{enumerate}[(1)]
		\item For every $x_1,x_2\in X$, we have that
		\begin{equation*}
		\frac{1}{L}d_X(x_1,x_2)-C\leqslant d_Y(f(x_1),f(x_2))\leqslant Ld_X(x_1,x_2)+C.
		\end{equation*}
		\item For every $y\in Y$, there exists $x\in X$ such that $d_Y(f(x),y)\leqslant C$.
	\end{enumerate}
	These two properties can be described as being coarsely bi-Lipschitz and coarsely surjective, respectively. If only the first property is satisfied, then the map is an $(L,C)$-\emph{quasi-isometric embedding}.
\end{defn}

\begin{rem}
	A $(1,C)$-quasi-isometry is sometimes known as a \emph{rough isometry}, amongst other terms.
\end{rem}

\begin{defn}
	Let $X$ and $Y$ be metric spaces. We say that $X$ is $(L,C)$-\emph{quasi-isometric} to $Y$ for some constants $L\geqslant 1,C\geqslant 0$ if there exists an $(L,C)$-quasi-isometry $f:X\to Y$. When the constants $L$ and $C$ are not specified, we will refer to $f$ simply as a quasi-isometry, and say that $X$ and $Y$ are quasi-isometric.
\end{defn}

\begin{rem}
	\label{QuasiInverse}
	If there exists an $(L,C)$-quasi-isometry $f:X\to Y$ for some $L\geqslant 1,C\geqslant 0$, then there exists an $(L',C')$-quasi-isometry $g:Y\to X$ for some $L'\geqslant 1,C'\geqslant 0$. In particular, if there exists a $(1,C)$-quasi-isometry $f:X\to Y$ for some $C\geqslant 0$, then there exists a $(1,3C)$-quasi-isometry $g:Y\to X$. When $L$ and $C$ are not given explicitly, we will therefore assume that they are large enough to hold in both directions. 
\end{rem}

Another fact which we will use about quasi-isometries is the following well-known lemma.

\begin{lem}
	\label{QuasiTransitive}
	Let $X$, $Y$, and $Z$ be metric spaces. Suppose there exist an $(1,C)$-quasi-isometry $f:X\to Y$, and a $(1,C')$-quasi-isometry $g:Y\to Z$, for some $C,C'\geqslant 0$. Then $g\circ f$ is a $(1,C+2C')$-quasi-isometry.
\end{lem}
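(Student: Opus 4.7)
The plan is to verify the two defining conditions of a $(1,C+2C')$-quasi-isometry for the composition $g\circ f$ separately, in each case chaining the corresponding condition for $f$ with that for $g$.

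For the distance condition, I would fix $x_1,x_2\in X$ and first apply the $(1,C')$-bound for $g$ to the pair $f(x_1),f(x_2)\in Y$, obtaining $|d_Z(g(f(x_1)),g(f(x_2)))-d_Y(f(x_1),f(x_2))|\leqslant C'$. Substituting the $(1,C)$-bound for $f$ on the right and using the triangle inequality for the absolute values, the errors add to give $|d_Z(g(f(x_1)),g(f(x_2)))-d_X(x_1,x_2)|\leqslant C+C'$, which already lies within the target $C+2C'$.

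For the coarse surjectivity condition, given $z\in Z$ I would first use coarse surjectivity of $g$ to pick $y\in Y$ with $d_Z(g(y),z)\leqslant C'$, and then use coarse surjectivity of $f$ to pick $x\in X$ with $d_Y(f(x),y)\leqslant C$. Applying the Lipschitz-type bound for $g$ to the pair $f(x),y$ gives $d_Z(g(f(x)),g(y))\leqslant d_Y(f(x),y)+C'\leqslant C+C'$, and the triangle inequality then yields $d_Z(g(f(x)),z)\leqslant C+2C'$.

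There is no genuine obstacle here; the argument is a direct bookkeeping exercise. The only subtle point worth flagging is that the constant $C'$ from $g$ enters twice in the coarse surjectivity step — once through the Lipschitz-type bound applied to a pair of points that are already only $C$-close, and once through the initial choice of $y$ approximating $z$ — which is precisely why the stated constant must be $C+2C'$ rather than the smaller $C+C'$ sufficient for the distance estimate alone.
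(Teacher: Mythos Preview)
Your argument is correct; the paper itself states this lemma as well-known and does not supply a proof, so there is nothing to compare against. The computation you give is the standard one, and your remark about why $C+2C'$ rather than $C+C'$ is needed is exactly the point.
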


\begin{defn}
	A \emph{quasi-tree} is a geodesic metric space that is quasi-isometric to a simplicial tree.
\end{defn}

Any quasi-tree will automatically be $\delta$-hyperbolic for some $\delta\geqslant 0$, as hyperbolicity is a quasi-isometry invariant. A highly useful alternative characterisation of quasi-trees was given by Manning.

\begin{thm}[Manning's bottleneck criterion]
	\label{BottleneckPrime}
	\emph{\cite{Manning2005}}
	A geodesic metric space $(X,d)$ is a quasi-tree if and only if there exists $\Delta\geqslant 0$ (the \emph{bottleneck constant}) such that for every  geodesic $[x,y]$ in $X$ with midpoint $m$, every path between $x$ and $y$ intersects the closed ball $B(m,\Delta)$.
\end{thm}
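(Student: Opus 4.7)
The plan is to prove the two directions separately, with the reverse implication being substantially more delicate.

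For the forward direction, suppose $\phi:X\to T$ is a quasi-isometry to a simplicial tree $T$ with constants $(L,C)$. Given a geodesic $[x,y]$ in $X$ with midpoint $m$ and an arbitrary path $\gamma$ from $x$ to $y$, I would discretise $\gamma$ as a sequence $x=z_0,z_1,\ldots,z_n=y$ with consecutive distances at most one, so that the images $\phi(z_i)$ have consecutive distances bounded by $L+C$. A standard property of trees says that any such coarsely continuous sequence from $\phi(x)$ to $\phi(y)$ must fellow-travel the unique geodesic $[\phi(x),\phi(y)]$, and hence some $\phi(z_i)$ lies within a uniform distance of its midpoint. Because $\phi$ sends geodesics to quasi-geodesics, $\phi(m)$ itself lies within bounded distance of this midpoint, and pulling back through a quasi-inverse of $\phi$ gives $d(z_i,m)\leqslant\Delta$ for some $\Delta$ depending only on $L$ and $C$.

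For the reverse direction I would construct a tree from the bottleneck property directly. Fix a basepoint $x_0\in X$. For each integer $n\geqslant 0$, partition $S(x_0,n)$ by declaring $p\sim_n q$ if $p$ and $q$ are joined by a path lying in $X\setminus B(x_0,n-\Delta')$ for a suitable $\Delta'$ depending on $\Delta$; morally this says that geodesics $[x_0,p]$ and $[x_0,q]$ asymptotically coincide as one approaches $x_0$. Form a graph $T$ whose vertices are the resulting equivalence classes across all $n$, with each level-$(n+1)$ class joined by an edge to the level-$n$ class obtained by stepping back one unit along a geodesic to $x_0$. The bottleneck hypothesis is precisely what makes this edge map well-defined and forces $T$ to be acyclic: any cycle would yield two essentially disjoint paths between some pair of points, and the bottleneck property applied to a geodesic between them would compel one path to enter a small ball centred at the other's midpoint, contradicting disjointness. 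Sending $x\in X$ to the class of its image under any geodesic projection to $S(x_0,\lfloor d(x_0,x)\rfloor)$ then produces the candidate quasi-isometry, which I would verify by computing distances in $T$ via Gromov products at $x_0$.

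The reverse direction is the main obstacle, and within it the crux is controlling, uniformly in $\Delta$, the level at which the classes of two points $x,y\in X$ merge. This requires first showing that the bottleneck hypothesis forces $X$ to be hyperbolic, so that Gromov products $(x,y)_{x_0}$ correspond up to additive error to the distance from $x_0$ to $[x,y]$. A further bottleneck argument applied to sub-geodesics near $x_0$ then shows that the merging level lies within a constant of $(x,y)_{x_0}$, and hence the tree distance $d(x_0,x)+d(x_0,y)-2(x,y)_{x_0}$ approximates $d(x,y)$ to within a multiple of $\Delta$. Careful bookkeeping of these additive errors is what upgrades a coarse Lipschitz surjection to an honest quasi-isometry.
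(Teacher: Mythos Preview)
The paper does not prove this theorem: it is stated in the preliminaries section with a citation to Manning's original paper \cite{Manning2005} and is used throughout as a black box. There is therefore no in-paper proof to compare your proposal against.

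That said, your outline is essentially Manning's original argument, and the reverse direction in particular is the construction the present paper explicitly borrows later (see the discussion before \Cref{Simplicial}, where the author says the simplicial tree ``comes from the technique Manning uses in the proof of the bottleneck criterion''). So while the paper does not reproduce the proof, your sphere-level equivalence-class tree is exactly the idea it alludes to. The forward direction you sketch is the standard one and is fine.

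One small comment on your reverse direction: you flag hyperbolicity of $X$ as a preliminary step needed to relate the merging level to the Gromov product. This is immediate from the bottleneck hypothesis itself, since for any geodesic triangle the union of two sides is a path between the endpoints of the third, and hence must pass within $\Delta$ of every point on that third side; so $X$ is $\Delta$-hyperbolic with no extra work (the paper records this as \Cref{DeltaHyperbolic}). With that in hand, your bookkeeping plan for the additive errors is the right shape, though in Manning's actual write-up the verification that the map to the tree is a quasi-isometry is done more directly via the bottleneck property rather than routing through Gromov products.
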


It is well known that this characterisation can be easily extended to the equivalent statement that every point on a geodesic in a quasi-tree has this property, not just the midpoint (see, for example, \cite{Bestvina2015}).

\begin{cor}
	\label{Bottleneck}
	A geodesic metric space $(X,d)$ is a quasi-tree if and only if there exists $\Delta\geqslant 0$ such that for every geodesic $[x,y]$ in $X$, and every $z\in[x,y]$, every path between $x$ and $y$ intersects the closed ball $B(z,\Delta)$.
\end{cor}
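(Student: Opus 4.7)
The backward direction is immediate: if the stronger bottleneck property holds for every point on every geodesic, then in particular it holds at midpoints, so Manning's criterion (\Cref{BottleneckPrime}) applies and $X$ is a quasi-tree. All the work is in the forward direction, where we must upgrade the midpoint-only property to a property holding at every point.

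The plan is to reduce an arbitrary point $z \in [x,y]$ to a midpoint of a suitable sub-geodesic and then quote \Cref{BottleneckPrime}. Given a geodesic $[x,y]$, a point $z \in [x,y]$, and a path $\gamma$ from $x$ to $y$, assume without loss of generality that $d(x,z) \leqslant d(z,y)$. Let $z'$ be the unique point on $[x,y]$ with $z$ between $x$ and $z'$ and $d(z,z') = d(x,z)$; then $z$ is the midpoint of the sub-geodesic $[x,z']$. Concatenate $\gamma$ with the reverse of $[z',y]$ to form a path $\gamma'$ from $x$ to $z'$, and apply \Cref{BottleneckPrime} with bottleneck constant $\Delta$ to the geodesic $[x,z']$: the path $\gamma'$ must intersect $B(z,\Delta)$.

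Now analyse where this intersection occurs. If a point of $\gamma'$ in $B(z,\Delta)$ lies on the $\gamma$ portion, we are done immediately. Otherwise the intersection point $p$ lies on $[z',y]$, hence on $[x,y]$ itself with $d(x,p) \geqslant d(x,z') = 2d(x,z)$. Since $z$ and $p$ both lie on the geodesic $[x,y]$, this gives $d(z,p) = d(x,p) - d(x,z) \geqslant d(x,z)$, and combining with $d(z,p) \leqslant \Delta$ yields $d(x,z) \leqslant \Delta$. Thus $x \in B(z,\Delta)$, and since $x$ is an endpoint of $\gamma$, again $\gamma$ meets $B(z,\Delta)$. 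So the same constant $\Delta$ from \Cref{BottleneckPrime} works.

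The only mild obstacle is to carry out the case analysis cleanly and to check that $z'$ actually exists as a point of $[x,y]$, which is guaranteed by the assumption $d(x,z) \leqslant d(z,y)$; otherwise one swaps the roles of $x$ and $y$. No additional hyperbolicity estimates are required, so the constant can be kept equal to the bottleneck constant of Manning's theorem.
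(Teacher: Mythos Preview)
Your argument is correct. The backward direction is immediate, and your forward direction---reflecting $x$ across $z$ to a point $z'\in[x,y]$, concatenating $\gamma$ with the sub-geodesic $[y,z']$, and applying \Cref{BottleneckPrime} to the midpoint $z$ of $[x,z']$---is a clean reduction. The dichotomy at the end is handled correctly: if the guaranteed point of $\gamma'$ in $B(z,\Delta)$ lands on the appended segment $[z',y]$, then $d(x,z)\leqslant\Delta$ forces $x$ itself into $B(z,\Delta)$, so $\gamma$ meets the ball either way with the \emph{same} constant $\Delta$.

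As for comparison with the paper: the paper does not actually supply a proof of this corollary. It records the extended bottleneck criterion as well known and refers the reader to \cite{Bestvina2015}. So there is no in-paper argument to compare against; your proof fills in precisely the short verification the paper omits.
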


\begin{rem}
	\label{DeltaHyperbolic}
	It follows from \Cref{Bottleneck} that a quasi-tree with bottleneck constant $\Delta\geqslant 0$ is $\Delta$-hyperbolic.
\end{rem}

\section{End-approximating tree for a geodesic metric space}

In this section, we will give a construction of an $\mathbb{R}$-tree from a general geodesic metric space, in such a way that some of the original structure is retained. The idea is based on Gromov's proof of the tree approximation lemma in \cite{Gromov1987}. A similar explanation of such a construction can be found in \cite{Coornaert1990}, however we state the results here with greater generality.

\begin{defn}
	Let $(X,d)$ be a metric space. For $x,y\in X$, we will use the notation $S_{x,y}$ to mean the set of all finite sequences in $X$ between $x$ and $y$, so
	\begin{equation*}
	S_{x,y}=\{(x_1,\ldots ,x_n):x_1,\ldots ,x_n\in X,x_1=x,x_n=y,n\in\mathbb{N}\}.
	\end{equation*}
	Fix $x_0\in X$. We define $(x,y)'_{x_0}$ to be
	\begin{equation*}
	(x,y)'_{x_0}=\sup_{S_{x,y}}\min_{1\leqslant i\leqslant n-1}(x_i,x_{i+1})_{x_0}.
	\end{equation*}
	We finally define $d'$ on $X$ as
	\begin{equation*}
	d'(x,y)=d(x_0,x)+d(x_0,y)-2(x,y)'_{x_0}.
	\end{equation*}
\end{defn}

We begin with a couple of preliminary lemmas.

\begin{lem}
	\label{PMLem1}
	Let $(X,d)$ be a metric space, and let $x_0,x,y\in X$. Then $(x,y)'_{x_0}\leqslant d(x_0,x)$.
\end{lem}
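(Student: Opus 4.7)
The plan is to reduce the claim to a well-known elementary bound on the ordinary Gromov product, namely that $(a,b)_{x_0} \leqslant \min\{d(x_0,a), d(x_0,b)\}$ for any points $a,b$ in a metric space. This follows immediately from the definition of the Gromov product together with the triangle inequality $d(a,b) \geqslant |d(x_0,a) - d(x_0,b)|$.

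Given this auxiliary bound, I would argue as follows. Take any finite sequence $x_1, \ldots, x_n \in S_{x,y}$; we may assume $n \geqslant 2$, since otherwise $x = y$ and the inequality reduces to $(x,x)'_{x_0} \leqslant d(x_0,x)$, which is handled by considering the constant length-2 sequence $(x,x)$ giving the value $d(x_0,x)$. Since $x_1 = x$, the first term in the minimum is
\begin{equation*}
(x_1, x_2)_{x_0} \;\leqslant\; d(x_0, x_1) \;=\; d(x_0, x),
\end{equation*}
using the auxiliary bound above. Therefore
\begin{equation*}
\min_{1 \leqslant i \leqslant n-1}(x_i, x_{i+1})_{x_0} \;\leqslant\; (x_1, x_2)_{x_0} \;\leqslant\; d(x_0, x).
\end{equation*}
Taking the supremum over all sequences in $S_{x,y}$ preserves this upper bound, yielding $(x,y)'_{x_0} \leqslant d(x_0,x)$.

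There is no real obstacle here; the proof is essentially a one-line observation once the auxiliary bound on Gromov products is recalled. The only thing to be a little careful about is the degenerate case $x=y$, which is resolved by choosing the two-element sequence $(x,x)$ (and noting that by symmetry in the definition one also obtains $(x,y)'_{x_0} \leqslant d(x_0,y)$ by the same argument applied to the last index of a sequence).
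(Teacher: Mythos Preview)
Your proof is correct and essentially identical to the paper's: both observe that the triangle inequality gives $(a,b)_{x_0}\leqslant d(x_0,a)$, apply this to the first pair $(x_1,x_2)=(x,x_2)$ of an arbitrary sequence in $S_{x,y}$ to bound the minimum, and then pass to the supremum. Your aside about the degenerate case $x=y$ is more than the paper bothers with; note, however, that exhibiting the length-2 sequence $(x,x)$ only shows $(x,x)'_{x_0}\geqslant d(x_0,x)$, not the upper bound you want---the real point is that sequences of length $1$ are implicitly excluded (or their empty minimum ignored) in the definition, which the paper simply takes for granted.
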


\begin{proof}
	For $x,y\in X$, we can see that the triangle inequality for $d$ implies that $(x,y)_{x_0}\leqslant d(x_0,x)$. Hence for any sequence $(x_1,\ldots,x_n)$ in $S_{x,y}$, we have that
	\begin{equation*}
	\min_{1\leqslant i\leqslant n-1}(x_i,x_{i+1})_{x_0}\leqslant (x_1,x_2)_{x_0}=(x,x_2)_{x_0}\leqslant d(x_0,x),
	\end{equation*}
	so $(x,y)'_{x_0}\leqslant d(x_0,x)$.
\end{proof}

\begin{lem}
	\label{PMLem2}
	Let $(X,d)$ be a metric space, and let $x_0,x,y\in X$. Then we have that $(x,z)'_{x_0}\geqslant\min\{(x,y)'_{x_0},(y,z)'_{x_0}\}$.
\end{lem}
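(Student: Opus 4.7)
The plan is to exploit the sup--min definition of $(\cdot,\cdot)'_{x_0}$ by concatenating near-optimal sequences. The key observation is that if we have a sequence in $S_{x,y}$ whose consecutive Gromov products all exceed some threshold, and another such sequence in $S_{y,z}$, then sticking them together end-to-end (which is legitimate since both involve the common endpoint $y$) gives a sequence in $S_{x,z}$ whose consecutive Gromov products are still controlled by the same threshold.

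More concretely, I would fix $\varepsilon>0$ and use the definition of the supremum to pick a sequence $x=x_1,\ldots,x_n=y$ in $S_{x,y}$ with
\begin{equation*}
\min_{1\leqslant i\leqslant n-1}(x_i,x_{i+1})_{x_0}\geqslant (x,y)'_{x_0}-\varepsilon,
\end{equation*}
and a sequence $y=y_1,\ldots,y_m=z$ in $S_{y,z}$ with the analogous property for $(y,z)'_{x_0}$. Then I would form the concatenated sequence $x_1,\ldots,x_n,y_2,\ldots,y_m$ (using $x_n=y_1=y$), which lies in $S_{x,z}$, and observe that its set of consecutive pairs is exactly the union of the consecutive pairs from the two original sequences. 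Consequently, the minimum of the consecutive Gromov products along the concatenation is at least $\min\{(x,y)'_{x_0},(y,z)'_{x_0}\}-\varepsilon$, so by definition of the supremum,
\begin{equation*}
(x,z)'_{x_0}\geqslant \min\{(x,y)'_{x_0},(y,z)'_{x_0}\}-\varepsilon.
\end{equation*}
Letting $\varepsilon\to 0$ gives the claim.

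There is essentially no serious obstacle here: the argument is the standard way of turning a sup-of-min definition into an ultrametric-style inequality. The only point that needs a moment of care is the bookkeeping at the junction, namely that the shared endpoint $y$ is not double-counted and that no new ``consecutive pair'' is introduced by the concatenation beyond the pairs already present in the two original sequences. Once that is noted, the argument is a one-line application of the definition.
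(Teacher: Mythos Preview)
Your argument is correct and is precisely the concatenation-of-sequences argument the paper has in mind; the paper's proof is the single line ``This follows by concatenating sequences,'' and you have simply spelled out the $\varepsilon$-bookkeeping that makes that line rigorous.
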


\begin{proof}
	This follows by concatenating sequences.
\end{proof}

We can use these lemmas to prove the following.

\begin{lem}
	\label{Pseudometric}
	The function $d':X\times X\to \mathbb{R}_{\geqslant 0}$ is a pseudometric on $X$, with the property that $d'\leqslant d$.
\end{lem}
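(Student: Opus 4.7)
The plan is to verify the four axioms of a pseudometric (non-negativity, $d'(x,x)=0$, symmetry, triangle inequality) plus the comparison $d' \leq d$, using the two preceding lemmas. Most of these will be one-line checks; the only slightly delicate point is the triangle inequality.

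First I would handle the easy items. Symmetry of $d'$ is immediate from the symmetry of the Gromov product and the fact that reversing a sequence in $S_{x,y}$ gives a sequence in $S_{y,x}$ with the same successive Gromov products. For $d'(x,x)$, the constant sequence $(x,x) \in S_{x,x}$ shows $(x,x)'_{x_0} \geq (x,x)_{x_0} = d(x_0,x)$, while \Cref{PMLem1} gives the reverse inequality, so $(x,x)'_{x_0} = d(x_0,x)$ and hence $d'(x,x)=0$. Non-negativity of $d'$ follows because \Cref{PMLem1} (applied in both arguments, using symmetry) gives
\begin{equation*}
    (x,y)'_{x_0} \leq \min\{d(x_0,x),\,d(x_0,y)\} \leq \tfrac{1}{2}\bigl(d(x_0,x)+d(x_0,y)\bigr).
\end{equation*}

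The comparison $d'(x,y) \leq d(x,y)$ reduces, after rearranging, to $(x,y)_{x_0} \leq (x,y)'_{x_0}$, which holds because the length-two sequence $(x,y) \in S_{x,y}$ is one of the sequences over which the supremum defining $(x,y)'_{x_0}$ is taken.

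The main step is the triangle inequality $d'(x,z) \leq d'(x,y) + d'(y,z)$, which after cancelling $d(x_0,x)+d(x_0,z)$ becomes
\begin{equation*}
    (x,y)'_{x_0} + (y,z)'_{x_0} \leq (x,z)'_{x_0} + d(x_0,y).
\end{equation*}
Here I would assume without loss of generality that $(x,y)'_{x_0} \leq (y,z)'_{x_0}$. Then \Cref{PMLem2} yields $(x,z)'_{x_0} \geq (x,y)'_{x_0}$, while \Cref{PMLem1} (used with the roles of the arguments swapped, via symmetry) gives $(y,z)'_{x_0} \leq d(x_0,y)$. Adding these two inequalities produces exactly the required bound. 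This is the only place where something more than a direct expansion is needed, and it is essentially the reason we recorded \Cref{PMLem1} and \Cref{PMLem2} in the form we did, so I do not anticipate any genuine obstacle.
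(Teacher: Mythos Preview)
Your proof is correct and follows essentially the same approach as the paper's: both reduce non-negativity to \Cref{PMLem1}, both obtain $d'\leqslant d$ from the length-two sequence $(x,y)\in S_{x,y}$, and both handle the triangle inequality by the same WLOG case split combined with \Cref{PMLem1} and \Cref{PMLem2}. The only cosmetic difference is that the paper derives $d'(x,x)=0$ from $d'\leqslant d$ rather than by computing $(x,x)'_{x_0}$ directly.
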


\begin{proof}
	Fix $x_0\in X$. We first need to check that $d'$ actually maps to $\mathbb{R}_{\geqslant 0}$. Let $x,y\in X$. \Cref{PMLem1} tells us that $(x,y)'_{x_0}\leqslant d(x_0,x)$, and similarly that $(x,y)'_{x_0}\leqslant d(x_0,y)$. Hence we have that $d'(x,y)\geqslant 0$. The fact that $d'(x,y)<\infty$ is clear.
	
	We now want to show that $d'(x,x)=0$ for any $x\in X$. Note that $(x,y)$ is a valid sequence in $S_{x,y}$ for any $x,y\in X$, so $(x,y)'_{x_0}\geqslant(x,y)_{x_0}$, and therefore $d'(x,y)\leqslant d(x,y)$. Hence
	\begin{equation*}
		0\leqslant d'(x,x)\leqslant d(x,x)=0.
	\end{equation*}
	
	It is clear from the symmetry of the Gromov product for $d$ that $(x,y)'_{x_0}=(y,x)'_{x_0}$ for any $x,y\in X$, and hence clear that $d'(x,y)=d'(y,x)$.
	
	Finally, let $x,y,z\in X$. We can see that
	\begin{align*}
	d'(x,z)\leqslant d'(x,y)+d'(y,z) & \iff -2(x,z)'_{x_0}\leqslant 2d(x_0,y)-2((x,y)'_{x_0}+(y,z)'_{x_0})
	\\ & \iff 0\leqslant d(x_0,y)+(x,z)'_{x_0}-(x,y)'_{x_0}-(y,z)'_{x_0}.
	\end{align*}
	\Cref{PMLem2} tells us that
	\begin{equation*}
	(x,z)'_{x_0}\geqslant\min\{(x,y)'_{x_0},(y,z)'_{x_0}\},
	\end{equation*}
	so suppose without loss of generality that $(x,z)'_{x_0}\geqslant(x,y)'_{x_0}$. Then as $d(x_0,y)\geqslant(y,z)'_{x_0}$, we have shown that $d'$ satisfies the triangle inequality. Therefore $d'$ is a pseudometric on $X$.
\end{proof}

It is not hard to see that this is not a metric in general, as we could have $x\neq y$ but $d'(x,y)=0$. One example would be when $X=\mathbb{R}^n$ for some $n\geqslant 2$, and $x$ and $y$ lie on the same sphere around $x_0$ (see \Cref{Paths}).

\begin{defn}
	\label{TX}
	Let $(X,d)$ be a metric space, and fix $x_0\in X$. We define $(T_X,d^*)$ to be the metric space induced as a quotient of the pseudometric space $(X,d')$.

	Specifically, we consider the equivalence relation $\sim$, where $x\sim y$ if and only if $d'(x,y)=0$. We let $T_X=X/\sim$, and $d^*([x],[y])=d'(x,y)$. We let $([x],[y])^*_{[x_0]}$ be the Gromov product under the metric $d^*$.
\end{defn}

\begin{rem}
	We will consider the choice of basepoint to be fixed. Although the induced space may vary depending on this choice, it will not affect any of the properties that we consider here.
\end{rem}

\begin{lem}
	\label{ZeroHyp}
	Let $(X,d)$ be a metric space. The metric space $(T_X,d^*)$ is 0-hyperbolic.
\end{lem}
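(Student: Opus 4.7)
The plan is to recognise the Gromov product of $d^*$ based at $[x_0]$ as nothing other than the quantity $(x,y)'_{x_0}$, so that the non-trivial ultrametric-type inequality has already been proved in \Cref{PMLem2}. Once this is in place, 0-hyperbolicity at the basepoint $[x_0]$ is automatic, and a standard change-of-basepoint argument upgrades this to 0-hyperbolicity at every basepoint of $T_X$.

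The first step is to compute $d'(x_0,x)$ for arbitrary $x\in X$. For any sequence $x_0=x_1,x_2,\ldots,x_n=x$ in $S_{x_0,x}$, the initial Gromov product $(x_0,x_2)_{x_0}=0$, so the minimum along the sequence is $0$; since the trivial sequence $x_0,x$ already realises this value, I get $(x_0,x)'_{x_0}=0$ and hence $d'(x_0,x)=d(x_0,x)$. Substituting into the definition of $d^*$ and expanding the Gromov product with respect to $d^*$ then yields
\begin{equation*}
([x],[y])^*_{[x_0]}=\tfrac{1}{2}\bigl(d(x_0,x)+d(x_0,y)-d'(x,y)\bigr)=(x,y)'_{x_0}.
\end{equation*}

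With this identification, \Cref{PMLem2} translates verbatim into
\begin{equation*}
([x],[z])^*_{[x_0]}\geqslant\min\bigl\{([x],[y])^*_{[x_0]},([y],[z])^*_{[x_0]}\bigr\}
\end{equation*}
for all $x,y,z\in X$, which is exactly the 0-hyperbolicity inequality in the sense of \Cref{HypDef1}, but only verified at the single basepoint $[x_0]$.

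To finish, I would invoke the well-known fact that if the Gromov four-point inequality holds with constant $\delta$ at one basepoint of a metric space, then it holds with constant $2\delta$ at every basepoint; taking $\delta=0$, the inequality propagates to all basepoints of $T_X$, so $(T_X,d^*)$ is 0-hyperbolic. There is no real obstacle here: the entire content of the lemma is packaged inside \Cref{PMLem2}, and the proof reduces to unwinding the definitions to see that the pseudometric $d'$ was designed precisely so that its Gromov product at $x_0$ agrees with the sup-min quantity $(x,y)'_{x_0}$.
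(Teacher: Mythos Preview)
Your proof is correct and follows essentially the same route as the paper's: identify $([x],[y])^*_{[x_0]}$ with $(x,y)'_{x_0}$ via the observation that $d'(x_0,x)=d(x_0,x)$, and then read off the $0$-hyperbolicity inequality at $[x_0]$ directly from \Cref{PMLem2}. The only difference is cosmetic: the paper derives $(x_0,x)'_{x_0}=0$ from \Cref{PMLem1} rather than by inspecting the first term of a sequence, and it silently passes from the inequality at the single basepoint $[x_0]$ to global $0$-hyperbolicity, whereas you make the change-of-basepoint step explicit.
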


\begin{proof}
	We first note that for any $x\in X$, we have that $0\leqslant (x_0,x)'_{x_0}\leqslant d(x_0,x_0)=0$ by \Cref{PMLem1}, so in particular $d'(x_0,x)=d(x_0,x)$. We can now see that for any $x,y\in X$ we have that
	\begin{align*}
	(x,y)'_{x_0} & =\frac{1}{2}(d(x_0,x)+d(x_0,y)-d'(x,y))
	\\ & =\frac{1}{2}(d'(x_0,x)+d'(x_0,y)-d'(x,y))
	\\ & =\frac{1}{2}(d^*([x_0],[x])+d^*([x_0],[y])-d^*([x],[y]))
	\\ & =([x],[y])^*_{[x_0]},
	\end{align*}
	the Gromov product for the metric $d^*$. For any $x,y,z\in X$, we know from \Cref{PMLem2} that
	\begin{equation*}
	(x,z)'_{x_0}\geqslant\min\{(x,y)'_{x_0},(y,z)'_{x_0}\},
	\end{equation*}
	and therefore
	\begin{equation*}
	([x],[z])^*_{[x_0]}\geqslant\min\{([x],[y])^*_{[x_0]},([y],[z])^*_{[x_0]}\},
	\end{equation*}
	so $T_X$ is 0-hyperbolic.
\end{proof}

To get our desired $\mathbb{R}$-tree, we will need the additional assumption that the space $X$ is geodesic.

\begin{prop}
	\label{Tree}
	Let $(X,d)$ be a geodesic metric space. The metric space $(T_X,d^*)$ is an $\mathbb{R}$-tree.
\end{prop}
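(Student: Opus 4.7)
The plan is to combine Lemma \ref{ZeroHyp} with the characterisation of $\mathbb{R}$-trees as 0-hyperbolic geodesic spaces: since $T_X$ is already known to be 0-hyperbolic, it suffices to show that $(T_X, d^*)$ is geodesic. The natural candidate for a geodesic between $[x]$ and $[y]$ is obtained by pushing forward a pair of geodesics from $x_0$ in $X$: walk back from $[x]$ along the image of a geodesic $[x_0, x]$ to the "branch point" $[p]$, then forward along the image of $[x_0, y]$ to $[y]$, where $p$ and the corresponding point $q$ on $[x_0, y]$ both lie at distance $t := (x,y)'_{x_0}$ from $x_0$.

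The first key step is to verify that the quotient map $X \to T_X$ restricted to any geodesic $[x_0, x]$ is isometric. If $p, p' \in [x_0, x]$ with $d(x_0, p) \leqslant d(x_0, p')$, a direct computation gives $(p, p')_{x_0} = d(x_0, p)$; combined with the trivial sequence $p, p'$ and with Lemma \ref{PMLem1} this forces $(p, p')'_{x_0} = d(x_0, p)$, so $d'(p, p') = d(p, p')$. In particular $d^*([x_0], [p]) = d(x_0, p)$ for any such $p$, and images of subsegments of $[x_0, x]$ are geodesics of the same length in $T_X$.

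The main obstacle, and the only place where a nontrivial argument is needed, is showing that $[p] = [q]$ whenever $p \in [x_0, x]$ and $q \in [x_0, y]$ both lie at distance $t = (x,y)'_{x_0}$ from $x_0$. By Lemma \ref{PMLem1} we have $(p, q)'_{x_0} \leqslant t$, so we must establish the reverse inequality. Given $\varepsilon > 0$, pick a sequence $x = x_1, \ldots, x_n = y$ realising $\min_i (x_i, x_{i+1})_{x_0} \geqslant t - \varepsilon$, and prepend $p$ and append $q$. Since $p$ lies on a geodesic from $x_0$ to $x$, one computes $(p, x)_{x_0} = d(x_0, p) = t$, and similarly $(q, y)_{x_0} = t$, so the new sequence still has minimum Gromov product at least $t - \varepsilon$. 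Letting $\varepsilon \to 0$ gives $(p, q)'_{x_0} \geqslant t$, hence $d'(p, q) = 2t - 2t = 0$ and $[p] = [q]$.

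Finally, concatenating the isometric images of $[p, x] \subset [x_0, x]$ and $[q, y] \subset [x_0, y]$ at $[p] = [q]$ produces a path in $T_X$ from $[x]$ to $[y]$ of length $(d(x_0, x) - t) + (d(x_0, y) - t) = d(x_0, x) + d(x_0, y) - 2(x, y)'_{x_0} = d^*([x], [y])$, which is therefore a geodesic. Hence $(T_X, d^*)$ is both geodesic and 0-hyperbolic, and so is an $\mathbb{R}$-tree.
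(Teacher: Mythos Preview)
Your proof is correct and follows essentially the same approach as the paper: both reduce to geodesicity via \Cref{ZeroHyp}, verify that the quotient map is an isometry on radial geodesics from $x_0$, and then for arbitrary $[x],[y]$ pick points on $[x_0,x]$ and $[x_0,y]$ at distance $(x,y)'_{x_0}$ from $x_0$, show they collapse together by extending a near-optimal sequence from $S_{x,y}$, and concatenate. The only cosmetic difference is that the paper phrases the key step as a supremum-of-minima manipulation (replacing $S_{x,y}$ by sequences beginning with $x',x$ and ending with $y,y'$) whereas you pass to an $\varepsilon$-good sequence and let $\varepsilon\to 0$; the content is identical.
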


\begin{proof}
	We know that $T_X$ is 0-hyperbolic by \Cref{ZeroHyp}. It therefore only remains to show that $T_X$ is geodesic.
	
	Let $f:X\to T_X$ be the quotient map $f(x)=[x]$. As $d^*([x],[y])\leqslant d(x,y)$, it is obvious that $f$ is continuous, and moreover we can see that it is an isometry on any geodesic ray starting from $x_0$. Let $y\in X$, and let $x\in[x_0,y]$. We can note that
	\begin{equation*}
	(x,y)_{x_0}\leqslant(x,y)'_{x_0}\leqslant d(x_0,x)=(x,y)_{x_0},
	\end{equation*}
	so $(x,y)_{x_0}=(x,y)'_{x_0}$. Therefore
	\begin{equation*}
	d(x,y)=d'(x,y)=d^*([x],[y]).
	\end{equation*}
	In particular, the image of $[x,y]$ will be a geodesic between $[x]$ and $[y]$ in $T_X$.
	
	Now let $[x],[y]\in T_X$ be arbitrary. We want to construct a path in $T_X$ between $[x]$ and $[y]$ that has length $d^*([x],[y])$. Consider representatives $x$ and $y$ of these equivalence classes in $X$. We consider geodesics $[x_0,x]$ and $[x_0,y]$ in $X$, and pick the unique points $x'\in[x_0,x]$ and $y'\in[x_0,y]$ such that
	\begin{equation*}
	d(x_0,x')=d(x_0,y')=(x,y)'_{x_0}.
	\end{equation*}
	This means that
	\begin{align*}
	d(x,x')+d(y',y) & =d(x_0,x)+d(x_0,y)-2(x,y)'_{x_0}=d'(x,y),
	\end{align*}
	so $d^*([x],[x'])+d^*([y'],[y])=d^*([x],[y])$. Hence if we can show that $[x']=[y']$ then we can simply take the union of the geodesics $[[x],[x']]$ and $[[y'],[y]]$ to find a geodesic between $[x]$\linebreak and $[y]$.
	
	We have that
	\begin{equation*}
	d(x_0,x')=d(x_0,y')=([x],[y])^*_{[x_0]}=(x,y)'_{x_0}=\sup_{S_{x,y}}\min_{1\leqslant i\leqslant n-1}(x_i,x_{i+1})_{x_0}.
	\end{equation*}
	Recall that this supremum is over all finite sequences in $X$ beginning with $x$ and ending with $y$. Given that
	\begin{equation*}
	(x,x')_{x_0}=d(x_0,x')=d(x_0,y')=(y,y')_{x_0},
	\end{equation*}
	in this case we would get the same result if we took the supremum over all finite sequences beginning with $x,x',x$ and ending with $y,y',y$. This in turn is equivalent to taking the supremum over all finite sequences beginning with $x',x$ and ending with $y,y'$, which is no larger than taking the supremum over all sequences beginning with $x'$ and ending with $y'$, which is $S_{x',y'}$. Therefore
	\begin{equation*}
	d(x_0,x')=d(x_0,y')\leqslant (x',y')'_{x_0}\leqslant d(x_0,x')=d(x_0,y'),
	\end{equation*}
	so we have equality, and, in particular, $d'(x',y')=0$, so $[x']=[y']$. Hence $T_X$ is a geodesic 0-hyperbolic space, therefore it is an $\mathbb{R}$-tree.
\end{proof}

Informally, we construct $T_X$ from a geodesic metric space $X$ by collapsing $X$ along the spheres $S(x_0,r)$, where two points in $S(x_0,r)$ collapse to the same point in $T_X$ if and only if for every $\varepsilon>0$ there exists a path $\gamma$ between them in $X$ such that $d(x_0,\gamma)\geqslant r-\varepsilon$. Alternatively, we can view $T_X$ as the space that we get if we collapse $X$ along its ends.

This idea will be formalised in Section 6. It is not necessary for the proofs in Sections 4 or 5, however it may be a useful image to keep in mind, and it also motivates the language we use to describe $T_X$.

\begin{defn}
	Let $(X,d)$ be a geodesic metric space. We call $(T_X,d^*)$ the \emph{end-approximating tree} of $(X,d)$, and $f:X\to T_X$ defined by $f(x)=[x]$ the \emph{end-approximating map}.
\end{defn}

\begin{rem}
	\Cref{Pseudometric} tells us that the end-approximating map is non-expanding.
\end{rem}

We will often use slightly different terminology for the end-approximating tree of a quasi-tree.

\begin{con}
	When $(X,d)$ is a quasi-tree, we will occasionally simply refer to $(T_X,d^*)$ as an \emph{approximating tree}. This terminology is justified in Section 4.1.
\end{con}

\begin{figure}[h]
	\centering
	\includegraphics[width=0.7\textwidth]{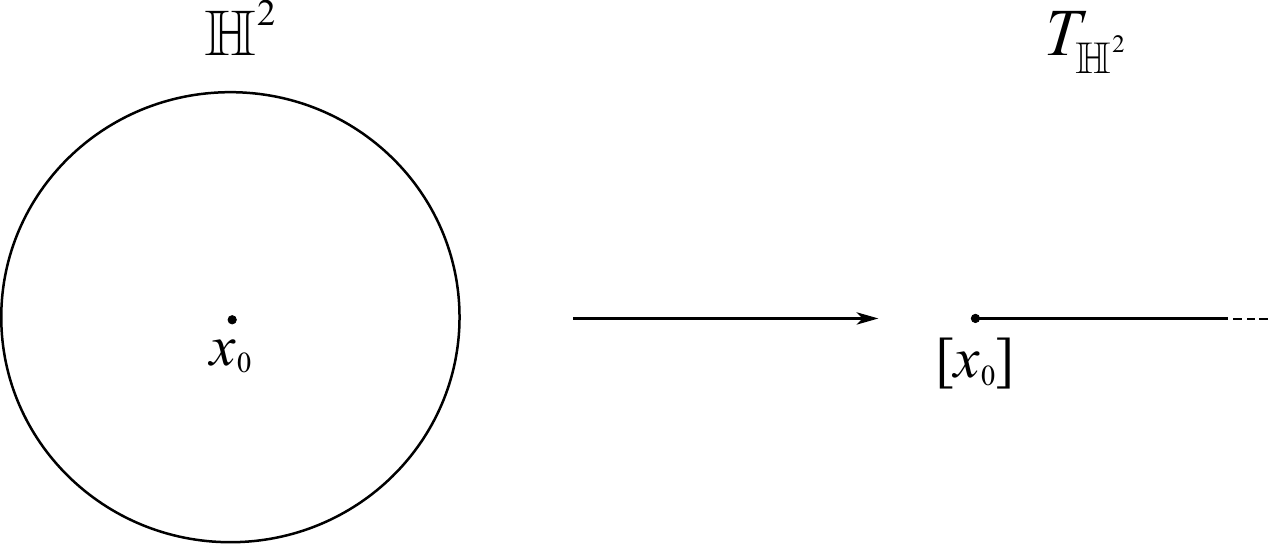}
	\caption{End-approximating tree of the hyperbolic plane}
\end{figure}

\begin{figure}[h]
	\centering
	\includegraphics[width=0.7\textwidth]{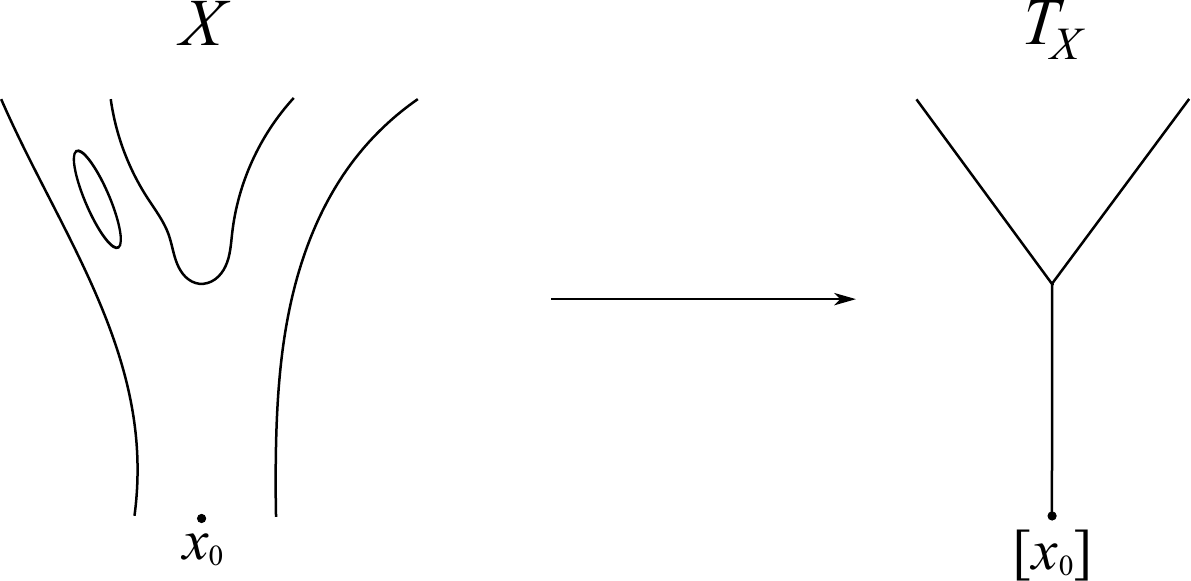}
	\caption{Approximating tree of a quasi-tree}
\end{figure}

\section{Quasi-isometries with trees}

For a general geodesic metric space $X$, the end-approximating tree $T_X$ may bear very little resemblance to $X$. For example, if $X=\mathbb{H}^n$ or $\mathbb{R}^n$ for some $n\geqslant 2$ then $T_X$ is isometric to $[0,\infty)$ (see Figure 2 and \Cref{OneEndedExample}). On the other hand, when $X$ is a quasi-tree, we might expect $T_X$ to look much more like the original space (see Figure 3). This makes sense intuitively, as in a quasi-tree we would expect there to be a limit on how far points can collapse together along spheres. We will show that in this case the end-approximating map $f:X\to T_X$ is in fact a $(1,C)$-quasi-isometry, and then use this to find a $(1,C')$-quasi-isometry from $X$ to a simplicial tree.

\subsection{$\mathbb{R}$-trees}

In this subsection, we will prove that if $X$ is a quasi-tree, then there exists some $C\geqslant 0$ such that the end-approximating map $f:X\to T_X$ is $(1,C)$-quasi-isometry. To achieve this, we must first prove a uniform version of \Cref{HyperbolicInequality} for quasi-trees, for which we will need Manning's bottleneck criterion.

\begin{prop}
	\label{Constant}
	A geodesic metric space $(X,d)$ is a quasi-tree if and only if there exists $A\geqslant 0$ such that any $x_0,x_1,\ldots,x_n\in X$ satisfy
	\begin{equation*}
	(x_1,x_n)_{x_0}\geqslant\min_{1\leqslant i\leqslant n-1}(x_i,x_{i+1})_{x_0}-A.
	\end{equation*}
\end{prop}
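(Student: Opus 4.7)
I would prove both implications via Manning's bottleneck criterion (\Cref{Bottleneck}); the nontrivial case is the reverse direction.

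\emph{Forward direction.} Suppose $X$ is a quasi-tree with bottleneck constant $\Delta$, so by \Cref{DeltaHyperbolic} $X$ is $\Delta$-hyperbolic. Set $m=\min_{1\leqslant i\leqslant n-1}(x_i,x_{i+1})_{x_0}$. By \Cref{Aux2}, $d(x_0,[x_i,x_{i+1}])\geqslant (x_i,x_{i+1})_{x_0}\geqslant m$ for each $i$, so the concatenated path $\gamma=[x_1,x_2]\cup\cdots\cup[x_{n-1},x_n]$ from $x_1$ to $x_n$ stays at distance at least $m$ from $x_0$. By \Cref{Bottleneck}, every point of the geodesic $[x_1,x_n]$ lies within $\Delta$ of $\gamma$, so $d(x_0,[x_1,x_n])\geqslant m-\Delta$. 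Combining this with \Cref{Aux1} (which, applied to a point that is nearly closest on $[x_1,x_n]$ to $x_0$, gives $(x_1,x_n)_{x_0}\geqslant d(x_0,[x_1,x_n])-2\Delta$) yields $(x_1,x_n)_{x_0}\geqslant m-3\Delta$. So $A=3\Delta$ works.

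\emph{Reverse direction.} Conversely, assume the stated inequality holds for some constant $A$. I would verify the bottleneck criterion with constant $A$. Let $[x,y]$ be any geodesic, let $z\in[x,y]$, and let $\gamma:[0,1]\to X$ be any continuous path from $x$ to $y$; the aim is to show that $\gamma$ meets $B(z,A)$. Since $z$ lies on a geodesic from $x$ to $y$, we have $(x,y)_z=0$. For $\varepsilon>0$, use uniform continuity of $\gamma$ to choose a sequence of points $x=w_1,\ldots,w_n=y$ along $\gamma$ with $d(w_i,w_{i+1})<\varepsilon$. Applying the hypothesis with $z$ as the basepoint gives an index $i$ with $(w_i,w_{i+1})_z\leqslant A$; expanding the Gromov product and using $|d(z,w_i)-d(z,w_{i+1})|<\varepsilon$ forces $d(z,w_i)\leqslant A+\tfrac{3\varepsilon}{2}$. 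Since $t\mapsto d(z,\gamma(t))$ is continuous on a compact interval, letting $\varepsilon\to 0$ produces a point of $\gamma$ at distance at most $A$ from $z$. Hence the bottleneck criterion holds with $\Delta=A$, and $X$ is a quasi-tree.

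\textbf{Where the difficulty lies.} The forward direction is essentially a packaging of standard quasi-tree tools: the bottleneck property supplies the geometric input, and the near-equality between the Gromov product and the distance from $x_0$ to a geodesic (\Cref{Aux1} and \Cref{Aux2}) absorbs the remaining slack into a uniform constant. The real content is the reverse direction, where the hypothesis only controls finite sequences yet the bottleneck criterion must be verified against arbitrary continuous paths. The approximation step is the one that must be set up carefully, so that the $\varepsilon$-error vanishes in the limit and the bottleneck constant comes out to $A$ without picking up any extra hyperbolicity cost.
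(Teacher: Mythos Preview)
Your proof is correct and follows essentially the same route as the paper: both directions go through Manning's bottleneck criterion, using \Cref{Aux1} and \Cref{Aux2} to pass between Gromov products and distances to geodesics in the forward direction, and discretising an arbitrary path to verify the bottleneck condition in the reverse direction. The only cosmetic difference is that the paper uses a single discretisation at scale $2A$ (obtaining bottleneck constant $2A$), whereas you let the mesh tend to zero and use compactness of $\gamma([0,1])$ to squeeze out the sharper constant $A$.
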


\begin{proof}
	Suppose $X$ is a $\delta$-hyperbolic quasi-tree with bottleneck constant $\Delta\geqslant 0$. Let $x_0,x_1,\ldots,x_n$ $\in X$ be arbitrary, and consider a geodesic $[x_1,x_n]$. By \Cref{Aux1} there exists $z\in[x_1,x_n]$ such that $d(x_0,z)-2\delta\leqslant (x_1,x_n)_{x_0}$.
	
	Consider the path $[x_1,x_2]\cup\cdots\cup[x_{n-1},x_n]$. By \Cref{Bottleneck}, the extended version of Manning's bottleneck criterion, for some $1\leqslant i\leqslant n-1$ there exists $z'\in[x_i,x_{i+1}]$ such that $z'\in B(z,\Delta)$, and $(x_i,x_{i+1})_{x_0}\leqslant d(x_0,z')$ by \Cref{Aux2}. We can put this together to get that
	\begin{align*}
	(x_i,x_{i+1})_{x_0}\leqslant d(x_0,z')\leqslant d(x_0,z)+d(z,z')\leqslant (x_1,x_n)_{x_0} +\Delta +2\delta.
	\end{align*}
	Hence $(x,y)_{x_0}\geqslant \min_{1\leqslant i\leqslant n-1}(x_i,x_{i+1})_{x_0}-(\Delta+2\delta)$. This concludes the proof of one direction.
	
	Now suppose that $(X,d)$ is a geodesic metric space, and that there exists $A\geqslant 0$ such that any $x_0,x_1,\ldots,x_n\in X$ satisfy $(x_1,x_n)_{x_0}\geqslant\min_{1\leqslant i\leqslant n-1}(x_i,x_{i+1})_{x_0}-A$. We want to show that $X$ satisfies Manning's bottleneck criterion.
	
	Let $x,y\in X$, with $[x,y]$ a geodesic between them with midpoint $m$, and let $\gamma$ be any path from $x$ to $y$. Choose $x=x_1,\ldots,x_n=y$ such that $x_i\in\gamma$ for all $1\leqslant i\leqslant n$ and $d(x_i,x_{i+1})\leqslant2A$ for all $1\leqslant i\leqslant n-1$.
	
	Note that $(x_1,x_n)_m=(x,y)_m=0$. We therefore have that
	\begin{align*}
	\min_{1\leqslant i\leqslant n-1}(x_i, x_{i+1})_m-A\leqslant (x_1,x_n)_m \implies \min_{1\leqslant i\leqslant n-1}(x_i,x_{i+1})_m\leqslant A,
	\end{align*}
	so for some $1\leqslant i\leqslant n-1$, we have that
	\begin{align*}
	(x_i,x_{i+1})_m\leqslant A & \implies \frac{1}{2}(d(m,x_i)+d(m,x_{i+1})-d(x_i,x_{i+1}))\leqslant A
	\\ & \implies d(m,x_i)+d(m,x_{i+1})\leqslant 4A.
	\end{align*}
	Hence there exists $1\leqslant i\leqslant n$ such that $d(m,x_i)\leqslant 2A$. As $x_i\in\gamma$, we have that there exists $z\in\gamma$ such that $d(m,z)\leqslant 2A$. By Manning's bottleneck criterion, $X$ is therefore a quasi-tree.
\end{proof}

We can now combine this with the construction of the end-approximating tree to show that for every quasi-tree the end-approximating map is a $(1,C)$-quasi-isometry.

\begin{prop}
	\label{QuasiIsometry}
	Let $(X,d)$ be a quasi-tree with bottleneck constant $\Delta\geqslant 0$ and hyperbolicity constant $\delta\geqslant 0$. Then the end-approximating map $f:X\to T_X$ is a $(1,2(\Delta+2\delta))$-quasi-isometry.
\end{prop}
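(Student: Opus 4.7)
The plan is to unpack what the $(1, C)$-quasi-isometry condition says in terms of the Gromov product, and then apply \Cref{Constant} directly. Surjectivity of $f$ is immediate from its definition $f(x) = [x]$, so the only work is in showing the coarse bi-Lipschitz bounds.

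The upper bound $d^*([x], [y]) \leq d(x, y)$ is already recorded: \Cref{Pseudometric} gives $d' \leq d$, and $d^*$ is defined to equal $d'$ on equivalence classes. For the lower bound, I would rewrite both distances via the Gromov product. Since
\begin{equation*}
	d(x, y) = d(x_0, x) + d(x_0, y) - 2(x, y)_{x_0}
\end{equation*}
and
\begin{equation*}
	d^*([x], [y]) = d'(x, y) = d(x_0, x) + d(x_0, y) - 2(x, y)'_{x_0},
\end{equation*}
the desired inequality $d^*([x], [y]) \geq d(x, y) - 2(\Delta + 2\delta)$ is equivalent to the bound
\begin{equation*}
	(x, y)'_{x_0} \leq (x, y)_{x_0} + (\Delta + 2\delta).
\end{equation*}

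This is exactly what \Cref{Constant} delivers. For any finite sequence $x = x_1, \ldots, x_n = y$ in $S_{x,y}$, the proof of \Cref{Constant} shows that
\begin{equation*}
	(x, y)_{x_0} = (x_1, x_n)_{x_0} \geq \min_{1 \leq i \leq n-1}(x_i, x_{i+1})_{x_0} - (\Delta + 2\delta),
\end{equation*}
since the constant $A$ obtained there is precisely $\Delta + 2\delta$. Taking the supremum over all sequences in $S_{x,y}$ gives $(x, y)_{x_0} \geq (x, y)'_{x_0} - (\Delta + 2\delta)$, which is the required inequality.

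There isn't really a main obstacle here; the work has all been done in \Cref{Constant}. The only thing to be careful about is tracking the factor of $2$ when converting between the Gromov product bound and the bound on $d^*$, and ensuring that the constant $A$ from \Cref{Constant} is indeed $\Delta + 2\delta$ with the stated hyperbolicity and bottleneck constants, which is visible from that proof.
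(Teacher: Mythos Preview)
Your proposal is correct and essentially identical to the paper's proof: both note surjectivity of $f$, use $d'\leqslant d$ for the upper bound, and then apply \Cref{Constant} (with $A=\Delta+2\delta$) to bound $(x,y)'_{x_0}\leqslant (x,y)_{x_0}+\Delta+2\delta$, which converts to the lower bound on $d^*$ via the factor of $2$ in the definition of $d'$.
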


\begin{proof}
	We first note that the end-approximating map is surjective, and hence is coarsely surjective. Now recall that for $[x],[y]\in T_X$, the distance between them is
	\begin{equation*}
	d^*([x],[y])=d'(x,y)=d(x_0,x)+d(x_0,y)-2(x,y)'_{x_0},
	\end{equation*}
	where
	\begin{equation*}
	(x,y)'_{x_0}=\sup_{S_{x,y}}\min_{1\leqslant i\leqslant n-1}(x_i,x_{i+1})_{x_0}.
	\end{equation*}
	We know that $(x,y)_{x_0}\leqslant(x,y)'_{x_0}$. By \Cref{Constant}, we also obtain that 
	\begin{equation*}
	(x,y)'_{x_0}\leqslant (x,y)_{x_0}+\Delta+2\delta.
	\end{equation*}
	Putting these inequalities together and substituting them into the definition of $d'(x,y)$ gives us that
	\begin{equation*}
	d(x,y)-2(\Delta+2\delta)\leqslant d'(x,y)\leqslant d(x,y),
	\end{equation*}
	and so
	\begin{equation*}
	d(x,y)-2(\Delta+2\delta)\leqslant d^*([x],[y])\leqslant d(x,y).
	\end{equation*}
	Hence the function $f$ is a $(1,2(\Delta+2\delta))$-quasi-isometry from $X$ to $T_X$.
\end{proof}

\begin{cor}
	\label{QuasiIsometry2}
	For every quasi-tree $X$, there exist an $\mathbb{R}$-tree $T$ and a constant $C\geqslant 0$ such that $X$ is $(1,C)$-quasi-isometric to $T$.
\end{cor}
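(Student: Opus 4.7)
The statement is essentially an immediate repackaging of \Cref{QuasiIsometry}, so the plan is very short. My proposal is to take $T = T_X$, the end-approximating tree of $X$ constructed in Section 3, and extract the constant from the previous proposition.

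First I would note that a quasi-tree is by definition geodesic, so \Cref{Tree} applies and $T_X$ is an $\mathbb{R}$-tree. Second, a quasi-tree is hyperbolic (as recorded after \Cref{Bottleneck}, with \Cref{DeltaHyperbolic} even giving a quantitative bound in terms of the bottleneck constant), so $X$ admits some hyperbolicity constant $\delta \geqslant 0$ and some bottleneck constant $\Delta \geqslant 0$. Third, I would invoke \Cref{QuasiIsometry} directly: the end-approximating map $f : X \to T_X$ is a $(1, 2(\Delta + 2\delta))$-quasi-isometry, so setting $C = 2(\Delta + 2\delta)$ produces the desired $\mathbb{R}$-tree and constant.

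There is no obstacle here to speak of — the work has all been done in \Cref{Tree} (which gives the target space) and \Cref{QuasiIsometry} (which gives the map and the constant). The only thing to check is that the hypotheses of \Cref{QuasiIsometry} are satisfied, which is exactly the content of the two observations above. The role of this corollary in the paper is simply to strip the quantitative dependence on $\Delta$ and $\delta$ out of the statement, since for \Cref{IntroSimplicialIff} and later applications only the \emph{existence} of such a $C$ matters.
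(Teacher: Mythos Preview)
Your proposal is correct and matches the paper's approach exactly: the corollary is stated without proof immediately after \Cref{QuasiIsometry}, and your argument—take $T=T_X$, invoke \Cref{Tree} to see it is an $\mathbb{R}$-tree, and apply \Cref{QuasiIsometry} with $C=2(\Delta+2\delta)$—is precisely the intended reading.
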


\subsection{Simplicial trees}

We note here that $T_X$ will certainly not be a simplicial tree in general. This can be seen when $X$ is an $\mathbb{R}$-tree, since then $T_X=X$, so if $X$ is not simplicial then $T_X$ will not be either (see \Cref{XRTree}). Even in the relatively natural case that our quasi-tree $X$ is a graph and the basepoint $x_0\in X$ is a vertex, the $\mathbb{R}$-tree $T_X$ may fail to be simplicial with edge lengths 1. However, it is not hard to see that we can substitute our $\mathbb{R}$-tree for a simplicial tree with only an additive error in the quasi-isometry. The simplicial tree we construct here comes from a technique Manning uses in the proof of the bottleneck criterion \cite{Manning2005}.

\begin{figure}[h]
	\centering
	\includegraphics[width=0.7\textwidth]{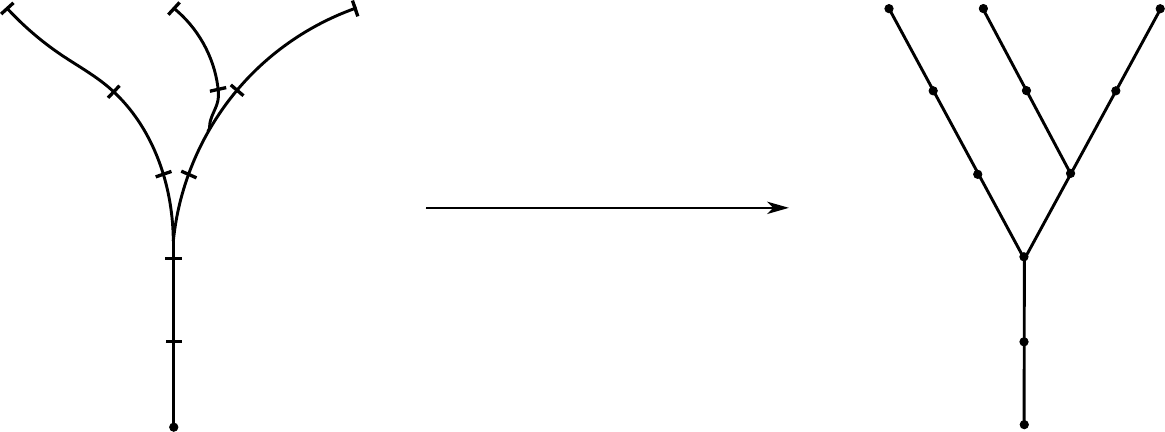}
	\caption{Constructing a simplicial tree from an $\mathbb{R}$-tree}
\end{figure}

Let $T$ be an $\mathbb{R}$-tree. We begin our construction by picking a basepoint $x_0\in T$, and letting $T_0=\{x_0\}$. We then let $\Gamma_0=(V_0,E_0)$ be a graph with $V_0=\{v_{x_0}\}$ and $E_0=\emptyset$.

Given $\Gamma_{k-1}$, we construct a new graph $\Gamma_k$ by adding the next layer of vertices. Let $T_k=S(x_0,k)$, and then let $U_k=\{v_x:x\in T_k\}$. We let the vertex set of $\Gamma_k$ be $V_k=V_{k-1}\cup U_k$, and the edge set be  $E_k=E_{k-1}\cup\{\{v_x,v_y\}: v_x\in U_{k-1}, v_y\in U_k, x\in[x_0,y]\}$.

If $\Gamma_{k-1}$ is a simplicial tree, then so is $\Gamma_k$. This is due to the fact that $T$ is an $\mathbb{R}$-tree, so for every $y\in T_k$ there is a unique $x\in T_{k-1}$ such that $x\in[x_0,y]$, and this rules out the existence of any cycles. We now let $\Gamma=(V,E)$, where $V=\bigcup_k V_k$ and $E=\bigcup_k E_k$. The graph $\Gamma$ is also a simplicial tree, as any cycle in $\Gamma$ would have to exist in some subgraph $\Gamma_k$.

We define a map $\psi:T\to\Gamma$ as follows. For every $x\in T$, let $x'\in T$ be the unique point such that $x'\in[x_0,x]$ and $d_T(x_0,x')=\lfloor d_T(x_0,x)\rfloor$. We then set $\psi(x)=v_{x'}$.

\begin{prop}
	\label{Simplicial}
	Every $\mathbb{R}$-tree is $(1,2)$-quasi-isometric to a simplicial tree.
\end{prop}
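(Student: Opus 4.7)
The plan is to show that the map $\psi:T\to\Gamma$ constructed in the preamble is itself a $(1,2)$-quasi-isometry, by analysing distances using the $\mathbb{R}$-tree structure of $T$ and the tree structure of $\Gamma$. First I would establish coarse surjectivity. For any vertex $v_a \in V$, the point $a$ lies in some $T_k$, so $d_T(x_0,a)=k$ is an integer and $\psi(a)=v_a$ is exactly $v_a$; hence every vertex of $\Gamma$ is in the image. Since $\Gamma$ is a simplicial tree with edges of length $1$, every point of $\Gamma$ is within distance $1/2$ of a vertex, giving $1/2$-coarse surjectivity.

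Next I would compute $d_\Gamma(\psi(x),\psi(y))$ explicitly. Write $k_x=\lfloor d_T(x_0,x)\rfloor$ and similarly $k_y$, and let $z$ be the (unique) point with $[x_0,x]\cap[x_0,y]=[x_0,z]$, which exists because $T$ is an $\mathbb{R}$-tree. Set $k_z=\lfloor d_T(x_0,z)\rfloor$. For each integer $0\leqslant i\leqslant k_x$ let $a_i$ denote the point on $[x_0,x]$ at distance $i$ from $x_0$, and similarly $b_i$ on $[x_0,y]$. The construction of $\Gamma$ forces the unique path from $v_{x_0}$ to $v_{x'}=\psi(x)$ to be $v_{x_0},v_{a_1},\ldots,v_{a_{k_x}}$, and analogously for $\psi(y)$. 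Since $a_i=b_i$ iff $i\leqslant d_T(x_0,z)$, the two paths agree up to level $k_z$ and diverge after, so the branching vertex in $\Gamma$ is $v_{a_{k_z}}=v_{b_{k_z}}$, yielding
\begin{equation*}
d_\Gamma(\psi(x),\psi(y))=k_x+k_y-2k_z.
\end{equation*}

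Finally I would compare this with the $\mathbb{R}$-tree formula $d_T(x,y)=d_T(x_0,x)+d_T(x_0,y)-2d_T(x_0,z)$. Writing $\alpha,\beta,\gamma\in[0,1)$ for the fractional parts of $d_T(x_0,x),d_T(x_0,y),d_T(x_0,z)$ respectively, subtracting gives
\begin{equation*}
d_T(x,y)-d_\Gamma(\psi(x),\psi(y))=\alpha+\beta-2\gamma\in(-2,2),
\end{equation*}
so $\psi$ is a $(1,2)$-quasi-isometric embedding, and combined with the $1/2$-coarse surjectivity above it is a $(1,2)$-quasi-isometry. The only mildly delicate point is verifying that the formula $k_x+k_y-2k_z$ for $d_\Gamma$ holds uniformly even in degenerate cases (e.g.\ when $k_z=k_x$ or $v_{x'}=v_{y'}$), but these are handled by noting that $k_z\leqslant\min(k_x,k_y)$ always, so the branching vertex $v_{a_{k_z}}$ sits on both paths from the root.
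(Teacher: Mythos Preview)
Your proof is correct and follows essentially the same approach as the paper: you use the same simplicial tree $\Gamma$ and map $\psi$, and analyse distances via the tripod point $z$ to identify $v_{a_{k_z}}$ as the branching vertex in $\Gamma$. Your presentation is in fact slightly cleaner than the paper's, since you derive the closed formula $d_\Gamma(\psi(x),\psi(y))=k_x+k_y-2k_z$ directly and compare fractional parts, whereas the paper first treats the special case $x\in[x_0,y]$ and then assembles the general case from two such pieces.
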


\begin{proof}
	Let $T$ be an $\mathbb{R}$-tree, and let $\Gamma$ be the simplicial tree constructed above. We will show that the map $\psi:T\to\Gamma$ is a $(1,2)$-quasi-isometry.
	
	We will first show that $\psi$ is coarsely surjective. Let $w\in \Gamma$. Then there exists a vertex $v\in \Gamma$ such that $d_\Gamma(w,v)\leqslant 1$. By the definition of $\Gamma$, there exists some $x\in T$ such that $v_x=v$. We therefore have that $\psi(x)=v$, so we are done.
	
	We will now show that $\psi$ is coarsely bi-Lipschitz. Let $x,y\in T$. Suppose first that $x\in[x_0,y]$. Then $\psi(x)\in[v_{x_0},\psi(y)]$, so $d_\Gamma(\psi(x),\psi(y))=\lfloor d_T(x_0,y)\rfloor-\lfloor d_T(x_0,x)\rfloor$. As $d_T(x,y)=d_T(x_0,y)-d_T(x_0,x)$, this means that
	\begin{equation*}
	d_T(x,y)-1\leqslant d_\Gamma(\psi(x),\psi(y))\leqslant d_T(x,y)+1.
	\end{equation*}
	
	Now suppose $x,y\in T$ are any two points. Let $z\in[x_0,x]\cap[x_0,y]$ be the unique point such that $d_T(x_0,z)=d_T(x_0,[x,y])$. Recall that $z'\in T$ is the unique point such that $z'\in[x_0,z]$ and $d_T(x_0,z')=\lfloor d_T(x_0,z)\rfloor$, so $[x_0,x]\cap[x_0,y]\cap S(x_0,d_T(x_0,z')+1)$ is empty. This means that $v_{z'}\in[\psi(x),\psi(y)]$, and so
	\begin{equation*}
	d_\Gamma(\psi(x),\psi(y))=d_\Gamma(\psi(x),\psi(z))+d_\Gamma(\psi(z),\psi(y))=d_T(x,z')+d_T(z',y).
	\end{equation*}
	Since
	\begin{align*}
	d_T(x,y)-2 & = d_T(x,z)+d_T(z,y)-2
	\\ & \leqslant d_T(x,z')+d_T(z',y)
	\\ & \leqslant d_T(x,z)+d_T(z,y)+2
	\\ & =d_T(x,y)+2,
	\end{align*}
	we can conclude that $\psi$ is a (1,2)-quasi-isometry from the $\mathbb{R}$-tree $T$ to the simplicial tree $\Gamma$.
\end{proof}

We noted in Section 2 that this means there is a (1,6)-quasi-isometry from $\Gamma$ to $T$, although it is in fact possible to improve this to show that $\Gamma$ is (1,2)-quasi-isometric to $T$. This can be proved via the map $\varphi:\Gamma\to T$, defined such that each edge $\{v_x,v_y\}$ is mapped isometrically to the geodesic $[x,y]$ in $T$. We provide a proof of this below, as $\varphi$ will be used in the proof of \Cref{CountableFiniteBackwards}.

\begin{prop}
	\label{SimplicialReverse}
	For every $\mathbb{R}$-tree $T$, there exists a simplicial tree that is $(1,2)$-quasi-isometric to $T$.
\end{prop}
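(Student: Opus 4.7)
The plan is to verify that the map $\varphi:\Gamma\to T$ sketched just before the proposition---defined by mapping each edge $\{v_x,v_y\}$ of $\Gamma$ isometrically onto the unit-length geodesic $[x,y]\subset T$, and therefore each vertex $v_x$ to $x$---is a $(1,2)$-quasi-isometry. Coarse surjectivity is straightforward: given $y\in T$, take $x\in[x_0,y]$ with $d_T(x_0,x)=\lfloor d_T(x_0,y)\rfloor$ (or $x=x_0$ if $d_T(x_0,y)<1$); then $v_x\in V$ and $d_T(\varphi(v_x),y)<1\leqslant 2$.

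The crucial observation for the bi-Lipschitz estimate is that $\varphi$ preserves distances to the basepoint, namely $d_T(x_0,\varphi(p))=d_\Gamma(v_{x_0},p)$ for every $p\in\Gamma$. This is because the unique $\Gamma$-geodesic from $v_{x_0}$ to $p$ passes in order through the vertices $v_{p_i}$ where $p_i\in[x_0,\varphi(p)]$ is the point at $T$-distance $i$ from $x_0$, and then traverses part of the edge containing $p$; the map $\varphi$ sends this concatenation of edges isometrically onto $[x_0,\varphi(p)]$. Using the tree identity $d(u,w)=d(v_{x_0},u)+d(v_{x_0},w)-2(u,w)_{v_{x_0}}$ in both $\Gamma$ and $T$, the bi-Lipschitz claim reduces to showing
\[
0\leqslant(\varphi(u),\varphi(w))_{x_0}-(u,w)_{v_{x_0}}\leqslant 1.
\]

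Since in any tree the Gromov product at the basepoint equals the distance from the basepoint to the geodesic joining the two points, this amounts to comparing the $T$-branch point $z_T$ on $[\varphi(u),\varphi(w)]$ with the $\Gamma$-branch point $r_\Gamma$ on $[u,w]$. I will check that $r_\Gamma$ is always a vertex of $\Gamma$, namely the last vertex common to the $\Gamma$-geodesics $[v_{x_0},u]$ and $[v_{x_0},w]$; by the previous paragraph $\varphi(r_\Gamma)$ then lies on $[x_0,\varphi(u)]\cap[x_0,\varphi(w)]=[x_0,z_T]$, which gives the lower bound $(u,w)_{v_{x_0}}\leqslant(\varphi(u),\varphi(w))_{x_0}$. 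For the upper bound, if one had $d_T(x_0,z_T)\geqslant d_\Gamma(v_{x_0},r_\Gamma)+1$, then the point of $[x_0,z_T]$ at $T$-distance $d_\Gamma(v_{x_0},r_\Gamma)+1$ from $x_0$ would correspond to a vertex of $\Gamma$ lying on both $[v_{x_0},u]$ and $[v_{x_0},w]$ strictly past $r_\Gamma$, contradicting the choice of $r_\Gamma$.

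The hard part will be the borderline case in which one of $u$ or $w$ sits in the interior of the edge whose far endpoint would otherwise furnish the missing common vertex past $r_\Gamma$; here one must verify directly that the $T$-branch point cannot lie more than the length of that edge past its near endpoint, which is precisely what yields the quasi-isometry constant $2$ rather than anything smaller.
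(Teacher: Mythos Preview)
Your strategy is sound and amounts to a Gromov-product repackaging of the paper's argument. The paper proves the bi-Lipschitz estimate by writing the $\Gamma$-geodesic $[u,w]$ as a chain of edges, locating the vertex $v_i$ on $[u,w]$ closest to $v_{x_0}$, and observing that $\varphi$ is an isometry on each of $[u,v_{i-1}]$ and $[v_{i+1},w]$ while the middle piece $[v_{i-1},v_{i+1}]$ of $\Gamma$-length $2$ may map to a $T$-segment of any length in $(0,2]$; this gives $d_\Gamma(u,w)-2\leqslant d_T(\varphi(u),\varphi(w))\leqslant d_\Gamma(u,w)$ directly. Your reformulation through the inequality $0\leqslant(\varphi(u),\varphi(w))_{x_0}-(u,w)_{v_{x_0}}\leqslant 1$ is equivalent and arguably cleaner, since it isolates the single place the map fails to be an isometry.

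Two small points are worth tightening. First, the branch point $r_\Gamma$ is \emph{not} always a vertex: if $u\in[v_{x_0},w]$ (or vice versa) then $r_\Gamma=u$, which may lie in the interior of an edge. This exception is harmless---in that case $\varphi$ is an isometry on $[v_{x_0},w]\supset\{u,w\}$, so the Gromov products agree exactly---but it should be separated out before asserting that $r_\Gamma$ is the last common \emph{vertex}. Second, the ``hard borderline case'' you flag does not actually arise. If $r_\Gamma$ is a vertex at level $k$ and $u$ lies in the interior of the edge from $r_\Gamma$ to some $v_s$ at level $k+1$, then $d_T(x_0,z_T)\leqslant d_T(x_0,\varphi(u))=d_\Gamma(v_{x_0},u)<k+1$ automatically, so the upper bound holds with no extra work. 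Conversely, under the hypothesis $d_T(x_0,z_T)\geqslant k+1$ one has $d_\Gamma(v_{x_0},u),\,d_\Gamma(v_{x_0},w)\geqslant k+1$, so both radial geodesics genuinely contain a vertex at level $k+1$, and your contradiction goes through cleanly. In short, the argument is complete once you split off the trivial case $r_\Gamma\in\{u,w\}$; the constant $2$ comes simply from the factor of $2$ in passing from Gromov products back to distances, not from any delicate edge analysis.
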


\begin{proof}
	Let $\Gamma$ and $\varphi$ be as above. We will show that $\varphi$ is a $(1,2)$-quasi-isometry.
	
	We will first show that $\varphi$ is coarsely surjective. Let $x\in T$, and let $k=\lfloor d(x_0,x)\rfloor$. Then there exists some $y\in T_k$ such that $d_T(x,y)<1$, so as $y$ is the image of $v_y$ under $\varphi$ we therefore have that $\varphi$ is coarsely surjective.
	
	We will now show that $\varphi$ is coarsely bi-Lipschitz. Note that it is immediate from the definition of $\Gamma$ and $\varphi$ that for any $x\in \Gamma$, the geodesic $[v_{x_0},x]$ is mapped isometrically by $\varphi$ onto the geodesic $[x_0,\varphi(x)]$ in $T$.
	
	Let $x,y\in \Gamma$ be points that are not necessarily vertices, and consider the geodesic $[x,y]$. If $[x,y]\cap V=\emptyset$, then $[x,y]$ is contained in an edge of $\Gamma$, and therefore is mapped isometrically onto $T$ by $\varphi$. Otherwise, we can write $[x,y]$ as
	\begin{equation*}
	[x,y]=[x,v_1]\cup[v_1,v_2]\cup\cdots\cup[v_{n-1},v_n]\cup[v_n,y],
	\end{equation*}
	where every $v_i\in V\cap[x,y]$, and each subsegment has length greater than 0 but no greater than 1, so each $[v_i,v_{i+1}]$ is an edge in $\Gamma$.
	
	Suppose that $d_{\Gamma}(v_{x_0},[x,y])=d_{\Gamma}(v_{x_0},x)$, so $x$ is the nearest point to $v_{x_0}$. Then as $\Gamma$ is a tree, we have that $[x,y]\subset [v_{x_0},y]$, so $\varphi$ is an isometry on $[x,y]$. If $d_{\Gamma}(v_{x_0},[x,y])=d_{\Gamma}(v_{x_0},y)$, then, by the same reasoning, $\varphi$ is again an isometry on $[x,y]$.
	
	If on the other hand $d_{\Gamma}(v_{x_0},[x,y])=d_{\Gamma}(v_{x_0},v_i)$, then although $[x,v_i]$ and $[v_i,y]$ are mapped isometrically onto $[\varphi(x),\varphi(v_i)]$ and $[\varphi(v_i),\varphi(y)]$, we many have that the geodesics $[\varphi(v_{i-1}),\varphi(v_i)]$ and $[\varphi(v_i),\varphi(v_{i+1})]$ intersect at points other than $\varphi(v_i)$. Note that $\varphi(v_{i-1})\neq \varphi(v_{i+1})$ as otherwise we would have $v_{i-1}=v_{i+1}$, which would contradict our description of the geodesic $[x,y]$. This means that we can write $[\varphi(x),\varphi(y)]$ as
	\begin{equation*}
	[\varphi(x),\varphi(y)]=[\varphi(x),\varphi(v_{i-1})]\cup[\varphi(v_{i-1}),\varphi(v_{i+1})]\cup[\varphi(v_{i+1}),\varphi(y)].
	\end{equation*}
	Given we know that $[x,y]=[x,v_{i-1}]\cup[v_{i-1},v_{i+1}]\cup[v_{i+1},y]$, along with the fact that $d_{\Gamma}(x,v_{i-1})=d_T(\varphi(x),\varphi(v_{i-1}))$ and $d_{\Gamma}(v_{i+1},y)=d_T(\varphi(v_{i+1}),\varphi(y))$, it therefore only remains to compare $d_T(\varphi(v_{i-1}),\varphi(v_{i+1}))$ with $d_{\Gamma}(v_{i-1},v_{i+1})$.
	
	We can note that $d_T(\varphi(v_{i-1}),\varphi(v_{i+1}))>0$, and also that
	\begin{equation*}
	d_T(\varphi(v_{i-1}),\varphi(v_{i+1}))\leqslant d_T(\varphi(v_{i-1}),\varphi(v_i))+d_T(\varphi(v_i),\varphi(v_{i+1}))=2.
	\end{equation*}
	As $d_{\Gamma}(v_{i-1},v_{i+1})=2$, it immediately follows that
	\begin{equation*}
	d_{\Gamma}(x,y)-2\leqslant d_T(\varphi(x),\varphi(y))\leqslant d_{\Gamma}(x,y).
	\end{equation*}
	Therefore $\varphi$ is a $(1,2)$-quasi-isometry from the simplicial tree $\Gamma$ to the $\mathbb{R}$-tree $T$.
\end{proof}

We now combine \Cref{QuasiIsometry} and \Cref{Simplicial} to get the main result of this section.

\begin{thm}
	\label{SimplicialIff}
	A geodesic metric space $X$ is a quasi-tree if and only if it is $(1,C)$-quasi-isometric to a simplicial tree for some $C\geqslant 0$.
\end{thm}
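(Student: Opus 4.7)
The plan is to assemble the theorem from the two main results of this section, \Cref{QuasiIsometry} and \Cref{Simplicial}, together with \Cref{QuasiTransitive} for composing quasi-isometries. The reverse implication is essentially definitional, so all of the content lies in the forward direction.

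For the forward direction, let $X$ be a quasi-tree with bottleneck constant $\Delta$ and hyperbolicity constant $\delta$. I would first invoke \Cref{QuasiIsometry} to obtain that the end-approximating map $f\colon X\to T_X$ is a $(1,2(\Delta+2\delta))$-quasi-isometry onto the $\mathbb{R}$-tree $T_X$. I would then apply \Cref{Simplicial} to $T_X$ to produce a simplicial tree $\Gamma$ together with a $(1,2)$-quasi-isometry $\psi\colon T_X\to \Gamma$. Finally, \Cref{QuasiTransitive} gives that the composition $\psi\circ f\colon X\to\Gamma$ is a $(1,C)$-quasi-isometry with $C=2(\Delta+2\delta)+4$, exhibiting the required quasi-isometry from $X$ to a simplicial tree.

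For the reverse direction, suppose $X$ is a geodesic metric space that is $(1,C)$-quasi-isometric to some simplicial tree. Since any $(1,C)$-quasi-isometry is in particular a quasi-isometry, $X$ is quasi-isometric to a simplicial tree, and is therefore a quasi-tree directly from the definition.

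There is no genuine obstacle remaining at this stage, because the substantive work has already been done upstream. The decisive step was \Cref{Constant}, which used Manning's bottleneck criterion to replace the logarithmic error in \Cref{HyperbolicInequality} by a uniform additive constant; this uniform hyperbolic inequality is precisely what upgrades the end-approximating map from a coarse map to a $(1,C)$-quasi-isometry. The simplicial replacement carried out in \Cref{Simplicial} is then an essentially combinatorial construction. The only care needed here is in tracking constants through the composition, which \Cref{QuasiTransitive} handles cleanly.
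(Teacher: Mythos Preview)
Your proposal is correct and matches the paper's proof essentially line for line: the reverse direction is definitional, and the forward direction composes \Cref{QuasiIsometry} with \Cref{Simplicial} via \Cref{QuasiTransitive}, yielding the same constant $C=2(\Delta+2\delta)+4$.
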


\begin{proof}
	If a geodesic metric space $X$ is $(1,C)$-quasi-isometric to a simplicial tree, then it is a quasi-tree by definition. Now let $X$ be a quasi-tree. By \Cref{QuasiIsometry}, $X$ is $(1,C)$-quasi-isometric to the $\mathbb{R}$-tree $T_X$ for a constant $C=2(\Delta+2\delta)$, where $X$ is $\delta$-hyperbolic with bottleneck constant $\Delta$. By \Cref{Simplicial}, $T_X$ is $(1,2)$-quasi-isometric to some simplicial tree $\Gamma$,  so $X$ is $(1,C+4)$-quasi-isometric to $\Gamma$ by \Cref{QuasiTransitive}.
\end{proof}

\begin{rem}
	\label{AltReal}
	By \Cref{Simplicial}, we could equally define quasi-trees as being the geodesic metric spaces that are $(1,C)$-quasi-isometric to $\mathbb{R}$-trees.
\end{rem}

We can restate the above theorem in the following way.

\begin{cor}
	\label{QISimplicial}
	A geodesic metric space is $(L,C)$-quasi-isometric to a simplicial tree for some $L\geqslant 1, C\geqslant 0$ if and only if it is $(1,C')$-quasi-isometric to a simplicial tree for some $C'\geqslant 0$.
\end{cor}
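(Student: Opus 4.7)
The corollary is essentially just a repackaging of \Cref{SimplicialIff} in a form that emphasises the improvement of the multiplicative constant from $L$ to $1$, so the plan is to reduce it to \Cref{SimplicialIff} via the definition of a quasi-tree.

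For the forward direction, I would assume $X$ is $(L,C)$-quasi-isometric to some simplicial tree. A simplicial tree, endowed with the induced path metric from identifying each edge with $[0,1]$, is a geodesic metric space which is trivially a quasi-tree (it is $(1,0)$-quasi-isometric to itself). Hence $X$ is quasi-isometric to a simplicial tree, which is exactly the definition of being a quasi-tree. \Cref{SimplicialIff} then immediately gives a constant $C' \geqslant 0$ and a simplicial tree $\Gamma$ such that $X$ is $(1,C')$-quasi-isometric to $\Gamma$.

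For the backward direction, the statement is essentially trivial: a $(1,C')$-quasi-isometry is a fortiori an $(L,C)$-quasi-isometry with $L=1$ and $C=C'$, so no work is required.

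There is no real obstacle here; the work has all been done in \Cref{QuasiIsometry}, \Cref{Simplicial}, and \Cref{SimplicialIff}. The only minor thing to be careful about is noting explicitly that the hypothesis of \Cref{SimplicialIff} (namely that $X$ is a geodesic metric space which is a quasi-tree in the sense of the definition used in this paper) is met, which just uses the definition together with the fact that simplicial trees are themselves geodesic spaces quasi-isometric to simplicial trees.
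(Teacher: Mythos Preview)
Your proposal is correct and matches the paper's approach exactly: the paper presents this corollary as a direct restatement of \Cref{SimplicialIff} with no separate proof, and your argument simply spells out that restatement by invoking the definition of a quasi-tree for the forward direction and noting the backward direction is trivial.
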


\begin{rem}
	Note that these will not usually be the same simplicial tree. This is easily illustrated by the fact that there are simplicial trees that are $(L,C)$-quasi-isometric for some $L> 1, C\geqslant 0$, but not $(1,C')$-quasi-isometric for any $C'\geqslant 0$.
\end{rem}

\begin{rem}
	As with \Cref{AltReal}, we can easily apply \Cref{Simplicial} to substitute either or both of the simplicial trees in \Cref{QISimplicial} for an $\mathbb{R}$-tree. 
\end{rem}

The statement of \Cref{QISimplicial} was already known for quasi-isometries between graphs and $\mathbb{R}$, by a result of Manning.

\begin{lem}
	\emph{\cite{Manning2006}}
	\label{RealIff}
	A graph is $(L,C)$-quasi-isometric to $\mathbb{R}$ for some $L\geqslant 1, C\geqslant 0$ if and only if it is $(1,C')$-quasi-isometric to $\mathbb{R}$ for some $C'\geqslant 0$.
\end{lem}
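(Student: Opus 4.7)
The backward direction is immediate: any $(1,C')$-quasi-isometry is in particular an $(L,C')$-quasi-isometry. For the forward direction, the plan is first to use \Cref{SimplicialIff} to replace an arbitrary quasi-isometry with a $(1,C_1)$-quasi-isometry to some simplicial tree, and then to show that any simplicial tree quasi-isometric to $\mathbb{R}$ admits a $(1,D)$-quasi-isometry to $\mathbb{R}$; composition via \Cref{QuasiTransitive} finishes the argument.

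For the reduction, note that $\mathbb{R}$ is itself an $\mathbb{R}$-tree and hence a quasi-tree, so any graph $X$ quasi-isometric to $\mathbb{R}$ is a quasi-tree. \Cref{SimplicialIff} then gives a simplicial tree $\Gamma$ and a $(1,C_1)$-quasi-isometry $X \to \Gamma$. By transitivity of quasi-isometry $\Gamma$ is quasi-isometric to $\mathbb{R}$, so the problem reduces to producing a $(1,D)$-quasi-isometry $\Gamma \to \mathbb{R}$.

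To produce this, I would exploit the tree structure directly. Since the number of ends is a quasi-isometry invariant, $\Gamma$ has exactly two ends, and hence contains a unique bi-infinite geodesic $\ell$, which is isometric to $\mathbb{R}$. The key step is to show that $\Gamma$ lies in a bounded $R$-neighbourhood of $\ell$. For this, take any quasi-isometry $g:\mathbb{R}\to\Gamma$: its image is a bi-infinite quasi-geodesic in $\Gamma$ whose endpoints at infinity are precisely the two ends of $\Gamma$, and by stability of quasi-geodesics in trees this image lies at bounded Hausdorff distance from $\ell$. Coarse surjectivity of $g$ then forces $\Gamma$ itself to lie in a bounded neighbourhood of $\ell$. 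Once this is established, nearest-point projection $\pi:\Gamma\to\ell$ is a $(1,2R)$-quasi-isometry: in a tree the geodesic from $x$ to $y$ factors through $\pi(x)$ and $\pi(y)$, whenever these differ we have $d(x,y)=d(x,\pi(x))+d(\pi(x),\pi(y))+d(\pi(y),y)$, and in all cases $|d(x,y)-d(\pi(x),\pi(y))|\leqslant 2R$; surjectivity of $\pi$ is immediate.

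The main obstacle is the uniform bound on the Hausdorff distance between $\Gamma$ and $\ell$. Everything else is routine composition and tree arithmetic. This obstacle is relatively mild here because we work in a tree, where quasi-geodesic stability is very sharp, but it is the step that goes beyond the purely formal manipulations of quasi-isometries and requires the specific geometry of simplicial trees together with the classification of their ends as a quasi-isometry invariant.
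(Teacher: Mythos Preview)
The paper does not prove \Cref{RealIff}; it is quoted as a result of Manning \cite{Manning2006} and then used, together with \Cref{SimplicialIff}, to prove the generalisation \Cref{RealIffv2} to arbitrary geodesic metric spaces. Your proposal is therefore not a comparison with the paper's own proof, but rather an independent derivation of Manning's lemma from the machinery developed in this paper.

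That said, your argument is correct and is essentially the proof of \Cref{RealIffv2} with the black-box appeal to Manning replaced by a direct argument. The paper's proof of \Cref{RealIffv2} reduces via \Cref{SimplicialIff} to a simplicial tree $\Gamma$ quasi-isometric to $\mathbb{R}$, and then invokes \Cref{RealIff} to conclude that $\Gamma$ is $(1,C'')$-quasi-isometric to $\mathbb{R}$. You carry out that last step by hand: two ends, a unique bi-infinite geodesic line $\ell$, a uniform bound on the distance from $\Gamma$ to $\ell$ via Morse stability and coarse surjectivity, and nearest-point projection onto $\ell$. All of this is sound in a simplicial tree; in particular the uniqueness of $\ell$ and the formula $d(x,y)=d(x,\pi(x))+d(\pi(x),\pi(y))+d(\pi(y),y)$ when $\pi(x)\neq\pi(y)$ are exactly what $0$-hyperbolicity gives you.

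One remark on logical dependence: your route relies on \Cref{SimplicialIff}, which is the paper's main theorem and comes after Manning's work chronologically. There is no circularity within the paper, since \Cref{SimplicialIff} does not use \Cref{RealIff}, but you should be aware that this is not how Manning originally proved the statement. What your argument buys is a self-contained treatment inside the paper's framework; what it costs is that the ``elementary'' lemma now rests on the end-approximating tree construction.
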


One consequence of \Cref{SimplicialIff} is that it allows us to get rid of the requirement in Manning's result that the space in question is a graph.

\begin{cor}
	\label{RealIffv2}
	A geodesic metric space is $(L,C)$-quasi-isometric to $\mathbb{R}$ for some $L\geqslant 1, C\geqslant 0$ if and only if it is $(1,C')$-quasi-isometric to $\mathbb{R}$ for some $C'\geqslant 0$.
\end{cor}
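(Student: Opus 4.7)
The backward direction is trivial: a $(1,C')$-quasi-isometry is in particular an $(L,C)$-quasi-isometry. So the plan is to handle the forward direction.

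Suppose $X$ is a geodesic metric space that is $(L,C)$-quasi-isometric to $\mathbb{R}$. The first step is to observe that $\mathbb{R}$ is quasi-isometric to a simplicial tree (e.g.\ $\mathbb{Z}$ with edges between consecutive integers), so $X$ is quasi-isometric to a simplicial tree and hence is a quasi-tree. By \Cref{SimplicialIff}, there is a simplicial tree $\Gamma$ and a constant $C_1\geqslant 0$ such that $X$ is $(1,C_1)$-quasi-isometric to $\Gamma$.

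The second step is to transfer Manning's \Cref{RealIff} to $\Gamma$. Since $X$ is quasi-isometric to both $\mathbb{R}$ and $\Gamma$, by transitivity of quasi-isometry $\Gamma$ is quasi-isometric to $\mathbb{R}$. As $\Gamma$ is a graph, \Cref{RealIff} applies and yields a $(1,C_2)$-quasi-isometry $\Gamma\to\mathbb{R}$ for some $C_2\geqslant 0$. Composing the $(1,C_1)$-quasi-isometry $X\to\Gamma$ with this $(1,C_2)$-quasi-isometry via \Cref{QuasiTransitive} produces a $(1,C_1+2C_2)$-quasi-isometry $X\to\mathbb{R}$, completing the proof.

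There is no real obstacle here, as the corollary is a clean combination of \Cref{SimplicialIff} (to replace a general quasi-isometry with a $(1,C)$-quasi-isometry at the cost of changing the tree) with Manning's pre-existing result \Cref{RealIff} (which handles the case of $\mathbb{R}$ but only for graphs). The only thing to be slightly careful about is to note that the simplicial tree $\Gamma$ produced by \Cref{SimplicialIff} is genuinely a graph and therefore fits the hypothesis of \Cref{RealIff}, and that quasi-isometries compose to give a $(1,C')$-quasi-isometry via \Cref{QuasiTransitive}.
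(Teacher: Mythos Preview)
Your proof is correct and follows essentially the same approach as the paper's own proof: use \Cref{SimplicialIff} to obtain a $(1,C_1)$-quasi-isometry to a simplicial tree $\Gamma$, note that $\Gamma$ is quasi-isometric to $\mathbb{R}$, apply Manning's \Cref{RealIff} to $\Gamma$, and compose via \Cref{QuasiTransitive}. The only difference is cosmetic---you spell out explicitly why $X$ is a quasi-tree and why $\Gamma$ is a graph, whereas the paper leaves these implicit.
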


\begin{proof}
	If a geodesic metric space $X$ is $(L,C)$-quasi-isometric to $\mathbb{R}$, then it is a quasi-tree, so by \Cref{SimplicialIff}, $X$ is $(1,C')$-quasi-isometric to a simplicial tree. This simplicial tree is quasi-isometric to $\mathbb{R}$, so by \Cref{RealIff}, it is $(1,C'')$-quasi-isometric to $\mathbb{R}$. Therefore $X$ is $(1,C'+2C'')$-quasi-isometric to $\mathbb{R}$ by \Cref{QuasiTransitive}.
\end{proof}

\subsection{Quasi-actions}

We note here that the improvement from \Cref{SimplicialIff} can also be applied to groups that quasi-act on simplicial trees.

\begin{defn}
	Let $(X,d)$ be a metric space, and let $G$ be a group. A map $A:G\times X\to X$ is an \emph{$(L,C)$-quasi-action} of $G$ on $X$ if there exist constants $L\geqslant 1$, $C\geqslant 0$ such that:
	\begin{enumerate}[(1)]
		\item For every $g\in G$, we have that the map $A(g,-):X\to X$ is an $(L,C)$-quasi-isometry.
		\item For every $g,h\in G$ and $x\in X$, we have that $d(A(g,A(h,x)),A(gh,x))\leqslant C$.
	\end{enumerate}
	A quasi-action is \emph{cobounded} if there exists a constant $C'\geqslant 0$ such that for every $x,y\in X$ there exists some $g\in G$ such that $d(A(g,x),y)\leqslant C'$.
\end{defn}

A group having a cobounded quasi-action on a simplicial tree is a trivial property, as we could consider the isometric action of any group on the tree which consists of a single vertex. To obtain a non-trivial statement, we therefore need a way of comparing quasi-actions.

\begin{defn}
	Let $(X,d_X)$ and $(Y,d_Y)$ be metric spaces, and let $A_X:G\times X\to X$ and $A_Y:G\times Y\to Y$ be quasi-actions. A map $f:X\to Y$ is \emph{coarsely equivariant} with respect to these quasi-actions if there exists $C\geqslant 0$ such that for every $x\in X$ and $g\in G$ we have that $d_Y(f(A_X(g,x)),A_Y(g,f(x)))\leqslant C$.
	
	If $f$ is also a quasi-isometry, then we call $f$ a \emph{quasi-conjugacy}. If there exists a quasi-conjugacy with respect to a pair of quasi-actions, then we say those quasi-actions are \emph{quasi-conjugate}.
\end{defn}

The following standard results tell us that quasi-actions being quasi-conjugate is an equivalence relation.

\begin{lem}
	\label{InverseQuasiConjugacy}
	Let $(X,d_X)$ and $(Y,d_Y)$ be metric spaces, let $A_X:G\times X\to X$ and $A_Y:G\times Y\to Y$ be quasi-actions, and let $f:X\to Y$ be a quasi-conjugacy with respect to these quasi-actions. Then the quasi-inverse given by \Cref{QuasiInverse} is also a quasi-conjugacy with respect to these quasi-actions.
\end{lem}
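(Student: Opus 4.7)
The proof is a routine coarse-geometry bookkeeping exercise, so the plan is to chain together three facts: (i) $f\circ g$ is uniformly close to $\mathrm{id}_Y$ and $g\circ f$ is uniformly close to $\mathrm{id}_X$ (from \Cref{QuasiInverse}), (ii) $f$ is coarsely equivariant with some constant, and (iii) each $A_Y(h,-)$ and each $A_X(h,-)$ is quasi-Lipschitz, with constants uniform in $h$.

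Concretely, I would fix constants $L\geqslant 1$ and $C_0\geqslant 0$ large enough that the following all hold simultaneously: $f$ and $g$ are $(L,C_0)$-quasi-isometries; $d_X(g(f(x)),x)\leqslant C_0$ for all $x\in X$ and $d_Y(f(g(y)),y)\leqslant C_0$ for all $y\in Y$; every map $A_X(h,-)$ and $A_Y(h,-)$ is an $(L,C_0)$-quasi-isometry; and $d_Y(f(A_X(h,x)),A_Y(h,f(x)))\leqslant C_0$ for all $x\in X$, $h\in G$. Then, given $y\in Y$ and $h\in G$, I set $x=g(y)$ and estimate $d_X(g(A_Y(h,y)),A_X(h,g(y)))$ by inserting the two auxiliary points $g(A_Y(h,f(x)))$ and $g(f(A_X(h,x)))$ and applying the triangle inequality.

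The three resulting terms are bounded as follows. First, by coarse equivariance of $f$ together with the upper Lipschitz estimate for $g$,
\begin{equation*}
d_X\bigl(g(A_Y(h,f(x))),g(f(A_X(h,x)))\bigr)\leqslant L\cdot C_0 + C_0.
\end{equation*}
Second, $g(f(A_X(h,x)))$ lies within $C_0$ of $A_X(h,x)=A_X(h,g(y))$. Third, since $d_Y(f(x),y)\leqslant C_0$ and $A_Y(h,-)$ is $(L,C_0)$-quasi-Lipschitz, we have $d_Y(A_Y(h,f(x)),A_Y(h,y))\leqslant L C_0+C_0$, and applying the upper Lipschitz bound for $g$ once more controls $d_X(g(A_Y(h,f(x))),g(A_Y(h,y)))$ by $L(L C_0+C_0)+C_0$. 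Summing the three contributions gives a uniform bound $C$ depending only on $L$ and $C_0$, which establishes coarse equivariance of $g$. Since $g$ is already a quasi-isometry by construction, it is a quasi-conjugacy.

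There is no real obstacle here beyond keeping the constants straight; the only subtlety is remembering that the Lipschitz-type step for $g$ produces a multiplicative factor of $L$ each time it is invoked, so one should minimise how many times $g$ is applied to differences in $Y$. Choosing the intermediate points as above ensures $g$ is used only twice, which keeps the final constant tidy.
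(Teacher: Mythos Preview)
Your argument is correct: the three-term triangle inequality with intermediate points $g(A_Y(h,f(x)))$ and $g(f(A_X(h,x)))$ does the job, and your bounds on each term are sound. The paper itself omits the proof entirely, labelling the lemma a ``standard result'' and stating it without argument, so there is nothing to compare against; your write-up simply fills in a detail the paper leaves to the reader.
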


\begin{lem}
	\label{TranstitiveQuasiConjugacy}
	Let $(X,d_X)$, $(Y,d_Y)$, and $(Z,d_Z)$ be metric spaces, and let $A_X:G\times X\to X$, $A_Y:G\times Y\to Y$, and $A_Z:G\times Z\to Z$ be quasi-actions. Let $f:X\to Y$ be a quasi-conjugacy with respect to $A_X$ and $A_Y$, and let $h:Y\to Z$ be a quasi-conjugacy with respect to $A_Y$ and $A_Z$. Then $h\circ f:X\to Z$ is a quasi-conjugacy with respect to $A_X$ and $A_Z$.
\end{lem}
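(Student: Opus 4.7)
The plan is a routine triangle-inequality argument, using the coarse equivariance of each of the two quasi-conjugacies together with the coarse Lipschitz property of the outer map.

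First I would observe that $h \circ f : X \to Z$ is a quasi-isometry: the composition of two quasi-isometries is a quasi-isometry by a standard argument (this is implicit in the paper, e.g.\ the version \Cref{QuasiTransitive} handles the $(1,C)$ case, and the same kind of computation works in general). So the only content of the lemma is verifying coarse equivariance of $h \circ f$ with respect to $A_X$ and $A_Z$.

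Fix constants: let $C_f$ be the equivariance constant for $f$, so that $d_Y(f(A_X(g,x)), A_Y(g, f(x))) \leqslant C_f$ for all $g \in G$ and $x \in X$, let $C_h$ be the equivariance constant for $h$, and let $(L,C)$ be such that $h$ is an $(L,C)$-quasi-isometry. Then for any $g \in G$ and $x \in X$, by the triangle inequality in $Z$,
\begin{align*}
d_Z\bigl(h(f(A_X(g,x))),\, A_Z(g, h(f(x)))\bigr)
&\leqslant d_Z\bigl(h(f(A_X(g,x))),\, h(A_Y(g, f(x)))\bigr) \\
&\quad + d_Z\bigl(h(A_Y(g, f(x))),\, A_Z(g, h(f(x)))\bigr).
\end{align*}
The second term is at most $C_h$ by coarse equivariance of $h$. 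The first term is bounded above by $L \cdot d_Y(f(A_X(g,x)), A_Y(g, f(x))) + C \leqslant L C_f + C$ because $h$ is coarsely Lipschitz. Hence the whole expression is bounded by $L C_f + C + C_h$, a constant independent of $g$ and $x$, which is the required coarse equivariance of $h \circ f$.

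There is essentially no obstacle here; the only thing to be mildly careful about is that the equivariance defect of $f$ gets inflated by the multiplicative constant of $h$, so one must use the full $(L,C)$ coarse-Lipschitz bound rather than a rough isometry bound. With that in hand, combining the two estimates produces the claim.
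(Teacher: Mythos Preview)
Your argument is correct and is exactly the standard proof; the paper itself states this lemma as a ``standard result'' and omits the proof entirely, so there is nothing to compare against.
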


It is a result of Manning that every finitely generated group that has a cobounded quasi-action on a simplicial tree also admits a quasi-conjugate isometric action on a quasi-tree.

\begin{prop}
	\label{QuasiActImplies}
	\emph{\cite{Manning2006}}
	Let $G$ be a finitely generated group that has a cobounded quasi-action on a simplicial tree $\Gamma$. There exists a (possibly infinite) generating set $S$ for $G$ such that the Cayley graph $\text{Cay}(G,S)$ is quasi-isometric to $\Gamma$, and the isometric action of $G$ on $\text{Cay}(G,S)$ is quasi-conjugate to the quasi-action of $G$ on $\Gamma$.
\end{prop}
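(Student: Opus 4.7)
The plan is to build the generating set $S$ directly from the quasi-action, and then exhibit the orbit map as the required quasi-conjugacy.

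Fix a basepoint $v_0\in\Gamma$ and define the orbit map $\Phi\colon G\to \Gamma$ by $\Phi(g)=A(g,v_0)$. The second quasi-action axiom gives $d_\Gamma(\Phi(gh),A(g,\Phi(h)))\leqslant C$ for all $g,h\in G$, so $\Phi$ is coarsely equivariant with respect to left multiplication on $G$ and the quasi-action on $\Gamma$. Since $A$ is cobounded, there is a constant $C'$ such that $\Phi(G)$ is $C'$-dense in $\Gamma$.

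Pick a constant $R$ substantially larger than the quasi-action constants $L,C$ and the coboundedness constant $C'$ (in particular, large enough that anything I say below works), and set
\begin{equation*}
S=\{g\in G:d_\Gamma(v_0,A(g,v_0))\leqslant R\}.
\end{equation*}
The set $S$ is symmetric and contains the identity. The first step is to verify that $S$ generates $G$. Given any $h\in G$, one takes a geodesic from $v_0$ to $\Phi(h)$ in $\Gamma$, samples points along it at spacing at most $R/2$, then uses coboundedness to replace each sample point by a nearby orbit point $\Phi(g_i)$. The consecutive ratios $g_i^{-1}g_{i+1}$ then move $v_0$ a bounded distance, and by enlarging $R$ if necessary they lie in $S$; this writes $h$ as a product of elements of $S$.

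Next I would show that $\Phi\colon\mathrm{Cay}(G,S)\to\Gamma$ is a quasi-isometry. The upper bound is immediate: each generator moves $v_0$ by at most $R$ in $\Gamma$, and applying the quasi-action axioms along a word in $S$ of length $n$ yields $d_\Gamma(v_0,\Phi(h))\leqslant Ln R+(n+1)C$ or similar. For the lower bound, the same sampling argument as in the previous paragraph shows any $h\in G$ can be reached from $e$ in $\mathrm{Cay}(G,S)$ by a word of length at most a constant times $d_\Gamma(v_0,\Phi(h))$, so $d_S(e,h)\leqslant \alpha\, d_\Gamma(v_0,\Phi(h))+\beta$ for suitable constants; translating by the left-invariance of $d_S$ gives the required coarsely bi-Lipschitz bound for all pairs. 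Coarse surjectivity of $\Phi$ is precisely coboundedness. Combined with the earlier coarse equivariance, $\Phi$ is then a quasi-conjugacy between the left-multiplication action on $\mathrm{Cay}(G,S)$ and the quasi-action $A$.

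The main obstacle is calibrating the constant $R$ and the sampling in the generation/lower-bound step: we need $R$ simultaneously large enough that $S$ generates $G$ and that sampled orbit points along a geodesic are consecutively in $S$, but the argument still has to give a linear (not exponential) lower bound on $d_S$ in terms of $d_\Gamma$. Everything else (coarse equivariance, the upper Lipschitz bound, coarse density) is a bookkeeping exercise using the defining inequalities for quasi-actions.
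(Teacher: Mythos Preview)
The paper does not actually prove this proposition; it is quoted from \cite{Manning2006} without proof and used as a black box in the subsequent argument for \Cref{QuasiAct}. So there is no ``paper's own proof'' to compare against.

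That said, your outline is the standard quasi-Milnor--\v{S}varc argument and is essentially correct. A couple of small points to tighten. First, your stated upper bound $d_\Gamma(v_0,\Phi(h))\leqslant LnR+(n+1)C$ is the right shape but the cleanest derivation is to bound the single-step displacement: for $s\in S$ one has
\[
d_\Gamma(\Phi(g),\Phi(gs))\leqslant d_\Gamma(A(g,v_0),A(g,A(s,v_0)))+d_\Gamma(A(g,A(s,v_0)),A(gs,v_0))\leqslant (LR+C)+C,
\]
which immediately gives a genuinely linear bound and avoids any risk of an exponential blow-up from iterating $(L,C)$-quasi-isometries in the wrong order. Second, your worry about $R$ in the last paragraph is slightly overstated: once $R$ is chosen large enough that the sampled consecutive ratios $g_i^{-1}g_{i+1}$ land in $S$ (say $R\geqslant 2L(C'+C)+\text{slack}$), the lower bound on $d_S$ is automatically linear in $d_\Gamma$, since the number of sample points is linear in the length of the geodesic. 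There is no tension between ``$S$ generates'' and ``linear lower bound''; both follow from the same sampling with the same $R$.
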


We therefore obtain the following.

\begin{prop}
	\label{QuasiAct}
	If a finitely generated group admits a cobounded $(L,C)$-quasi-action on a simplicial tree for some $L\geqslant 1, C\geqslant 0$, then it admits a quasi-conjugate $(1,C')$-quasi-action on a simplicial tree for some $C'\geqslant 0$.
\end{prop}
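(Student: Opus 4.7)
The plan is to combine Manning's result (\Cref{QuasiActImplies}) with the main theorem of this section (\Cref{SimplicialIff}), using the fact that conjugating an isometric action by a $(1,C)$-quasi-isometry produces a $(1,C')$-quasi-action.

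First I would apply \Cref{QuasiActImplies} to the given cobounded $(L,C)$-quasi-action on the simplicial tree $\Gamma$. This yields a (possibly infinite) generating set $S$ of $G$ such that $\text{Cay}(G,S)$ is quasi-isometric to $\Gamma$, and the standard left-multiplication action of $G$ on $\text{Cay}(G,S)$ is isometric and quasi-conjugate to the original quasi-action on $\Gamma$. Since $\Gamma$ is a simplicial tree, $\text{Cay}(G,S)$ is a quasi-tree. By \Cref{SimplicialIff}, there exists a simplicial tree $T$ and a $(1,C_1)$-quasi-isometry $f:\text{Cay}(G,S)\to T$, with quasi-inverse $g:T\to \text{Cay}(G,S)$ that is a $(1,3C_1)$-quasi-isometry by \Cref{QuasiInverse}.

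Next I would define a quasi-action $A_T:G\times T\to T$ by pulling back the isometric action $A_C$ on $\text{Cay}(G,S)$ through $f$, namely
\begin{equation*}
A_T(h,t)=f(A_C(h,g(t))).
\end{equation*}
For each fixed $h\in G$, the map $A_T(h,-)$ is the composition $f\circ A_C(h,-)\circ g$ of a $(1,3C_1)$-quasi-isometry, an isometry, and a $(1,C_1)$-quasi-isometry, hence by two applications of \Cref{QuasiTransitive} it is a $(1,C')$-quasi-isometry for some uniform $C'$. The quasi-action identity $d^*(A_T(h_1,A_T(h_2,t)),A_T(h_1 h_2,t))\leqslant C'$ follows because $g\circ f$ moves points of $\text{Cay}(G,S)$ by at most some uniform constant, $A_C$ is an honest action (so associativity holds exactly in $\text{Cay}(G,S)$), and $f$ distorts distances by at most $C_1$; the constant $C'$ can be enlarged if necessary to absorb these errors. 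Coboundedness transfers from the isometric action on $\text{Cay}(G,S)$ to $A_T$ via $f$, again using only the additive distortion of $f$.

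Finally, the map $f$ itself is a quasi-conjugacy from the isometric action on $\text{Cay}(G,S)$ to $A_T$, essentially by the very definition of $A_T$: for any $x\in\text{Cay}(G,S)$ and $h\in G$, $f(A_C(h,x))$ differs from $A_T(h,f(x))=f(A_C(h,g(f(x))))$ by an amount controlled by how close $g\circ f$ is to the identity and the additive constant of $f$. Composing with the quasi-conjugacy from \Cref{QuasiActImplies} and applying \Cref{TranstitiveQuasiConjugacy} shows that $A_T$ is quasi-conjugate to the original quasi-action on $\Gamma$.

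The only real obstacle is verifying that the constants behave as claimed: the associativity relation for $A_T$ involves several compositions of $f$ and $g$, and one must check that no multiplicative factor greater than $1$ is introduced. This works precisely because $f$ and $g$ have multiplicative constant $1$, so \Cref{QuasiTransitive} applies at every step and keeps the multiplicative constant equal to $1$ throughout; all the errors are additive and can be absorbed into a single constant $C'$.
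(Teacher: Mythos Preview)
Your proposal is correct and follows essentially the same approach as the paper: apply \Cref{QuasiActImplies} to get an isometric action on a quasi-tree, use \Cref{SimplicialIff} to obtain a $(1,C_1)$-quasi-isometry to a simplicial tree, and conjugate through this quasi-isometry and its quasi-inverse to produce the $(1,C')$-quasi-action, checking the quasi-action axioms and quasi-conjugacy exactly as you outline. The paper carries out the same computations with explicit constants (obtaining a $(1,5C')$-quasi-action), but the structure of the argument is identical.
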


\begin{proof}
	Suppose that $G$ is a finitely generated group which has a cobounded quasi-action on a simplicial tree. Then by \Cref{QuasiActImplies}, there exists a generating set $S$ of $G$ such that $Cay(G,S)$ is a quasi-tree. By \Cref{SimplicialIff}, we have that $Cay(G,S)$ is $(1,C')$-quasi-isometric to some simplicial tree $\Gamma$. We want to show that the isometric action of $G$ on $Cay(G,S)$ induces a $(1,5C')$-quasi-action on $\Gamma$. Let $d$ be the metric on $Cay(G,S)$, and $d_{\Gamma}$ be the metric on $\Gamma$.
	
	Let $\varphi:Cay(G,S)\to\Gamma$ be a $(1,C')$-quasi-isometry, and let $\psi:\Gamma\to Cay(G,S)$ be the $(1,3C')$-quasi-isometry given by \Cref{QuasiInverse}. For every $g\in G$, we have that the map $\varphi\circ g\circ\psi:\Gamma\to\Gamma$ is a $(1,5C')$-quasi-isometry by \Cref{QuasiTransitive}.
	
	For $x\in\Gamma$ let $A(g,x)=\varphi\circ g\circ\psi(x)$. For every $g,h\in G$ and $x\in X$, we have that
	\begin{align*}
		d_{\Gamma}(A(g,A(h,x)),A(gh,x)) & = d_{\Gamma}(\varphi\circ g\circ\psi\circ\varphi\circ h\circ\psi(x),\varphi\circ g\circ h\circ\psi(x))
		\\ & \leqslant d(\psi\circ\varphi\circ h\circ\psi(x),h\circ\psi(x))+C'.
	\end{align*}
	By the definition of $\psi$ from \Cref{QuasiInverse}, we have that $d_{\Gamma}(\varphi(\psi(y)),y)\leqslant C'$ for every $y\in \Gamma$, so 
	\begin{equation*}
		d(\varphi\circ\psi\circ\varphi\circ h\circ\psi(x),\varphi\circ h\circ\psi(x))\leqslant C',
	\end{equation*}
	and therefore
	\begin{equation*}
		d(\psi\circ\varphi\circ h\circ\psi(x),h\circ\psi(x))\leqslant 2C'.
	\end{equation*}
	We conclude that 
	\begin{equation*}
		d_{\Gamma}(A(g,A(h,x)),A(gh,x))\leqslant 3C',
	\end{equation*}
	and so this is a $(1,5C')$-quasi-action.
	
	We now want to check that this quasi-action is quasi-conjugate to the isometric action of $G$ on $\text{Cay}(G,S)$. We already have that $\varphi:Cay(G,S)\to\Gamma$ is a quasi-isometry, and we can also see that
	\begin{align*}
		d_{\Gamma}(\varphi\circ g(x),A(g,\varphi(x))) & = d_{\Gamma}(\varphi\circ g(x),\varphi\circ g\circ\psi\circ\varphi(x))
		\\ & \leqslant d(x,\psi\circ\varphi(x))+C'
		\\ & \leqslant d_{\Gamma}(\varphi(x),\varphi\circ\psi\circ\varphi(x))+2C'
		\\ & \leqslant 3C'.
	\end{align*}
	Hence this is a quasi-conjugacy. \Cref{QuasiActImplies} tells us that the isometric action of $G$ on $\text{Cay}(G,S)$ is quasi-conjugate to the $(L,C)$-quasi-action on the original simplicial tree, so by \Cref{TranstitiveQuasiConjugacy}, we have that these are both quasi-conjugate to the $(1,5C')$-quasi-action of $G$ on $\Gamma$.
\end{proof}

As these actions are quasi-conjugate, it is well known that several properties of the original quasi-action will be inherited by the $(1,C')$-quasi-action. For instance, the $(1,C')$-quasi-action will also be cobounded. There is also a notion of individual group elements quasi-acting elliptically or loxodromically \cite{Manning2006}, which is also preserved under quasi-conjugacy.

\subsection{Countable and locally finite trees}

We note that the definition of a simplicial tree allows a vertex to have uncountably many neighbours. We here give one criteria for when an $\mathbb{R}$-tree is $(1,C)$-quasi-isometric to a countable or locally finite simplicial tree, which will allow us to obtain analogues of \Cref{QISimplicial} for countable and locally finite simplicial trees. We will also prove that every proper quasi-tree is $(1,C)$-quasi-isometric to a locally finite simplicial tree, which will be used at the end of Section 6.2.

\begin{prop}
	\label{CountableFiniteBackwards}
	Given an $\mathbb{R}$-tree $T$, we have the following:
	\begin{enumerate}[(1)]
		\item  If for some $r\geqslant 1$, there exists $R\geqslant0$ such that for every $x\in T$, the space $T\backslash B(x,r)$ has countably many connected components of diameter at least $R$, then $T$ is $(1,C)$-quasi-isometric to a countable simplicial tree.
		\item If for some $r\geqslant 1$, there exists $R\geqslant0$ such that for every $x\in T$, the space $T\backslash B(x,r)$ has finitely many connected components of diameter at least $R$, then $T$ is $(1,C)$-quasi-isometric to a locally finite simplicial tree.
	\end{enumerate}
\end{prop}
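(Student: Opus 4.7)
Fix a basepoint $x_0 \in T$ and take $r \geq 1$, $R \geq 0$ from the hypothesis. Call a connected component $U$ of $T \setminus B(x_0, kr)$ \emph{large} if $\operatorname{diam}(U) \geq R$. Since $T$ is an $\mathbb{R}$-tree, every large component at level $k \geq 1$ is contained in a unique large component at level $k-1$, so the large components form a rooted tree under inclusion, and each large $U$ at level $k$ has a unique ``entry point'' $y_U \in S(x_0, kr)$ satisfying $d(x_0, p) = kr + d(y_U, p)$ for all $p \in U$. The first key step is to prove by induction on $k$ that the set of large components at level $k$ is countable (respectively finite): the base case $k = 1$ is the hypothesis applied at $x_0$, and for the inductive step the large components of $T \setminus B(x_0, kr)$ inside a given large parent $U$ coincide, via the entry-point identity, with the large components of $T \setminus B(y_U, r)$ lying above $y_U$, which the hypothesis at $y_U$ bounds.

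Next I would build $\Gamma$ by placing a main vertex $v_{x_0}$ at depth $0$ and a main vertex $v_U$ at depth $\lfloor kr \rfloor$ for each large $U$ at level $k$, joining $v_U$ to the main vertex of its parent by a simplicial path of length $n_k := \lfloor kr \rfloor - \lfloor (k-1)r \rfloor \geq 1$, subdivided by $n_k - 1$ intermediate degree-$2$ vertices. Then $\Gamma$ is a simplicial tree with $d_\Gamma(v_{x_0}, v_U) = \lfloor kr \rfloor$. Countability in case (1) follows since $\Gamma$ has countably many main vertices and only finitely many intermediate vertices on each connecting path; in case (2), each main vertex has one parent-side neighbour and only finitely many children by the inductive counting step, and each intermediate vertex has degree $2$, so $\Gamma$ is locally finite.

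Finally, define $\psi : T \to \Gamma$ by $\psi(x) = v_U$, where $U$ is the deepest-level large component containing $x$, and $\psi(x) = v_{x_0}$ if no such $U$ exists. Writing $K = K(x)$ for that level, the point $x$ must lie in a \emph{small} component at level $K+1$ (or in no level-$(K+1)$ component at all), which forces $Kr < d_T(x_0, x) < (K+1)r + R$ and hence $\lvert d_T(x_0, x) - d_\Gamma(v_{x_0}, \psi(x)) \rvert \leq r + R + 1$. Combining this with the identity $d(a,b) = d(x_0, a) + d(x_0, b) - 2(a,b)_{x_0}$ in both trees and a short case analysis according to whether the meet of $x$ and $y$ in $T$ sits at a level reached by both $\psi(x)$ and $\psi(y)$ in $\Gamma$ yields the two-point bound $\lvert d_T(x,y) - d_\Gamma(\psi(x), \psi(y)) \rvert \leq C$ for a constant $C = C(r, R)$; coarse surjectivity is clear from choosing $\psi$-preimages just beyond each entry point $y_U$. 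The main obstacle is the non-integer scaling when $r > 1$: a uniform subdivision into $\lceil r \rceil$ pieces per edge would accumulate an error of $k(\lceil r \rceil - r)$ that grows without bound in the depth $k$, so the calibrated subdivision $n_k = \lfloor kr \rfloor - \lfloor (k-1)r \rfloor$ is precisely what keeps $\Gamma$-distances within $1$ of their $T$-counterparts at every scale and so secures a genuine $(1,C)$-quasi-isometry rather than an $(L,C)$-one with $L > 1$.
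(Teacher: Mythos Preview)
Your approach is correct and genuinely different from the paper's. The paper does not build a new tree from scratch: it recycles the simplicial tree $\Gamma$ already produced in \Cref{SimplicialReverse} (which is $(1,2)$-quasi-isometric to $T$ but not yet countable or locally finite), and then \emph{prunes} it by iteratively deleting leaves $n=\lceil r+R+4\rceil$ times. The resulting subtree $\Gamma'$ is automatically $(1,2n)$-quasi-isometric to $\Gamma$, so the quasi-isometry estimate comes for free; all the work goes into showing that in $\Gamma'$ every vertex has only countably (resp.\ finitely) many neighbours, by mapping distinct surviving neighbours to distinct large components of $T\setminus B(\varphi(v),r)$. By contrast, you construct the simplicial tree directly from the nested system of large components of $T\setminus B(x_0,kr)$ and must then verify the $(1,C)$-quasi-isometry by hand. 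Your route gives a very transparent description of the target tree and makes the countability/local finiteness immediate by induction on the level; the paper's route is slicker because it offloads the metric estimate entirely onto the already-established \Cref{SimplicialReverse} and needs only a single combinatorial argument at one vertex. One small point: your ``short case analysis'' for the two-point bound is doing real work (controlling the Gromov product of $\psi(x),\psi(y)$ by showing the meet in $T$ lies between levels $k^*$ and $k^*+1$), and you should spell it out; with that done, the argument is complete.
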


\begin{proof}
	We prove only the first part, as the proof of the second part is analogous. Suppose that $T$ is an $\mathbb{R}$-tree for which for some $r\geqslant 1$ there exists $R\geqslant0$ such that for every $x\in T$ the space $T\backslash B(x,r)$ has countably many connected components of diameter at least $R$. By \Cref{SimplicialReverse}, we know that there exists a simplicial tree $\Gamma$ with a $(1,2)$-quasi-isometry $\varphi:\Gamma\to T$. Here we take $\Gamma$ and $\varphi$ to be exactly the simplicial tree and quasi-isometry that we constructed in that proposition. Let $\Gamma'$ be obtained from $\Gamma$ by iteratively removing all leaves $n=\lceil r+R+4\rceil$ times. We then have that $\Gamma'$ is a simplicial tree that is $(1,2n)$-quasi-isometric to $\Gamma$, and so is $(1,2n+4)$-quasi-isometric to $T$ by \Cref{QuasiTransitive}. We want to show that $\Gamma'$ is countable.
	
	Let $v\in\Gamma'$ be a vertex. It suffices to show that its set of neighbours $S(v,1)$ in $\Gamma'$ is countable. Let $w\in S(v,1)$, then $w$ is also a neighbour of $v$ in $\Gamma$, and moreover there exists $z_w\in\Gamma$ such that $d_{\Gamma}(v,z_w)=n$ and $w\in[v,z_w]$.
	
	Recall that $\Gamma$ is constructed with respect to a basepoint $x_0\in T$, with $v_{x_0}$ being the corresponding vertex in $\Gamma$ such that $\varphi(v_{x_0})=x_0$. There is at most one $\widehat{w}\in S(v,1)$ such that $\widehat{w}\in[v_{x_0},v]$ in $\Gamma$, so $S(v,1)$ is countable if and only if $S(v,1)\backslash \{\widehat{w}\}$ is countable.
	
	Note that if $w,w'\in S(v,1)\backslash \{\widehat{w}\}$ and $w\neq w'$, then $z_w\neq z_{w'}$ as $\Gamma$ does not contain any cycles. We can also see that $d_{\Gamma}(z_w,z_{w'})=2n$, so $d_T(\varphi(z_w),\varphi(z_{w'}))\geqslant 2n-2$. As $v\in[v_{x_0},z_w]$, by the construction of $\varphi$ in \Cref{SimplicialReverse}, we get that $d_T(\varphi(v),\varphi(z_{w}))=n$. Similarly, $d_T(\varphi(v),\varphi(z_{w'}))=n$. We therefore get that
	\begin{align*}
	(\varphi(z_w),\varphi(z_{w'}))_{\varphi(v)} \leqslant \frac{1}{2}(2n-(2n-2))=1,
	\end{align*}
	so $\varphi(z_w)$ and $\varphi(z_{w'})$ lie in different connected components of $T\backslash B(\varphi(v),1)$. As $n\geqslant r$ and $r\geqslant 1$, we also have that $\varphi(z_w)$ and $\varphi(z_{w'})$ must lie in different connected components of $T\backslash B(\varphi(v),r)$. Note that each such connected component must have diameter at least $d_T(\varphi(v),\varphi(z_{w}))-1\geqslant n-3>R$. 
	
	By our initial assumption, there are at most countably many such connected components, so there are at most countably many $\varphi(z_w)$. As $w\neq w'$ implies that $\varphi(z_w)\neq \varphi(z_{w'})$, there are at most countably many $w\in S(v,1)\backslash \{\widehat{w}\}$, and so at most countably many $w\in S(v,1)$. Hence $\Gamma'$ is a countable simplicial tree, which is $(1,2n+4)$-quasi-isometric to $T$.
\end{proof}

\begin{rem}
	This is not true if we let $r=0$. As an example, consider the comb space $X$ formed by taking copies $X_n$ of $[0,\infty)$ indexed by $n\in\mathbb{Z}_{\geqslant 0}$, then attaching them to $[0,1]$ by identifying 0 in $X_0$ with 0 in $[0,1]$, and 0 in $X_n$ with $\frac{1}{n}$ in $[0,1]$ for $n\in\mathbb{N}$. This is an $\mathbb{R}$-tree, and for any $x\in X$, we can see that $X\backslash\{x\}$ has at most three connected components, however by \Cref{CountableFiniteForwards} below, we can see that that $X$ is not $(L,C)$-quasi-isometric to any locally finite simplicial tree.
\end{rem}

\begin{prop}
	\label{CountableFiniteForwards}
	Given an $\mathbb{R}$-tree $T$, we have the following:
	\begin{enumerate}[(1)]
		\item If $T$ is $(L,C)$-quasi-isometric to a countable simplicial tree, then for every $r\geqslant 0$, there exists $R\geqslant0$ such that for every $x\in T$, the space $T\backslash B(x,r)$ has countably many connected components of diameter at least $R$.
		\item If $T$ is $(L,C)$-quasi-isometric to a locally finite simplicial tree, then for every $r\geqslant 0$, there exists $R\geqslant0$ such that for every $x\in T$, the space $T\backslash B(x,r)$ has finitely many connected components of diameter at least $R$.
	\end{enumerate}
\end{prop}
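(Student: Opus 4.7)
The plan is to use the $(L,C)$-quasi-isometry $\varphi:T\to\Gamma$ given by the hypothesis to inject components of $T\setminus B(x,r)$ of sufficiently large diameter into components of $\Gamma\setminus B(\varphi(x),r')$ for some $r'$ depending only on $r$, $L$, $C$, and then bound the latter using the structure of $\Gamma$. Since both $T$ and $\Gamma$ are $0$-hyperbolic geodesic spaces, the Morse lemma (stability of quasi-geodesics) provides a constant $M=M(L,C)\geqslant 0$ such that the image under $\varphi$ of any geodesic in $T$ lies within Hausdorff distance $M$ of the geodesic in $\Gamma$ between the images of its endpoints.

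Fix $r\geqslant 0$ and $x\in T$. Each component $U$ of $T\setminus B(x,r)$ is geodesically convex, and since $T$ is an $\mathbb{R}$-tree, all geodesics from $x$ to points of $U$ pass through a single entry point $p_U\in S(x,r)$; if $U$ has diameter at least $R$, then picking $a,b\in U$ with $d_T(a,b)\geqslant R$ and writing $d_T(a,b)=d_T(a,c)+d_T(c,b)$ for the branch point $c\in U$ yields some $y_U\in\{a,b\}$ with $d_T(x,y_U)\geqslant r+R/2$. For distinct components $U\neq U'$, the geodesic $[y_U,y_{U'}]$ meets $B(x,r)$ at some point $z$, so $\varphi(z)$ lies within $Lr+C$ of $\varphi(x)$ and, by the Morse lemma, within $M$ of $[\varphi(y_U),\varphi(y_{U'})]$. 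Since the Gromov product equals the distance to the geodesic in the tree $\Gamma$, this gives
\[
(\varphi(y_U),\varphi(y_{U'}))_{\varphi(x)}\leqslant Lr+C+M=:r'.
\]
Choosing $R$ large enough that $(r+R/2)/L-C>r'$ (for instance $R>2(L^2-1)r+4LC+2LM$), we have $d_\Gamma(\varphi(x),\varphi(y_U)),d_\Gamma(\varphi(x),\varphi(y_{U'}))>r'$, so the unique geodesic between $\varphi(y_U)$ and $\varphi(y_{U'})$ in $\Gamma$ enters $B(\varphi(x),r')$, placing these two points in different components of $\Gamma\setminus B(\varphi(x),r')$.

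This produces the desired injection from components of $T\setminus B(x,r)$ of diameter $\geqslant R$ into components of $\Gamma\setminus B(\varphi(x),r')$. For part (1), each component of $\Gamma\setminus B(\varphi(x),r')$ attaches at a point of $S(\varphi(x),r')$; since $\Gamma$ has countably many edges, $S(\varphi(x),r')$ contains at most countably many points, and at each such point the number of branches (bounded by the local degree) is countable, giving countably many components in total. For part (2), local finiteness of $\Gamma$ ensures $B(\varphi(x),r')$ is compact and meets only finitely many edges, so $\Gamma\setminus B(\varphi(x),r')$ has only finitely many components. The main technical input is the Morse lemma, which allows the separation of components in $T$ to be transferred across the quasi-isometry; the bookkeeping of constants and the component counts in $\Gamma$ are then routine.
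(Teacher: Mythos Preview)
Your argument is correct. The approach differs from the paper's in a couple of interesting ways, so it is worth a brief comparison.

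The paper argues by contradiction and runs the quasi-isometry in the opposite direction: it takes $\varphi:\Gamma\to T$, fixes $R$ explicitly (as $3L+5C$), picks for each large component $C$ a point $x_C$ with $r+R/2\leqslant d_T(x,x_C)\leqslant r+R$, and then pulls these back to vertices $v_C\in\Gamma$ using coarse surjectivity. The key computation is that the $v_C$ are pairwise distinct yet all lie in a fixed ball $B(v,m)$ in $\Gamma$, which immediately contradicts countability (respectively local finiteness). This avoids the Morse lemma entirely: by working with vertices and bounding distances directly, the paper never needs to track the shape of $\varphi$-images of geodesics.

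Your version maps forward via $\varphi:T\to\Gamma$, invokes the Morse lemma to control Gromov products, and then injects large components of $T\setminus B(x,r)$ into components of $\Gamma\setminus B(\varphi(x),r')$ rather than into a vertex set. This is a bit more geometric, and the final counting step (components rather than vertices) is arguably cleaner in the locally finite case. The cost is the extra Morse constant $M$ and the slightly heavier machinery; the paper's bare-hands estimate with an explicit $R$ is more elementary. Both routes are short and neither is clearly superior.
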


\begin{proof}
	We again prove only the first part, as the proof of the second part is analogous. Let $T$ be an $\mathbb{R}$-tree, and suppose there exists a countable simplicial tree $\Gamma$ with a $(1,C)$-quasi-isometry $\varphi:\Gamma\to T$. Suppose also that there exists some $r\geqslant 0$ such that for every $R>0$, there exists $x\in T$ such that $T\backslash B(x,r)$ has uncountably many connected components of diameter at least $R$.
	
	Given $r\geqslant 0$, we let $R=3L+5C$, and pick such an $x\in T$. By coarse surjectivity, we know that there exist $v'\in\Gamma$ such that $d_T(\varphi(v'),x)\leqslant C$, and a vertex $v\in \Gamma$ such that $d_{\Gamma}(v,v')\leqslant 1$. Hence $d_T(\varphi(v),\varphi(v'))\leqslant L+C$, so $d_T(\varphi(v),x)\leqslant L+2C$.
	
	Let $\mathcal{C}_R$ be the set of connected components of $T\backslash B(x,r)$ of diameter greater than $R$, and for each $C\in\mathcal{C}_R$ pick $x_C\in C$ such that $r+\frac{1}{2}R\leqslant d_T(x,x_C)\leqslant r+R$, which must always exist as $T$ is an $\mathbb{R}$-tree. Let $v_C\in\Gamma$ be a vertex such that $d_T(\varphi(v_C),x_C)\leqslant L+2C$, then $d_T(\varphi(v),\varphi(v_C))\leqslant r+R+2(L+2C)$. Therefore $d_{\Gamma}(v,v_C)\leqslant L(R+r+2L+5C)$.
	
	Note that for $C\neq C'$, we have that
	\begin{align*}
	d_T(x_C,x_{C'})\geqslant R & \implies d_T(\varphi(v_C),\varphi(v_{C'}))\geqslant R-2(L+2C)
	\\ & \implies d_{\Gamma}(v_C,v_{C'})\geqslant \frac{1}{L}(R-2L-5C)>0.
	\end{align*} 
	This means that $v_C\neq v_{C'}$, so there are uncountably many such $v_C$'s. Recall that when $m=L(R+r+2L+5C)$, we have that $d_{\Gamma}(v,v_C)\leqslant m$ for every $C\in\mathcal{C}_R$, so this means that there are uncountably many vertices in $B(v,m)$. This contradicts $\Gamma$ being countable. Hence if $T$ is $(1,C)$-quasi-isometric to a countable simplicial tree, we have that for every $r\geqslant 0$, there exists $R>0$ such that for every $x\in T$, the space $T\backslash B(x,r)$ has countably many connected components of diameter at least $R$.
\end{proof}

We therefore obtain some equivalent criteria for when an $\mathbb{R}$-tree is quasi-isometric to a countable or locally finite simplicial tree, including a partial analogue to \Cref{QISimplicial}.

\begin{cor}
	\label{CountableFinite1}
	Given an $\mathbb{R}$-tree $T$, the following are equivalent:
	\begin{enumerate}[(1)]
		\item $T$ is $(L,C)$-quasi-isometric to a countable simplicial tree for some $L\geqslant 1, C\geqslant 0$.
		\item $T$ is $(1,C')$-quasi-isometric to a countable simplicial tree for some $C'\geqslant 0$.
		\item For some $r\geqslant 1$, there exists $R>0$ such that for every $x\in T$, the space $T\backslash B(x,r)$ has countably many connected components of diameter at least $R$.
		\item For every $r\geqslant 0$, there exists $R\geqslant0$ such that for every $x\in T$, the space $T\backslash B(x,r)$ has countably many connected components of diameter at least $R$.
	\end{enumerate}
\end{cor}

\begin{cor}
	\label{CountableFinite2}
	Given an $\mathbb{R}$-tree $T$, the following are equivalent:
	\begin{enumerate}[(1)]
		\item $T$ is $(L,C)$-quasi-isometric to a locally finite simplicial tree for some $L\geqslant 1, C\geqslant 0$.
		\item $T$ is $(1,C')$-quasi-isometric to a locally finite simplicial tree for some $C'\geqslant 0$.
		\item For some $r\geqslant 1$, there exists $R>0$ such that for every $x\in T$, the space $T\backslash B(x,r)$ has finitely many connected components of diameter at least $R$.
		\item For every $r\geqslant 0$, there exists $R\geqslant0$ such that for every $x\in T$, the space $T\backslash B(x,r)$ has finitely many connected components of diameter at least $R$.
	\end{enumerate}
\end{cor}

We can then extend the first equivalences to geodesic metric spaces to get a full analogue of \Cref{QISimplicial}.

\begin{cor}
	\label{CountableFiniteCor2}
	We have the following:
	\begin{enumerate}[(1)]
		\item A geodesic metric space is $(L,C)$-quasi-isometric to a countable simplicial tree for some $L\geqslant 1, C\geqslant 0$ if and only if it is $(1,C')$-quasi-isometric to a countable simplicial tree for some $C'\geqslant 0$.
		\item A geodesic metric space is $(L,C)$-quasi-isometric to a locally finite simplicial tree for some $L\geqslant 1, C\geqslant 0$ if and only if it is $(1,C')$-quasi-isometric to a locally finite simplicial tree for some $C'\geqslant 0$.
	\end{enumerate}
\end{cor}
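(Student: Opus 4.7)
The plan is to reduce to the $\mathbb{R}$-tree case already handled by \Cref{CountableFinite1} and \Cref{CountableFinite2}, using the end-approximating tree as an intermediate object. One direction of each equivalence is immediate: a $(1,C')$-quasi-isometry is in particular an $(L,C')$-quasi-isometry with $L=1$, so only the implication from $(L,C)$ to $(1,C')$ requires work. I treat the countable case; the locally finite case is word-for-word analogous, substituting \Cref{CountableFinite2} for \Cref{CountableFinite1}.

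Suppose $(X,d)$ is a geodesic metric space that is $(L,C)$-quasi-isometric to a countable simplicial tree $\Gamma$. Then $X$ is a quasi-tree, so by \Cref{QuasiIsometry} its end-approximating map $X\to T_X$ is a $(1,C_1)$-quasi-isometry for some $C_1\geqslant 0$, and by \Cref{QuasiInverse} there is a $(1,3C_1)$-quasi-isometry $T_X\to X$. Composing this with the given $(L,C)$-quasi-isometry $X\to\Gamma$ yields an $(L,C_2)$-quasi-isometry from the $\mathbb{R}$-tree $T_X$ to the countable simplicial tree $\Gamma$. The hypothesis of \Cref{CountableFinite1}(1) is therefore satisfied by $T_X$, so \Cref{CountableFinite1} provides a countable simplicial tree $\Gamma'$ and a $(1,C_3)$-quasi-isometry $T_X\to\Gamma'$.

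Finally, composing the $(1,C_1)$-quasi-isometry $X\to T_X$ with the $(1,C_3)$-quasi-isometry $T_X\to\Gamma'$ and invoking \Cref{QuasiTransitive} gives a $(1,C_1+2C_3)$-quasi-isometry $X\to\Gamma'$, as required. The locally finite statement follows identically by replacing \Cref{CountableFinite1} with \Cref{CountableFinite2} and insisting throughout that the simplicial trees in play be locally finite.

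There is no real obstacle here beyond the bookkeeping of constants: the work was already done in \Cref{QuasiIsometry} (which turns an arbitrary quasi-tree into a $(1,C)$-approximation by an $\mathbb{R}$-tree) and in \Cref{CountableFinite1}/\Cref{CountableFinite2} (which transfer countability and local finiteness between different simplicial models of the same $\mathbb{R}$-tree). The role of $T_X$ is precisely to bridge these two inputs, since it allows us to pass from an arbitrary $(L,C)$-quasi-isometry from $X$ to a controlled simplicial tree without incurring a multiplicative error.
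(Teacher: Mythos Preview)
Your proof is correct and follows essentially the same approach as the paper: pass to the end-approximating tree $T_X$ via \Cref{QuasiIsometry}, observe that $T_X$ is then an $\mathbb{R}$-tree quasi-isometric to a countable (resp.\ locally finite) simplicial tree, apply \Cref{CountableFinite1} (resp.\ \Cref{CountableFinite2}) to obtain a $(1,C'')$-quasi-isometry from $T_X$ to such a tree, and compose using \Cref{QuasiTransitive}. The only difference is that you spell out the construction of the quasi-isometry $T_X\to\Gamma$ via \Cref{QuasiInverse}, whereas the paper simply asserts that $T_X$ is quasi-isometric to a countable simplicial tree; this is cosmetic.
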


\begin{proof}
	Let $X$ be a geodesic metric space that is quasi-isometric to a countable simplicial tree. We then have that $X$ is a quasi-tree, so there exists $C'\geqslant 0$ such that $X$ is $(1,C')$-quasi-isometric to $T_X$. This means that  $T_X$ is an $\mathbb{R}$-tree that is quasi-isometric to a countable simplicial tree, so by \Cref{CountableFinite1}, there exists $C''\geqslant 0$ such that $T_X$ is $(1,C'')$-quasi-isometric to a countable simplicial tree. We conclude that $X$ is $(1,C'+2C'')$-quasi-isometric to a countable simplicial tree by \Cref{QuasiTransitive}. The reverse direction is obvious, and the second part follows using the same method.
\end{proof}

This allows us to get analogues to \Cref{QuasiAct} in the case of quasi-actions on countable or locally finite simplicial trees, using the same reasoning as in the proof of \Cref{QuasiAct}.

\begin{cor}
	\label{CountableFiniteQuasiAct}
	We have the following:
	\begin{enumerate}[(1)]
		\item If a finitely generated group admits a cobounded $(L,C)$-quasi-action on a countable simplicial tree for some $L\geqslant 1, C\geqslant 0$, then it admits a quasi-conjugate $(1,C')$-quasi-action on a countable simplicial tree for some $C'\geqslant 0$.
		\item If a finitely generated group admits a cobounded $(L,C)$-quasi-action on a locally finite simplicial tree for some $L\geqslant 1, C\geqslant 0$, then it admits a quasi-conjugate $(1,C')$-quasi-action on a locally finite simplicial tree for some $C'\geqslant 0$.
	\end{enumerate}
\end{cor}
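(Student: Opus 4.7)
The plan is to adapt the proof of \Cref{QuasiAct} almost verbatim, using \Cref{CountableFiniteCor2} in place of \Cref{SimplicialIff}. I will treat the two parts in parallel, since the argument is identical modulo whether we track countability or local finiteness.

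First, suppose $G$ is a finitely generated group admitting a cobounded $(L,C)$-quasi-action on a countable (resp. locally finite) simplicial tree $T_0$. By \Cref{QuasiActImplies}, there is a generating set $S$ of $G$ such that $\mathrm{Cay}(G,S)$ is quasi-isometric to $T_0$, and the isometric action of $G$ on $\mathrm{Cay}(G,S)$ is quasi-conjugate to the given quasi-action on $T_0$. In particular, since $T_0$ is countable (resp. locally finite), this exhibits $\mathrm{Cay}(G,S)$ as an $(L',C')$-quasi-isometric to a countable (resp. locally finite) simplicial tree for some $L'\geqslant 1$, $C'\geqslant 0$. Applying \Cref{CountableFiniteCor2} then yields a countable (resp. locally finite) simplicial tree $\Gamma$ and a $(1,C'')$-quasi-isometry $\varphi:\mathrm{Cay}(G,S)\to\Gamma$ for some $C''\geqslant 0$.

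Next, I would transport the isometric action of $G$ on $\mathrm{Cay}(G,S)$ through $\varphi$ exactly as in the proof of \Cref{QuasiAct}. Letting $\psi:\Gamma\to\mathrm{Cay}(G,S)$ be the $(1,3C'')$-quasi-inverse provided by \Cref{QuasiInverse}, I define $A(g,x)=\varphi\circ g\circ\psi(x)$ for $g\in G$ and $x\in\Gamma$. The same computations as in \Cref{QuasiAct}, using \Cref{QuasiTransitive} and the fact that $d_\Gamma(\varphi(\psi(y)),y)\leqslant C''$, show that $A$ is a $(1,5C'')$-quasi-action on $\Gamma$, and that $\varphi$ is a quasi-conjugacy between the isometric action of $G$ on $\mathrm{Cay}(G,S)$ and this quasi-action. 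Combining with \Cref{TranstitiveQuasiConjugacy} and the quasi-conjugacy supplied by \Cref{QuasiActImplies}, the $(1,5C'')$-quasi-action of $G$ on $\Gamma$ is quasi-conjugate to the original $(L,C)$-quasi-action on $T_0$.

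There is no real obstacle here beyond bookkeeping: the only step that is genuinely doing work is the application of \Cref{CountableFiniteCor2}, which is where the countability or local finiteness of $\Gamma$ is forced. Everything else is a line-by-line repetition of the proof of \Cref{QuasiAct}, and for this reason I would simply write the proof as a short remark observing that the argument of \Cref{QuasiAct} goes through unchanged once \Cref{SimplicialIff} is replaced by the appropriate clause of \Cref{CountableFiniteCor2}.
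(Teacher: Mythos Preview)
Your proposal is correct and matches the paper's approach exactly: the paper simply states that the corollary follows ``using the same reasoning as in the proof of \Cref{QuasiAct}'', and you have spelled out precisely that reasoning, correctly substituting \Cref{CountableFiniteCor2} for \Cref{SimplicialIff} as the key input.
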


We conclude this section by noting that if our quasi-tree is a proper space, then we can use \Cref{CountableFiniteBackwards} to obtain a $(1,C)$-quasi-isometry with a locally finite simplicial tree.

\begin{cor}
	\label{ProperFinite}
	Every proper quasi-tree is $(1,C)$-quasi-isometric to a locally finite simplicial tree.
\end{cor}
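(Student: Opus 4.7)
The plan is to deduce this by feeding \Cref{QuasiIsometry} into \Cref{CountableFiniteBackwards}(2): since the proper quasi-tree $X$ is $(1,C)$-quasi-isometric to its end-approximating tree $T_X$, \Cref{QuasiTransitive} reduces the task to showing that $T_X$ itself is $(1,C')$-quasi-isometric to a locally finite simplicial tree, and for this I will verify the geometric hypothesis of \Cref{CountableFiniteBackwards}(2).

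First I would observe that $T_X$ inherits properness from $X$. Given a bounded sequence $([z_n])$ in $T_X$, choose preimages $z_n\in X$; the inequality $d\leqslant d^*+C$ coming from \Cref{QuasiIsometry} forces $(z_n)$ to lie in a bounded set in $X$, so properness of $X$ delivers a convergent subsequence, and then the non-expanding property of the end-approximating map (\Cref{Pseudometric}) yields convergence in $T_X$.

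The main step is to check that for $r=1$ and $R=1$, and for every $p\in T_X$, the space $T_X\setminus B(p,1)$ has only finitely many connected components of diameter at least $1$. Suppose for contradiction there are infinitely many such components $C_1,C_2,\ldots$. Since $T_X$ is an $\mathbb{R}$-tree, each $C_i$ attaches to $B(p,1)$ at a single boundary point $q_i\in S(p,1)$; applying the tripod-centre description to two diametric points of $C_i$ together with $q_i$ produces a point of $C_i$ at distance at least $1/2$ from $q_i$, hence at distance at least $3/2$ from $p$, and walking back along the geodesic to $p$ yields $z_i\in C_i$ with $d^*(p,z_i)=3/2$. For $i\neq j$ the geodesic $[z_i,z_j]$ passes through $B(p,1)$, which gives $d^*(z_i,z_j)\geqslant 2\cdot(3/2)-2\cdot 1=1$. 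Thus $\{z_i\}$ is an infinite subset of $S(p,3/2)$ pairwise at distance at least $1$, contradicting compactness of $B(p,3/2)$ obtained in the previous step.

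With the hypothesis of \Cref{CountableFiniteBackwards}(2) verified, there is a locally finite simplicial tree $\Gamma$ and a $(1,C')$-quasi-isometry $T_X\to\Gamma$, which we compose with $X\to T_X$ via \Cref{QuasiTransitive} to finish. The only even mildly non-routine point is the diameter-to-radius translation inside a component; everything else chains together results already established in the paper.
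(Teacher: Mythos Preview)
Your argument matches the paper's: both establish properness of $T_X$ and then verify the hypothesis of \Cref{CountableFiniteBackwards}(2) by extracting, from infinitely many large components of $T_X\setminus B(p,1)$, an infinite separated subset of a closed ball, contradicting compactness (the paper just organises this as a statement about arbitrary proper $\mathbb{R}$-trees, and deduces properness of $T_X$ from $f^{-1}(B([x_0],r))=B(x_0,r)$ rather than from the quasi-isometry inequality). One small caveat: the diameter of $C_i$ need not be realised, so a component of diameter $\geqslant 1$ may contain no point at distance $\geqslant 1/2$ from $q_i$; taking $R>1$ (the paper uses diameter greater than $2$ and points at distance $2$) or placing your $z_i$ at distance $5/4$ instead of $3/2$ removes this wrinkle.
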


\begin{proof}
	We first show that every proper $\mathbb{R}$-tree has this property. Let $T$ be an $\mathbb{R}$-tree, and suppose it is not $(1,C)$-quasi-isometric to a locally finite simplicial tree. By \Cref{CountableFiniteBackwards}, we can find $x\in T$ such that $T\backslash B(x,1)$ has infinitely many connected components of diameter greater than 2. Let $\mathcal{C}$ be the set of these connected components.
	
	For every $C\in\mathcal{C}$, we can pick $x_C\in C$ such that $d(x,x_C)=2$. Note that, as $T$ is an $\mathbb{R}$-tree, for every $C,C'\in\mathcal{C}$ such that $C\neq C'$, we have that $d(x_C,x_{C'})\geqslant 2$. We therefore have an infinite collection of points in $B(x,2)$, no subsequence of which can converge. Hence $B(x,2)$ is not compact, which implies that $T$ is not proper.
	
	Let $X$ be a proper quasi-tree, then $T_X$ is proper as the quotient map is continuous. As $X$ is $(1,C)$-quasi-isometric to $T_X$, and $T_X$ is $(1,C')$-quasi-isometric to a locally finite simplicial tree, the conclusion follows.
\end{proof}

\section{Uniform and non-uniform tree approximation}

We are now able to get a uniform version of Gromov's tree approximation in the case of quasi-trees, as an easy consequence of the results in Sections 3 and 4. In fact, we have an even stronger result, as our uniform approximation applies to any subset of our quasi-tree, as opposed to the finite subsets that were considered for general hyperbolic spaces.

\begin{prop}
	\label{Uniform}
	Let $(X,d)$ be a $\delta$-hyperbolic quasi-tree with bottleneck constant $\Delta\geqslant 0$. Let $x_0\in X$, and let $Z\subset X$. Let $Y$ be a union of geodesic segments $\bigcup_{z\in Z}[x_0,z]$. Then there exist an $\mathbb{R}$-tree $T$ and a map $f:(Y,d)\to (T,d^*)$ such that:
	\begin{enumerate}[(1)]
		\item For all $z\in Z$, the restriction of $f$ to the geodesic segment $[x_0,z]$ is an isometry.
		\item For all $x,y\in Y$, we have that $d(x,y)-2(\Delta+2\delta)\leqslant d^*(f(x),f(y))\leqslant d(x,y)$.
	\end{enumerate}
\end{prop}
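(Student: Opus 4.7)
The plan is to take $T$ to be the end-approximating tree $(T_X, d^*)$ from \Cref{TX}, and to take $f$ to be the restriction to $Y$ of the end-approximating map sending $x \mapsto [x]$. Once these are chosen, both conclusions essentially follow from work already done in Sections 3 and 4: condition (2) is exactly the quasi-isometry estimate from \Cref{QuasiIsometry}, and condition (1) is a byproduct of the construction in \Cref{Tree}.

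More precisely, for condition (2), \Cref{QuasiIsometry} tells us that the end-approximating map satisfies
\begin{equation*}
d(x,y) - 2(\Delta + 2\delta) \leqslant d^*([x],[y]) \leqslant d(x,y)
\end{equation*}
for every $x, y \in X$. Since $Y \subseteq X$, restricting this estimate to $x, y \in Y$ gives exactly what is required, with no additional work.

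For condition (1), I would reuse the computation carried out inside the proof of \Cref{Tree}: for any $z \in Z$ and any $x \in [x_0, z]$, one has $(x,z)_{x_0} = d(x_0, x)$, which combined with \Cref{PMLem1} forces $(x,z)'_{x_0} = (x,z)_{x_0}$, hence $d^*([x],[z]) = d(x,z)$. To promote this to an isometry on all of $[x_0, z]$, it suffices to note that if $x, x' \in [x_0, z]$ with $d(x_0, x) \leqslant d(x_0, x')$, then $x \in [x_0, x']$, so applying the same observation with $x'$ in place of $z$ yields $d^*([x],[x']) = d(x,x')$. Thus the restriction of $f$ to each geodesic segment $[x_0, z]$ is an isometry.

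There is no real obstacle here, since the substantive work (building $T_X$, proving it is an $\mathbb{R}$-tree, and establishing the uniform quasi-isometry estimate using Manning's bottleneck criterion via \Cref{Constant}) has already been done. The only thing to be careful about is the mild bookkeeping in condition (1), namely verifying that the isometry property on geodesics emanating from $x_0$ extends to pairs of points on the same such geodesic, which is immediate from the tree structure of $[x_0, z]$.
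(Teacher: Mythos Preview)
Your proposal is correct and matches the paper's own proof essentially line for line: both take $T=T_X$ with the restricted end-approximating map, cite \Cref{QuasiIsometry} for condition (2), and invoke the isometry-on-geodesics observation from the proof of \Cref{Tree} for condition (1). Your extra sentence spelling out why the isometry extends to arbitrary pairs $x,x'\in[x_0,z]$ is a nice bit of bookkeeping that the paper leaves implicit.
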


\begin{proof}
	Consider the end-approximating tree $T_X$, and the end-approximating map $f:X\to T_X$. We now consider the restriction $f|_Y:Y\to T_X$. The fact that $f$ is an isometry on every geodesic segment $[x_0,z]$ was noted in the first part of the proof of \Cref{Tree}, so it is also true for $f|_Y$.
	
	For the second part, we simply apply \Cref{QuasiIsometry}. It was noted in the proof that, given $X$ is a quasi-tree, we have that $d(x,y)-2(\Delta+2\delta)\leqslant d^*([x],[y])\leqslant d(x,y)$ for every $x,y\in X$. Hence this is also true for $f|_Y$ when applied to any $x,y\in Y$.
\end{proof}

The converse to this is obviously true, as by taking our set $Z$ to be the whole of $X$ we would get a quasi-isometric embedding from $X$ to an $\mathbb{R}$-tree, giving us that $X$ is a quasi-tree by the following standard result.

\begin{defn}
	A \emph{subtree} $T$ of a metric space $X$ is a subspace of $X$ that is an $\mathbb{R}$-tree under the shortest path metric $d_T$.
\end{defn}

\begin{lem}
	\label{QIEmbedding}
	Let $X$ be a geodesic metric space. If $X$ quasi-isometrically embeds into an $\mathbb{R}$-tree $T$, then $X$ is quasi-isometric to a subtree of $T$.
\end{lem}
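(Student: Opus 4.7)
The plan is to take $T'\subset T$ to be the convex hull of $f(X)$ in $T$, namely
\begin{equation*}
T' = \bigcup_{x_1,x_2\in X}[f(x_1),f(x_2)],
\end{equation*}
where $[f(x_1),f(x_2)]$ denotes the unique geodesic in $T$. A short tree-theoretic argument shows this union is already closed under taking further geodesics between its own points: given $p\in[a,b]$ and $q\in[c,d]$ with $a,b,c,d\in f(X)$, the segment $[p,q]$ is contained in the finite subtree spanned by $\{a,b,c,d\}$, which is a union of geodesics between those four points. Consequently $T'$ is a subtree in the sense of the above definition, and the shortest path metric on $T'$ coincides with the restriction of $d_T$.

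The restricted map $f:X\to T'$ then remains an $(L,C)$-quasi-isometric embedding, so the only remaining issue is coarse surjectivity. Given $p\in T'$, by construction $p\in[f(x_1),f(x_2)]$ for some $x_1,x_2\in X$. Fixing any geodesic $[x_1,x_2]\subset X$, its image $f([x_1,x_2])$ is an $(L,C)$-quasi-geodesic in $T$ joining $f(x_1)$ to $f(x_2)$.

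The main obstacle, and the only nontrivial step, is then to invoke the stability of quasi-geodesics in the $0$-hyperbolic space $T$: there exists a constant $M=M(L,C)$ such that every $(L,C)$-quasi-geodesic in an $\mathbb{R}$-tree lies within Hausdorff distance $M$ of the unique geodesic joining its endpoints. Applied to $f([x_1,x_2])$ and $[f(x_1),f(x_2)]$, this produces some $x\in[x_1,x_2]$ with $d_T(f(x),p)\leqslant M$, so $f:X\to T'$ is $M$-coarsely surjective. Combined with the coarse Lipschitz bounds inherited from $f:X\to T$, this gives the desired quasi-isometry between $X$ and the subtree $T'$. The Morse lemma for trees admits a particularly clean direct proof via a tripod argument (with $M$ explicit in $L$ and $C$), so this step is standard and can be either cited or dispatched in a line or two.
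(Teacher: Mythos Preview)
Your argument is correct. The subtree you build coincides with the paper's: the paper fixes a basepoint $x_0$ and takes $T'=\bigcup_{z\in X}[f(x_0),f(z)]$, but in a tree the star around any point of a set already equals the full convex hull, so the two descriptions agree.

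The genuine difference is in the coarse surjectivity step. You invoke the Morse lemma (stability of quasi-geodesics) in the $0$-hyperbolic space $T$ to get a point of $f([x_1,x_2])$ close to $p$. The paper instead gives a direct, self-contained connectedness argument: if some $y\in T'$ had $B(y,C)\cap f(X)=\emptyset$, then $f([x_0,z])$ would be separated by $y$ into two pieces, and one can find $z_1,z_2\in[x_0,z]$ arbitrarily close in $X$ whose images lie on opposite sides of $y$, contradicting the upper Lipschitz bound. This is really a bare-hands proof of the tree case of the Morse lemma, so the two arguments are morally the same. Your route is cleaner to state if one is happy to cite stability of quasi-geodesics; the paper's route is more elementary and keeps the constants explicit (the coarse surjectivity constant is exactly $C$, not a derived $M(L,C)$).
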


\begin{proof}
	Let $f:X\to T$ be an $(L,C)$-quasi-isometric embedding, where $T$ is an $\mathbb{R}$-tree. Assume that $C>0$. Let $x_0\in X$, and let $T'$ be the union of geodesic segments $\bigcup_{z\in X}[f(x_0),f(z)]$. The space $T'$ is a path-connected subset of an $\mathbb{R}$-tree, so is itself an $\mathbb{R}$-tree. We want to show that $f:X\to T'$ is a quasi-isometry.
	
	As $f:X\to T'$ is an $(L,C)$-quasi-isometric embedding, we only need to check that for every $y\in T'$, there exists $x\in X$ such that $d'(f(x),y)\leqslant C$. Suppose this is not the case, so for some $y\in T'$, we have that $B(y,C)\cap f(X)=\emptyset$. By the definition of $T'$, we can find $z\in X$ such that $y\in[f(x_0),f(z)]$.
	
	As $B(y,C)\cap f(X)=\emptyset$, we have that $f([x_0,z])$ is disconnected in $T'$, where $f(x_0)$ and $f(z)$ are in different connected components. We can group the connected components of $f([x_0,z])$ together depending on which component of $T'\backslash\{y\}$ they lie in.
	
	For every $\varepsilon>0$, we can therefore find $z_1,z_2\in[x_0,z]\subset X$ such that $d(z_1,z_2)\leqslant\varepsilon$, and $f(z_1),f(z_2)$ lie in different connected components of $f([x_0,z])$, with $y\in[f(z_1),f(z_2)]$. This tells us that $2C\leqslant d'(f(z_1),f(z_2))\leqslant Ld(z_1,z_2)+C\leqslant L\varepsilon+C$, which is a contradiction for $\varepsilon<\frac{C}{L}$. Hence $f:X\to T'$ is a quasi-isometry.
\end{proof}

Although it is sufficient by the above, the converse of \Cref{Uniform} is actually a far stronger assumption than we need to prove that $X$ is a quasi-tree. As it turns out, it is possible to show that a global quasi-isometry exists when we only assume uniform approximation for finite subsets. In particular, this means that having uniform tree approximation for finite subsets is equivalent to having uniform tree approximation for all subsets, so quasi-trees are exactly the geodesic spaces for which uniform tree approximation is possible.

In fact, to show that our space is a quasi-tree, we can even loosen our idea of uniform tree approximation for finite subsets to allow for a multiplicative error as well. This means that having this less strict version of tree approximation for all finite subsets is equivalent to having the stronger version for all finite subsets. This third equivalence was suggested by the referee of this paper.

\begin{prop}
	\label{FiniteAprrox}
	Let $(X,d)$ be a geodesic metric space. The following are equivalent:
	\begin{enumerate}[(1)]
		\item $X$ is a quasi-tree.
		\item There exists a constant $C\geqslant 0$ such that for every finite subset $Z$ of $X$, there exist an $\mathbb{R}$-tree $(T,d^*)$ and a $(1,C)$-quasi-isometric embedding $f:(Z,d)\to (T,d^*)$.
		\item There exist constants $L\geqslant 1$ and $C\geqslant 0$ such that for every finite subset $Z$ of $X$, there exist an $\mathbb{R}$-tree $(T,d^*)$ and an $(L,C)$-quasi-isometric embedding $f:(Z,d)\to (T,d^*)$. 
	\end{enumerate}
\end{prop}

\begin{proof}
	The first statement implies the second statement by \Cref{Uniform}. The second statement clearly implies the third. It remains to show that the third statement implies that $X$ is a quasi-tree.
	
	Now suppose that such constants $L\geqslant 1$ and $C\geqslant 0$ exist. Let $A=\max\{1,C\}$, so $A>0$, and note that any $(L,C)$-quasi-isometric embedding is also an $(L,A)$-quasi-isometric embedding. We want to show that $X$ satisfies the bottleneck criterion. Let $x,y\in X$, let $[x,y]$ be a geodesic between them, and let $m\in[x,y]$ be the midpoint. Let $\{a_0,\ldots,a_k\}\subset [x,y]$ be such that $a_0=x$, $a_k=y$, and $d(a_i,a_{i+1})\leqslant \frac{A}{L}$. Let $\gamma$ be a path between $x$ and $y$, and let $\{z_0,\ldots,z_n\}\subset \gamma$ be such that $z_0=x$, $z_n=y$, and $d(z_i,z_{i+1})\leqslant \frac{A}{L}$.
	
	We let $Y=\{m,a_0,\ldots,a_k,z_1,\ldots,z_{n-1}\}$, and so, by our assumption, there exist an $\mathbb{R}$-tree $T$ and a map $f:(Y,d)\to (T,d^*)$ such that $\frac{1}{L}d(a,b)-A\leqslant d^*(f(a),f(b))\leqslant Ld(a,b)+A$ for every $a,b\in Y$.
	
	Consider $[f(x),f(y)]$. As $T$ is an $\mathbb{R}$-tree, there exists $m'\in[f(x),f(y)]$ such that $d^*(f(x),f(m))$ $=d^*(f(x),m')+d^*(m',f(m))$ and $d^*(f(m),f(y))=d^*(f(m),m')+d^*(m',f(y))$. In other words, $m'$ is the closest point in $[f(x),f(y)]$ to $f(m)$.
	
	Consider the path formed by the concatenation $[f(a_0),f(a_1)]*\cdots *[f(a_{k-1}),f(a_k)]$. This is a path from $f(x)$ to $f(y)$ in $T$, so $m'\in [f(a_i),f(a_{i+1})]$ for some $i$. Recall that $d(a_i,a_{i+1})\leqslant \frac{A}{L}$, so $d^*(f(a_i),f(a_{i+1}))\leqslant 2A$. Let $j\in\{1,\ldots,n\}$ be the least element such that $f(a_j)\in B(m',2A)$, and let $l\in\{1,\ldots,n\}$ be the largest element such that $f(a_l)\in B(m',2A)$. The preceding statements imply that these exist, and that $j\neq l$.
	
	We note that $f(a_j)$ lies in the same connected component of $T\backslash \{m'\}$ as $f(x)$, as either $a_j=x$ or $f(a_{j-1})$ lies in the same connected component of $T\backslash B(m',2A)$ as $f(x)$. By the same reasoning, $f(a_l)$ lies in the same connected component of $T\backslash \{m'\}$ as $f(y)$, so $m'$ separates $f(a_j)$ and $f(a_l)$ in $T$, and therefore $m'\in [f(a_j),f(a_l)]$.
	
	We also want to show that $m\in [a_j,a_l]$. Note that $m\in[a_i,a_{i+1}]$ for some $i$, and both $d^*(f(a_i),f(m))\leqslant 2A$ and $d^*(f(a_{i+1}),f(m))\leqslant 2A$. Either $f(m)=m'$, or $f(m)$ lies in a different connected component of $T\backslash \{m'\}$ to $f(x)$ and to $f(y)$. Either way, this implies that $i\geqslant j$ and $i+1\leqslant l$, so $m\in [a_j,a_l]$.
	
	We therefore have that
	\begin{align*}
		d^*(f(a_j),f(a_l))+2d^*(m',f(m)) & =d^*(f(a_j),f(m))+d^*(f(m),f(a_l))
		\\ & \leqslant Ld(a_j,m)+Ld(m,a_l)+2A
		\\ & =Ld(a_j,a_l)+2A
		\\ & \leqslant L^2d^*(f(a_j),f(a_l))+L^2A+2A.
	\end{align*}
	This means that
	\begin{align*}
		d^*(m',f(m)) & \leqslant \frac{1}{2}((L^2-1)d^*(f(a_j),f(a_l))+L^2A+2A)
		\\ & \leqslant \frac{1}{2}(4(L^2-1)A+L^2A+2A)
		\\ & =3L^2A-A.
	\end{align*}
	
	Consider the path formed by the concatenation $[f(z_0),f(z_1)]*\cdots *[f(z_{n-1}),f(z_n)]$. This is a path from $f(x)$ to $f(y)$ in $T$, so as before $m'\in [f(z_i),f(z_{i+1})]$ for some $i$. Note that $d^*(f(z_i),f(z_{i+1}))\leqslant Ld(z_i,z_{i+1})+A\leqslant 2A$, so for some $i$ we have that $d^*(f(z_i),m')\leqslant A$.
	
	We conclude that
	\begin{align*}
		d(m,z_i) & \leqslant Ld^*(f(m),f(z_i))+A
		\\ & \leqslant L(d^*(f(m),m')+d^*(m',f(z_i)))+A
		\\ & \leqslant 3L^3A+A,
	\end{align*}
	so $X$ satisfies the bottleneck criterion, and therefore is a quasi-tree.
\end{proof}

Gromov also proved an alternative version of tree approximation in hyperbolic spaces, in which the approximating tree is a subspace of the hyperbolic space.

\begin{prop}
	\label{SubtreeApproximation}
	\emph{\cite[p.~157--158]{Gromov1987}\cite{Bowditch2006}}
	Suppose $X$ is a $\delta$-hyperbolic geodesic metric space. There exists a function $h:\mathbb{N}\to[0,\infty)$ such that if $Y\subset X$ is a finite set of points with $|Y|=n$, then there exists a subtree $T\subset X$ such that $Y\subset T$ and for every $x,y\in T$, we have that $d_T(x,y)\leqslant d(x,y)+\delta h(n)$.
\end{prop}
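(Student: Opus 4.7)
The plan is to build $T$ greedily by attaching geodesic segments to points of $Y$ one at a time. After choosing an ordering $y_1, \ldots, y_n$ of $Y$, I would set $T_1 = \{y_1\}$ and iteratively form
\begin{equation*}
	T_{k+1} \;=\; T_k \cup [p_{k+1}, y_{k+1}],
\end{equation*}
where $p_{k+1} \in T_k$ is a nearest point to $y_{k+1}$ and the geodesic segment is chosen to meet $T_k$ only at $p_{k+1}$ (truncating it at the last point where it re-enters $T_k$ and adjusting $p_{k+1}$ accordingly). After $n$ iterations we obtain $T := T_n$, a finite union of geodesic arcs containing $Y$ with at most $n-1$ branch points, which is a subtree of $X$ under the induced length metric.

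For the distortion bound, fix $x, y \in T$ and consider the unique path in $T$ from $x$ to $y$, which decomposes as a concatenation of at most $n - 1$ geodesic sub-segments joined at branch points $x = q_0, q_1, \ldots, q_m = y$. A telescoping computation, using the definition of the Gromov product, yields
\begin{equation*}
	d_T(x, y) - d(x, y) \;=\; \sum_{k=1}^{m-1} 2\,(q_0, q_{k+1})_{q_k}.
\end{equation*}
The plan is to bound each summand by $O(\delta)$. Each branch point $q_k$ was introduced as the nearest-point projection $p_{j+1}$ of some later-added point $y_{j+1}$ onto an earlier subtree $T_j$; since $T_j$ contains a geodesic from $q_0$ to $q_k$ on which $q_k$ is itself a nearest point to $y_{j+1}$, the standard fact that nearest-point projections onto geodesics in $\delta$-hyperbolic spaces have Gromov product at most $2\delta$ gives $(q_0, y_{j+1})_{q_k} \leq 2\delta$. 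Because $q_{k+1}$ lies on the $q_k$-side of the branching away from $q_0$, a short calculation using the triangle inequality shows $(q_0, q_{k+1})_{q_k} \leq (q_0, y_{j+1})_{q_k}$, giving the desired $O(\delta)$ bound. Summing over the at most $n - 1$ branch points yields the conclusion with $h(n) = O(n)$.

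The main obstacle is verifying that $T$ is genuinely an $\mathbb{R}$-tree with the induced length metric, rather than a union of geodesics with uncontrolled overlap: the attached segment $[p_{k+1}, y_{k+1}]$ may in principle re-enter $T_k$ beyond $p_{k+1}$, and one must use hyperbolicity to arrange, possibly after perturbing $p_{k+1}$ by $O(\delta)$, that this does not occur. A secondary difficulty is that $q_{k+1}$ need not lie exactly on $[q_k, y_{j+1}]$ (it may itself be a deeper branch point), in which case one must either recurse or absorb an additional $O(\delta)$ error. A final refinement would be to improve the growth of $h$ from linear to logarithmic, matching Proposition \ref{IntroTreeApprox}; this would require merging branches in a balanced, binary-tree-like order rather than the naive sequential one, following the strategy of Gromov's original proof.
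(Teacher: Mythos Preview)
Your construction is exactly the one the paper describes (immediately after the statement it says: ``let $Y=\{y_1,\ldots,y_n\}$, set $T_1=[y_1,y_2]$, then set $T_2=[y_3,t]\cup T_1$ where $t$ is a closest point in $T_1$ to $y_3$, and so on''), and the paper does not actually give a proof beyond this description, instead citing Gromov and Bowditch. Your sketch of the distortion bound via the telescoping identity $d_T(x,y)-d(x,y)=\sum 2(q_0,q_{k+1})_{q_k}$ together with the nearest-point projection estimate is the standard argument and is correct, including your honest flagging of the two technical points (ensuring $T$ is genuinely a tree, and the location of $q_{k+1}$ relative to $[q_k,y_{j+1}]$), both of which are routine to handle at the cost of adjusting the constant in $h(n)$.
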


It is clear from the definition of $d_T$ that $d(x,y)\leqslant d_T(x,y)$. In this version of tree approximation, the $\mathbb{R}$-tree $T$ is constructed in the obvious way: let $Y=\{y_1,\ldots,y_n\}$, set $T_1=[y_1,y_2]$, then set $T_2=[y_3,t]\cup T_1$, where $t$ is a closest point in $T_1$ to $y_3$, and so on. This process will terminate at $T=T_{n-1}$.

It is natural to ask if there is a uniform version of this type of tree approximation in the case of quasi-trees.

\begin{ques}
	\label{BowditchQuestion}
	Let $X$ be a quasi-tree. Does there exist a constant $C\geqslant 0$, depending only on $X$, such that for any set of finite points $Y\subset X$, we can construct an $\mathbb{R}$-tree $T\subset X$ such that $Y\subset T$, and for every $x,y\in T$, we have that $d_T(x,y)\leqslant d(x,y)+C$?
\end{ques}

It is immediately obvious that taking the same approach as in the proof of \Cref{SubtreeApproximation} will not work, as the following example shows.

\begin{exm}
	Consider the space $X=[0,1]\times\mathbb{R}$ with the Euclidean metric. It is clear that $X$ is a quasi-tree. Let $n\in\mathbb{N}$ be odd, and let $Y=\{(0,0),(1,1),(0,2),\ldots,$ $(0,n-1),(1,n)\}$. If a tree $T$ is constructed using this ordering of the points, we get that $d((0,0),(1,n))=\sqrt{1+n^2}$ and $d_T((0,0),(1,n))=n\sqrt{2}$, so
	\begin{equation*}
	\lim_{n\to\infty}\frac{d((0,0),(1,n))}{d_T((0,0),(1,n))}=\frac{1}{\sqrt{2}}.
	\end{equation*}
	In particular, we cannot have that $d_T((0,0),(1,n))\leqslant d((0,0),(1,n))+C$ for all odd $n\in\mathbb{N}$, whatever the choice of the constant $C$.
\end{exm}

Although the naive approach does not work, in the above example the answer to this question is actually yes, which we can easily see by letting $T=(\{\frac{1}{2}\}\times \mathbb{R})\cup\bigcup_{i=1}^n([0,1]\times\{y_i\})$, and letting $C=1$. We can however show that there are quasi-trees in which the answer to the above question will always be no, whichever way the subtree is constructed. The idea in the following example is that, over a large scale, it is not possible to keep the paths in a subtree tight to geodesics in all directions.

\begin{exm}
	\label{NonUniformExample}
	Take the infinite 4-regular tree with a base vertex $x_0$ and edges of length 1. For all $n\in\mathbb{Z}_{\geqslant 0}$, replace the vertices in $S(x_0,n)$ with copies of the rectangle $[0,2]\times[0,2^{n+1}]$. Attach the edges to these vertex rectangles at the midpoint of the sides, with one of the short sides oriented towards $[0,2]^2$ (see \Cref{FigNonUniformApproxBW2} for an illustration). We will call this space $X$, and note that it is a quasi-tree under the piecewise Euclidean metric by Manning's bottleneck criterion.
	
	\begin{figure}[h]
		\centering
		\includegraphics[width=\textwidth]{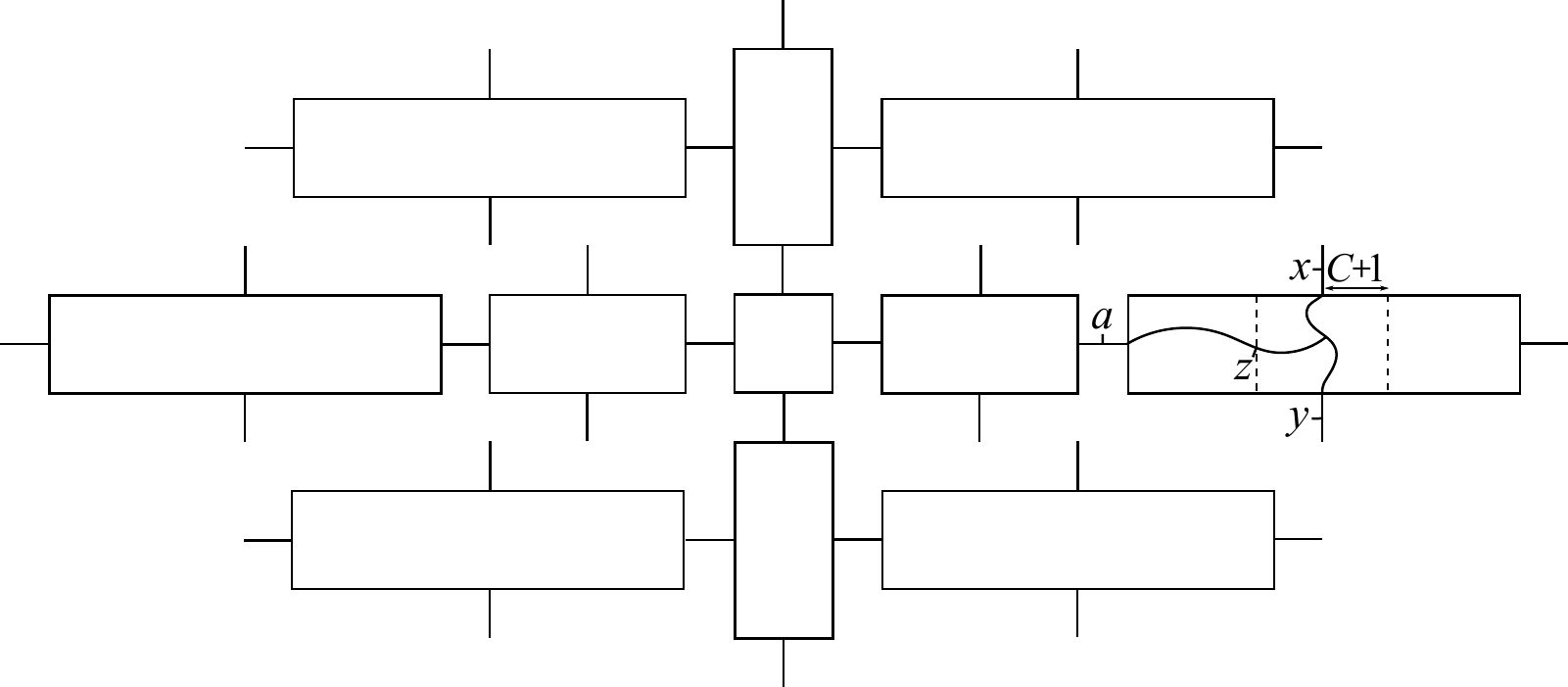}
		\caption{A counterexample to uniform subtree approximation}
		\label{FigNonUniformApproxBW2}
	\end{figure}
	
	Let $C>0$ be arbitrary. Let $n\in\mathbb{N}$ be such that $2^{n+1}>2C+3$, and let $Y_n$ be the set of midpoints of the edges in $X$ that correspond to the edges in $B(x_0,n)$ in the 4-regular tree. Suppose that $T$ is a subspace of $X$ such that $Y_n\subset T$, $T$ is a tree under the shortest path metric $d_T$, and $d_T(x,y)\leqslant d(x,y)+C$ for every $x,y\in T$. Consider a rectangle in $X$ of size $[0,2]\times[0,2^{m+1}]$, where $2C+2<2^{m+1}<2^{n+1}$.
	
	Let $x,y\in Y_n$ be the midpoints of the edges attached to the long sides of this rectangle, and let $a\in Y_n$ be the midpoint of the edge attached to the short side of this rectangle nearest to the central square. We will show that at least one of $d_T(a,x)$ or $d_T(a,y)$ must have large error in comparison to $d(a,x)$ or $d(a,y)$, respectively.
	
	We first note that there must be a unique shortest path in $T$ that connects $x$ to $y$, and it must have length at most $d(x,y)+C=3+C$ by our assumption on $T$. Therefore this path must stay in the subrectangle $[0,2]\times[2^m-(C+1),2^m+C+1]$, as otherwise $d_T(x,y)\geqslant 1+2(C+1)= 3+2C>d(x,y)+C$.
	
	As $2^m>C+1$, this subrectangle is a proper subset of the original rectangle, and, in particular, does not contain the attaching point of the edge containing $a$. As $T$ is a tree, this means that outside this subrectangle the shortest path in $T$ between $a$ and $x$ must be identical to the shortest path in $T$ between $a$ and $y$. Let $z$ be the point at which this path intersects $[0,2]\times\{2^m-(C+1)\}$, and suppose without loss of generality that $d(x,z)\geqslant d(y,z)$.
	
	We now want to use this to compare $d_T(a,x)$ to $d(a,x)$. We first note that
	\begin{equation*}
	d(a,z)+d(z,x)\leqslant d_T(a,z)+d_T(z,x)=d_T(a,x).
	\end{equation*}
	For $z'\in[0,2]\times\{2^m-(C+1)\}$ satisfying $d(x,z')\geqslant d(y,z')$, the sum $d(a,z')+d(z',x)$ will be minimal when $z'=(1,2^m-(C+1))$, which is the central point of that side of the subrectangle. We therefore get that
	\begin{equation*}
	d_T(a,x)\geqslant d(a,z)+d(z,x)\geqslant 2^m-(C+1)+\frac{1}{2}+\sqrt{1+(C+1)^2}+\frac{1}{2},
	\end{equation*}
	while $d(a,x)=1+\sqrt{1+(2^m)^2}$, so
	\begin{equation*}
	d_T(a,x)-d(a,x)\geqslant \sqrt{1+(C+1)^2}-(C+1)+2^m-\sqrt{1+(2^m)^2}.
	\end{equation*}
	Note that
	\begin{equation*}
	2^m-\sqrt{1+(2^m)^2}\to 0 \text{ as } m\to\infty,
	\end{equation*} so taking $\varepsilon=\frac{\sqrt{1+(C+1)^2}-(C+1)}{2}$, we know that there exists some $M\in\mathbb{N}$ such that $2^M>C+1$, and such that for all choices of $m$, where $n>m\geqslant M$, we have that $d_T(a,x)\geqslant d(a,x)+\varepsilon$. This is true for any approximating tree $T\subset X$ of $Y_n$, and in fact $M$ can be chosen uniformly, as it is not dependent on $n$.
	
	We now want to use this to show that, if we choose large enough $n$, no subtree of $X$ can approximate $Y_n$ with error no greater than $C$. Suppose otherwise, so suppose there exists a constant $C>0$ such that for any set of finite points $Y\subset X$, there exists an $\mathbb{R}$-tree $T$ that is embedded in $X$ such that $Y\subset T$, and such that for every $x,y\in T$, we have that $d_T(x,y)\leqslant d(x,y)+C$. Let $n\in\mathbb{N}$ be such that $\varepsilon(n-M-1)>C$, and consider $Y_n$, with $T$ the associated subtree of $X$.
	
	Let $x_1$ be the midpoint of an edge adjacent to the central square $[0,2]^2$. For $x_i$, where $1\leqslant i\leqslant n-2$, we choose $x_{i+1}\in Y_n$ such that $x_i$ and $x_{i+1}$ are midpoints of edges attached to the same rectangle, $x_{i+1}$ is strictly further from the central square than $x_i$, and $x_{i+1}$ is the midpoint of an edge attached to one of the long sides of the shared rectangle. We can think of this as moving outwards in the quasi-tree $X$, turning ``left'' or ``right'' each time we enter a rectangle. This gives us two possible options for $x_{i+1}$, we take the option that maximises $d_T(x_i,x_{i+1})$.
	
	This means that for $i\geqslant M$, we have that $d_T(x_i,x_{i+1})\geqslant d(x_i,x_{i+1})+\varepsilon$, and we note that for all other $i$, we still have that $d_T(x_i,x_{i+1})\geqslant d(x_i,x_{i+1})$. By our construction of $X$, any path between $x_1$ and $x_{n-1}$ must pass through every $x_i$, so we can see that $d_T(x_1,x_{n-1})=d_T(x_1,x_2)+\cdots+d_T(x_{n-2},x_{n-1})$. It is also clear from our construction of $X$ that $d(x_1,x_{n-1})=d(x_1,x_2)+\cdots+d(x_{n-2},x_{n-1})$, so
	\begin{equation*}
	d_T(x_1,x_{n-1})\geqslant d(x_1,x_{n-1}) + \varepsilon(n-M-1)>d(x_1,x_{n-1})+C.
	\end{equation*}
	This is a contradiction, so no such constant $C>0$ exists for the quasi-tree $X$.
\end{exm}

Although the answer to \Cref{BowditchQuestion} is no in general, we can still ask the following question.

\begin{ques}
	\label{BowditchCharacterisation}
	Does there exist a characterisation of those quasi-trees for which the answer to \Cref{BowditchQuestion} is yes?
\end{ques}

\section{Geometry of the end-approximating tree}

We will now return to the end-approximating tree $T_X$. In Section 3, we showed that if $X$ is a geodesic metric space, then $T_X$ is an $\mathbb{R}$-tree, and in Section 4, we showed that if $X$ is a quasi-tree, then $T_X$ is $(1,C)$-quasi-isometric to $X$. In this section, we will give more detail on how the geometry of $X$ relates to the geometry of $T_X$. In particular, we will show that when $X$ is proper their space of ends are homeomorphic, and that when $X$ is a quasi-tree, their boundaries can be thought of as being isometric.

\subsection{Path components}

Here we will formalise the idea that $T_X$ is constructed by collapsing $X$ along the spheres $S(x_0,r)$, which was mentioned at the end of Section 3. In doing so we will look at the correspondence between the path components of $X\backslash B(x_0,r)$ and $T_X\backslash B([x_0],r)$, which will be relevant in Sections 6.2 and 6.3.

For this section, we will assume a fixed basepoint $x_0\in X$. We begin with the following easy lemma.

\begin{lem}
	\label{Paths}
	Let $(X,d)$ be a metric space, and let $x,y\in X$. Suppose that for every $\varepsilon>0$, there exists a path $\gamma:[0,1]\to X$ from $x$ to $y$ such that $d(x_0,\gamma(t))\geqslant\min\{d(x_0,x),d(x_0,y)\}-\varepsilon$ for all $t\in[0,1]$. Then $d^*([x],[y])=|d(x_0,x)-d(x_0,y)|$.
\end{lem}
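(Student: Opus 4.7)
The plan is to reduce the claim to computing $(x,y)'_{x_0} = \min\{d(x_0,x), d(x_0,y)\}$, since then the formula $d^*([x],[y]) = |d(x_0,x) - d(x_0,y)|$ follows immediately by substituting into the definition
\begin{equation*}
d^*([x],[y]) = d'(x,y) = d(x_0,x) + d(x_0,y) - 2(x,y)'_{x_0}.
\end{equation*}
Without loss of generality assume $m := d(x_0,x) \leqslant d(x_0,y)$. The upper bound $(x,y)'_{x_0} \leqslant m$ is immediate from \Cref{PMLem1}, so the content is the matching lower bound $(x,y)'_{x_0} \geqslant m$.

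For the lower bound I would show that for every $\delta > 0$ there is a sequence in $S_{x,y}$ whose consecutive Gromov products at $x_0$ all exceed $m - \delta$. First, I would apply the hypothesis with $\varepsilon = \delta/2$ to obtain a path $\gamma : [0,1] \to X$ from $x$ to $y$ with $d(x_0, \gamma(t)) \geqslant m - \delta/2$ for all $t$. Since $\gamma$ is continuous on the compact interval $[0,1]$, it is uniformly continuous, so I can choose a subdivision $0 = t_1 < t_2 < \cdots < t_n = 1$ fine enough that $d(\gamma(t_i), \gamma(t_{i+1})) < \delta$ for every $i$. Setting $x_i := \gamma(t_i)$ and expanding the Gromov product gives
\begin{equation*}
(x_i, x_{i+1})_{x_0} = \tfrac{1}{2}\bigl(d(x_0,x_i) + d(x_0,x_{i+1}) - d(x_i,x_{i+1})\bigr) \geqslant \tfrac{1}{2}\bigl(2(m - \delta/2) - \delta\bigr) = m - \delta.
\end{equation*}
This exhibits an element of $S_{x,y}$ witnessing $\min_i (x_i, x_{i+1})_{x_0} \geqslant m - \delta$, so $(x,y)'_{x_0} \geqslant m - \delta$. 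Letting $\delta \to 0$ gives $(x,y)'_{x_0} \geqslant m$, as required.

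There is no serious obstacle here; the argument is essentially forced once one notices that the hypothesis is engineered precisely to let us discretise the path into a sequence whose consecutive terms are close in $X$ while still staying far from $x_0$, which is exactly the combination that keeps the Gromov products $(x_i, x_{i+1})_{x_0}$ close to the maximal possible value $m$. The only mild subtlety is remembering to split $\varepsilon$ between the ``stays far from $x_0$'' estimate and the ``consecutive points are close'' estimate when choosing constants.
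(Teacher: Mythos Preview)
Your proof is correct and follows essentially the same approach as the paper: reduce to showing $(x,y)'_{x_0} = \min\{d(x_0,x),d(x_0,y)\}$, use \Cref{PMLem1} for the upper bound, and discretise the path $\gamma$ to bound the Gromov products of consecutive points from below. The only cosmetic difference is that the paper keeps two independent small parameters $\varepsilon$ and $\nu$ rather than splitting a single $\delta$, and it does not explicitly invoke uniform continuity when choosing the subdivision.
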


\begin{proof}
	Recall that $d^*([x],[y])=d'(x,y)=d(x_0,x)+d(x_0,y)-2(x,y)'_{x_0}$, so if we can show that $(x,y)'_{x_0}=\min\{d(x_0,x),d(x_0,y)\}$, then we will be done. We already know that $(x,y)'_{x_0}\leqslant\min\{d(x_0,x),d(x_0,y)\}$, so we just need the opposite inequality.
	
	Let $\varepsilon>0$, and let $\gamma:[0,1]\to X$ be a path from $x$ to $y$ such that $d(x_0,\gamma(t))\geqslant\min\{d(x_0,x),$ $d(x_0,y)\}-\varepsilon$ for all $t\in[0,1]$. Let $\nu>0$, and choose a sequence $0=t_1<t_2<\cdots<t_n=1$ such that $d(\gamma(t_i),\gamma(t_{i+1}))$ $\leqslant \nu$ for every $1\leqslant i\leqslant n-1$. Then
	\begin{align*}
	(\gamma(t_i),\gamma(t_{i+1}))_{x_0} & =\frac{1}{2}(d(x_0,\gamma(t_i))+d(x_0,\gamma(t_{i+1}))-d(\gamma(t_i),\gamma(t_{i+1})))
	\\ & \geqslant \frac{1}{2}(2\min\{d(x_0,x),d(x_0,y)\}-2\varepsilon-\nu)
	\\ & =\min\{d(x_0,x),d(x_0,y)\}-\varepsilon-\frac{\nu}{2}.
	\end{align*}
	It follows that $(x,y)'_{x_0}\geqslant\min\{d(x_0,x),d(x_0,y)\}-\varepsilon-\frac{\nu}{2}$, so as $\varepsilon>0$ and $\nu>0$ were arbitrary, we can conclude that $(x,y)'_{x_0}\geqslant\min\{d(x_0,x),d(x_0,y)\}$, and we therefore have equality.
\end{proof}

\begin{cor}
	\label{OneEndedExample}
	If $X$ is a geodesic metric space such that for every $r\in[0,\infty)$ and every $x,y\in S(x_0,r)$ there exists a path $\gamma$ between $x$ and $y$ such that $d(x_0,\gamma)\geqslant r$, then $T_X$ is isometric to $[0,\infty)$.
\end{cor}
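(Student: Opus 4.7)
The plan is to exhibit an explicit isometry $\phi:T_X\to[0,\infty)$ given by $\phi([x])=d(x_0,x)$, using \Cref{Paths} to handle the metric calculation and the spherical path hypothesis as the input to that lemma. Three things need checking: well-definedness, distance preservation, and surjectivity.

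First I would verify that $\phi$ is well-defined. If $[x]=[y]$, then $d'(x,y)=0$, so $d(x_0,x)+d(x_0,y)=2(x,y)'_{x_0}$. Combining this with the bound $(x,y)'_{x_0}\leqslant\min\{d(x_0,x),d(x_0,y)\}$ from \Cref{PMLem1} forces $d(x_0,x)=d(x_0,y)$, as the two sides agree only when both distances coincide with the minimum. For distance preservation, fix $x,y\in X$ and assume without loss of generality that $r:=d(x_0,x)\leqslant d(x_0,y)$. The key step is producing, for every $\varepsilon>0$, a path from $x$ to $y$ staying at distance at least $r-\varepsilon$ (in fact at least $r$) from $x_0$, so that \Cref{Paths} applies and gives $d^*([x],[y])=|d(x_0,x)-d(x_0,y)|$. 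To build such a path, take a geodesic $[x_0,y]$ and let $y'$ be the point on it with $d(x_0,y')=r$. Then $x,y'\in S(x_0,r)$, so the hypothesis furnishes a path $\alpha$ from $x$ to $y'$ with $d(x_0,\alpha)\geqslant r$. Concatenating $\alpha$ with the sub-geodesic from $y'$ to $y$, which satisfies $d(x_0,\cdot)\geqslant r$ since $y'$ is the nearest point of that sub-geodesic to $x_0$, produces the required path.

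For surjectivity, note that geodesicity of $X$ implies that the image $\phi(T_X)=\{d(x_0,x):x\in X\}$ is a sub-interval of $[0,\infty)$ containing $0$; the statement is thus to be read with the tacit assumption that $X$ is unbounded from $x_0$, in which case every $r\in[0,\infty)$ arises as $d(x_0,z)$ for some $z$ on a geodesic $[x_0,x]$ with $d(x_0,x)\geqslant r$. No real obstacle stands in the way: all the work has been absorbed into \Cref{Paths}, and the hypothesis on spheres is tailored to supply exactly the input that lemma requires.
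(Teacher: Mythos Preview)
Your proof is correct and follows exactly the route the paper intends: the corollary is stated without proof immediately after \Cref{Paths}, and your argument is precisely the natural elaboration—use the sphere hypothesis plus a geodesic tail to feed \Cref{Paths} for arbitrary pairs, then read off that $[x]\mapsto d(x_0,x)$ is an isometry. Your observation that surjectivity onto all of $[0,\infty)$ tacitly requires $X$ to be unbounded from $x_0$ is apt; the paper's intended examples ($\mathbb{H}^n$, $\mathbb{R}^n$) satisfy this, but the hypothesis as stated is vacuous on large spheres in a bounded space, so strictly speaking the conclusion should be that $T_X$ is isometric to the interval $\{d(x_0,x):x\in X\}$.
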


We would like to show that the reverse implication to \Cref{Paths} also holds. Note that the $\varepsilon>0$ is necessary for this to be the case. For example, if we consider $X$ to be an open ball around some $x_0$ in $\mathbb{R}^2$ with two points $x$ and $y$ added from its boundary, then it can be seen that those boundary points would still collapse together in $T_X$ despite the lack of any path such that $d(x_0,\gamma(t))\geqslant\min\{d(x_0,x),d(x_0,y)\}$.

To show the reverse implication to \Cref{Paths}, we will need to look at the relationship between the path components of $X\backslash B(x_0,r)$ and $T_X\backslash B([x_0],r)$.

\begin{lem}
	\label{PathComponents}
	Let $(X,d)$ be a geodesic metric space. Let $x,y\in X$, and let $r\in[0,\infty)$. Then $x$ and $y$ lie in the same path component of $X\backslash B(x_0,r)$ if and only if $([x],[y])^*_{[x_0]}> r$.
\end{lem}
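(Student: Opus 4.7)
The plan is to prove each direction essentially constructively, using the sup definition of $(x,y)'_{x_0}$ together with the inequality $(x,y)_{x_0}\leqslant d(x_0,[x,y])$ from \Cref{Aux2}. The key observation is that a sequence $x_1,\ldots,x_n$ witnessing a large value of $(x,y)'_{x_0}$ can be ``thickened'' into a genuine path in $X$ by concatenating geodesic segments $[x_i,x_{i+1}]$, each of which must avoid $B(x_0,r)$, while conversely an actual path can be discretised into such a sequence.

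For the reverse direction ($([x],[y])^*_{[x_0]}>r$ implies $x,y$ are in the same path component of $X\setminus B(x_0,r)$), first use the definition of supremum to choose a sequence $x=x_1,\ldots,x_n=y$ in $X$ such that $(x_i,x_{i+1})_{x_0}>r$ for every $i$. In particular, applying the triangle inequality in the form $(x,x_2)_{x_0}\leqslant d(x_0,x)$ gives $d(x_0,x)>r$, and similarly for $y$, so the endpoints actually lie in $X\setminus B(x_0,r)$. Then for each $i$ choose a geodesic $[x_i,x_{i+1}]$; by \Cref{Aux2} every point $z$ on this geodesic satisfies $d(x_0,z)\geqslant d(x_0,[x_i,x_{i+1}])\geqslant (x_i,x_{i+1})_{x_0}>r$, so the concatenation is a path from $x$ to $y$ staying entirely in $X\setminus B(x_0,r)$.

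For the forward direction, suppose $\gamma:[0,1]\to X\setminus B(x_0,r)$ is a path from $x$ to $y$. By compactness of $\gamma([0,1])$ and continuity of $d(x_0,\cdot)$, the minimum value $c=\min_{t}d(x_0,\gamma(t))$ is attained, and since $\gamma$ avoids the closed ball we have $c>r$. Set $\varepsilon=c-r>0$. By uniform continuity, for any $\nu>0$ we may choose $0=t_1<\cdots<t_n=1$ with $d(\gamma(t_i),\gamma(t_{i+1}))\leqslant\nu$, and then
\begin{equation*}
(\gamma(t_i),\gamma(t_{i+1}))_{x_0}\geqslant\tfrac{1}{2}(2c-\nu)=c-\tfrac{\nu}{2}.
\end{equation*}
Choosing $\nu<2\varepsilon$ makes this value strictly greater than $r$ for every $i$, so the sequence $\gamma(t_1),\ldots,\gamma(t_n)\in S_{x,y}$ witnesses $(x,y)'_{x_0}>r$, i.e.\ $([x],[y])^*_{[x_0]}>r$.

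Neither direction looks genuinely hard; the mild subtlety is in the reverse direction, where one has to remember that a supremum strictly exceeding $r$ does furnish an individual sequence whose minimum also strictly exceeds $r$ (true because if the sup is $>r$ it is $\geqslant r+\delta$ for some $\delta>0$, and hence some sequence has minimum $\geqslant r+\delta/2$), and in the forward direction, where the argument uses that $B(x_0,r)$ is the \emph{closed} ball so that a path in its complement has a strictly positive distance from it by compactness. Both points mirror exactly the reasoning already used in \Cref{Paths}, which is why that lemma is stated with an $\varepsilon$.
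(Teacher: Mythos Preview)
Your proof is correct and follows essentially the same approach as the paper: the forward direction is identical (compactness of $\gamma([0,1])$ to get a positive gap, then discretise), and for the other direction both arguments hinge on \Cref{Aux2} applied to geodesic segments $[x_i,x_{i+1}]$. The only cosmetic difference is that you argue directly (a witnessing sequence yields a concatenated path in $X\setminus B(x_0,r)$) while the paper takes the contrapositive (if $x,y$ are separated, every sequence has a pair with product $\leqslant r$); the content is the same.
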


\begin{proof}
	Suppose $x$ and $y$ lie in the same path component of $X\backslash B(x_0,r)$. Then there exist $\delta>0$ and a path $\gamma:[0,1]\to X$ between them such that $d(x_0,\gamma(t))\geqslant r+\delta$ for all $t\in[0,1]$. The existence of such a $\delta$ follows from the fact that $\gamma$ is continuous, as this implies that $\gamma([0,1])$ is a compact subset of $X\backslash B(x_0,r)$. If there existed a sequence of points $(x_n)$ in $\gamma([0,1])$ such that $d(x_0,x_n)\to r$, then there would exist $x\in\gamma([0,1])$ such that $d(x_0,x)= r$, a contradiction. Therefore we can apply the same argument as in the proof of \Cref{Paths} to get that $([x],[y])^*_{[x_0]}=(x,y)'_{x_0}\geqslant r+\delta>r$.
	
	Now suppose $x$ and $y$ do not lie in the same path component of $X\backslash B(x_0,r)$. If $x$ or $y$ lies in $B(x_0,r)$, then it is immediate that
	\begin{equation*}
	([x],[y])^*_{[x_0]}=(x,y)'_{x_0}\leqslant \min\{d(x_0,x),d(x_0,y)\}\leqslant r.
	\end{equation*}
	If $x$ and $y$ lie in different path components of $X\backslash B(x_0,r)$, then any geodesic $[x,y]$ must intersect $B(x_0,r)$. Let $z\in[x,y]\cap B(x_0,r)$. As $d(x_0,z)\leqslant r$, by \Cref{Aux2} we have that $(x,y)_{x_0} \leqslant r$.
	
	Let $(x_1,\ldots,x_n)$ be any sequence in $S_{x,y}$. As $x_1=x$ and $x_n=y$, and $x$ and $y$ lie in different path components of $X\backslash B(x_0,r)$, there must be some pair $x_i,x_{i+1}$ such that $x_i\in B(x_0,r)$, or $x_{i+1}\in B(x_0,r)$, or $x_i$ and $x_{i+1}$ lie in different path components of $X\backslash B(x_0,r)$. Therefore $(x_i,x_{i+1})_{x_0}\leqslant r$, so as this sequence in $S_{x,y}$ was arbitrary we must have that $([x],[y])^*_{[x_0]}=(x,y)'_{x_0}\leqslant r$.
\end{proof}

\begin{rem}
	This would not hold in general if we instead removed an open ball and considered when $([x],[y])^*_{[x_0]}\geqslant r$. To see this, we can again look at the example of $X$ being an open ball in $\mathbb{R}^2$ with two points added from its boundary.
\end{rem}

\begin{cor}
	\label{PathCompCor}
	Let $(X,d)$ be a geodesic metric space. Let $x,y\in X$, and let $r\in[0,\infty)$. We then have that $x$ and $y$ lie in the same path component of $X\backslash B(x_0,r)$ if and only if $[x]$ and $[y]$ lie in the same path component of $T_X\backslash B([x_0],r)$.
\end{cor}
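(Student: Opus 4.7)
The plan is to reduce both sides of the claimed equivalence to the same inequality involving the Gromov product in $T_X$, and then simply combine. The left-hand side is already handled by \Cref{PathComponents}: applied to $X$ with basepoint $x_0$, it gives that $x$ and $y$ lie in the same path component of $X\setminus B(x_0,r)$ if and only if $([x],[y])^*_{[x_0]} > r$.

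For the right-hand side I would work directly in $T_X$, using that by \Cref{Tree} it is an $\mathbb{R}$-tree. As noted in the remark following the characterisation of $\mathbb{R}$-trees as 0-hyperbolic geodesic spaces, in this setting the Gromov product $([x],[y])^*_{[x_0]}$ equals the distance from $[x_0]$ to the unique geodesic $[[x],[y]]$. Hence this geodesic meets the closed ball $B([x_0],r)$ precisely when $([x],[y])^*_{[x_0]}\leqslant r$, and in the complementary case the geodesic itself supplies a path from $[x]$ to $[y]$ inside $T_X\setminus B([x_0],r)$. Conversely, removing any interior point of $[[x],[y]]$ in an $\mathbb{R}$-tree already disconnects $[x]$ from $[y]$, so if the geodesic enters $B([x_0],r)$ then every path from $[x]$ to $[y]$ in $T_X$ must meet the ball. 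This yields that $[x]$ and $[y]$ lie in the same path component of $T_X\setminus B([x_0],r)$ if and only if $([x],[y])^*_{[x_0]} > r$.

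Combining the two equivalences gives the corollary. The only minor point is the strict-versus-weak distinction on the $T_X$ side: when $([x],[y])^*_{[x_0]}=r$ the closest point of $[[x],[y]]$ to $[x_0]$ lies on the sphere $S([x_0],r)$, which is contained in the closed ball, so the geodesic really does meet $B([x_0],r)$ and the equivalences line up correctly. I do not anticipate any substantive obstacle; the corollary is essentially a formal consequence of \Cref{PathComponents} together with the $\mathbb{R}$-tree structure of $T_X$.
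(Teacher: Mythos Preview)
Your proposal is correct and follows essentially the same route as the paper: both reduce each side to the condition $([x],[y])^*_{[x_0]}>r$, using \Cref{PathComponents} for the $X$ side and the $\mathbb{R}$-tree structure of $T_X$ for the other. The paper's proof simply states the tree-side equivalence in one clause, whereas you spell out the argument via the identification of the Gromov product with the distance to the geodesic; the content is the same.
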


\begin{proof}
	As $T_X$ is a geodesic tree, $[x]$ and $[y]$ lie in the same path component of $T_X\backslash B([x_0],r)$ if and only if $([x],[y])^*_{[x_0]}> r$, which by \Cref{PathComponents} is true if and only if $x$ and $y$ lie in the same path component of $X\backslash B(x_0,r)$.
\end{proof}

We can now give an alternative definition of the metric $d^*$ from the geometry of $X$.

\begin{cor}
	\label{AltGromovProduct}
	Let $X$ be a geodesic metric space, and let $x,y\in X$. Then $([x],[y])^*_{[x_0]}=\sup\{r\in[0,\infty):x\text{ and }y\text{ lie in the same path component of }$ $X\backslash B(x_0,r)\}$.
\end{cor}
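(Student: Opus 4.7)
The statement is essentially a repackaging of \Cref{PathComponents}, so the plan is to unpack the supremum and apply that lemma twice, once for each inequality.

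First I would introduce the abbreviations $s := ([x],[y])^*_{[x_0]}$ and
\begin{equation*}
    R := \sup\{r\in[0,\infty) : x\text{ and }y\text{ lie in the same path component of }X\setminus B(x_0,r)\},
\end{equation*}
with the convention that $R=0$ if the set is empty (noting that $r=0$ always works when $x,y\neq x_0$, and the trivial cases where $x=x_0$ or $y=x_0$ can be handled separately, giving $s=R=0$ in those cases since neither $x$ nor $y$ can ever lie outside $B(x_0,0)$ when equal to $x_0$). The goal is then to show $R = s$.

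For the inequality $R \geqslant s$, I would fix any $r < s$ and apply \Cref{PathComponents}: since $([x],[y])^*_{[x_0]} = s > r$, the lemma asserts that $x$ and $y$ lie in the same path component of $X\setminus B(x_0,r)$. Thus every such $r$ belongs to the set over which the supremum is taken, so $R \geqslant r$. Taking $r \to s^-$ yields $R \geqslant s$.

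For the reverse inequality $R \leqslant s$, I would take any $r$ in the defining set. By \Cref{PathComponents} applied in the other direction, $x$ and $y$ lying in the same path component of $X\setminus B(x_0,r)$ forces $([x],[y])^*_{[x_0]} > r$, i.e.\ $s > r$. Hence $s$ is an upper bound for the defining set, so $s \geqslant R$. Combining the two inequalities gives $R = s$, which is exactly the claim. There is no real obstacle here — all the content is in \Cref{PathComponents}, and this corollary just rephrases it as a supremum identity.
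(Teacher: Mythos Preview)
Your proof is correct and rests on the same lemma as the paper's, namely \Cref{PathComponents}. The paper routes the argument through $T_X$ --- identifying $([x],[y])^*_{[x_0]}$ with the distance from $[x_0]$ to the branch point of $[[x_0],[x]]$ and $[[x_0],[y]]$, then invoking \Cref{PathCompCor} --- whereas you apply \Cref{PathComponents} directly to read off that the defining set equals $\{r\geqslant 0 : r < s\}$. Your route is marginally cleaner; the content is identical.

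One small slip: your parenthetical claim that ``$r=0$ always works when $x,y\neq x_0$'' is false in general (take $X$ a tree with $x$ and $y$ on different branches from $x_0$; then $X\setminus\{x_0\}$ separates them). This does not affect the argument, since when $s=0$ the set is empty and your stated convention gives $R=0$ regardless, but you should drop that remark.
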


\begin{proof}
	This follows from the fact that $T_X$ is a tree, so $([x],[y])^*_{[x_0]}=d^*([x_0],[z])$, where $[z]\in T_X$ is the unique point such that $[[x_0],[z]]=[[x_0],[x]]\cap[[x_0],[y]]$. We then simply have to notice that $d^*([x_0],[z])=\sup\{r\in[0,\infty):[x]\text{ and }[y]\text{ lie in the same path component of }$ $T_X\backslash B([x_0],r)\}$, and apply \Cref{PathCompCor}.
\end{proof}

\begin{rem}
	\label{XRTree}
	If $X$ is an $\mathbb{R}$-tree, then this confirms that $T_X=X$, as \Cref{AltGromovProduct} tells us that $(x,y)_{x_0}=([x],[y])^*_{[x_0]}$, so $d(x,y)=d^*([x],[y])$.
\end{rem}

\begin{rem}
	Recall that $([x],[y])^*_{[x_0]}=(x,y)'_{x_0}$. We can use this along with \Cref{Aux1} and \Cref{Aux2} to give an alternative proof of the fact that when $X$ is a quasi-tree, the inequality $(x,y)_{x_0}\leqslant (x,y)'_{x_0}\leqslant (x,y)_{x_0}+\Delta+2\delta$ holds, where $\Delta\geqslant 0$ is the bottleneck constant and $\delta\geqslant 0$ is the hyperbolicity constant.
\end{rem}

We can now obtain our improvement of \Cref{Paths}.

\begin{cor}
	\label{PathsImproved}
	Let $(X,d)$ be a geodesic metric space, and let $x,y\in X$. We have that $d^*([x],[y])=|d(x_0,x)-d(x_0,y)|$ if and only if for every $\varepsilon>0$ there exists a path $\gamma:[0,1]\to X$ from $x$ to $y$ such that $d(x_0,\gamma(t))\geqslant\min\{d(x_0,x),d(x_0,y)\}-\varepsilon$ for all $t\in[0,1]$.
\end{cor}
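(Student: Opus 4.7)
The forward direction ($\Leftarrow$) is already established by \Cref{Paths}, so the plan is to prove only the converse. The key observation is that the equality in the hypothesis is precisely an equality of Gromov products under $d^*$, which \Cref{PathComponents} has already translated into a statement about path components of $X\setminus B(x_0,r)$. So the argument is essentially a repackaging of that lemma.

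First I would unpack the hypothesis. Since $d^*([x],[y])=d(x_0,x)+d(x_0,y)-2(x,y)'_{x_0}$, the equality $d^*([x],[y])=|d(x_0,x)-d(x_0,y)|$ is equivalent to $(x,y)'_{x_0}=\min\{d(x_0,x),d(x_0,y)\}$. Recalling from the proof of \Cref{ZeroHyp} that $(x,y)'_{x_0}=([x],[y])^*_{[x_0]}$, this becomes $([x],[y])^*_{[x_0]}=\min\{d(x_0,x),d(x_0,y)\}$.

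Now I apply \Cref{PathComponents}, which says that $x$ and $y$ lie in the same path component of $X\setminus B(x_0,r)$ if and only if $([x],[y])^*_{[x_0]}>r$. Consequently, for every $r<\min\{d(x_0,x),d(x_0,y)\}$ there is a path from $x$ to $y$ in $X\setminus B(x_0,r)$. Given $\varepsilon>0$, if $\varepsilon\geqslant\min\{d(x_0,x),d(x_0,y)\}$ then any path from $x$ to $y$ (which exists since $X$ is geodesic) trivially satisfies $d(x_0,\gamma(t))\geqslant 0\geqslant\min\{d(x_0,x),d(x_0,y)\}-\varepsilon$. Otherwise, setting $r=\min\{d(x_0,x),d(x_0,y)\}-\varepsilon$, the path from $x$ to $y$ in $X\setminus B(x_0,r)$ produced by \Cref{PathComponents} satisfies $d(x_0,\gamma(t))>r=\min\{d(x_0,x),d(x_0,y)\}-\varepsilon$, giving the desired bound.

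There is no real obstacle here; once \Cref{PathComponents} is in hand, the corollary is just a reformulation of it together with the trivial identification of $(x,y)'_{x_0}$ with the Gromov product in $T_X$. The only minor care needed is handling the degenerate case where $\min\{d(x_0,x),d(x_0,y)\}-\varepsilon$ would be negative, which is dispatched by invoking geodesicity of $X$ to produce any path at all.
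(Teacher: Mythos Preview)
Your argument is correct and essentially the same as the paper's: both reduce the equality to $([x],[y])^*_{[x_0]}=\min\{d(x_0,x),d(x_0,y)\}$ and then read off the existence of the required paths from the path-component description of this Gromov product. The only cosmetic difference is that the paper cites \Cref{AltGromovProduct} (itself an immediate consequence of \Cref{PathComponents}) while you invoke \Cref{PathComponents} directly and spell out the degenerate case; also note that your labelling of ``forward direction'' for $\Leftarrow$ is reversed relative to the paper's convention.
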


\begin{proof}
	If $d^*([x],[y])=|d(x_0,x)-d(x_0,y)|$, then $([x],[y])^*_{[x_0]}=\min\{d(x_0,x),d(x_0,y)\}$, so the forward implication follows from \Cref{AltGromovProduct}. The reverse implication is \Cref{Paths}.
\end{proof}

As a particular case of this, we can see that two points in $X$ will collapse together in $T_X$ if and only if they lie on the same sphere and there exist appropriate paths between them, as mentioned at the end of Section 3.

\begin{cor}
	\label{Collapse}
	Let $(X,d)$ be a geodesic metric space, and let $x,y\in X$. We have that $[x]=[y]$ if and only if there exists $r\in [0,\infty)$ such that $x,y\in S(x_0,r)$, and for every $\varepsilon>0$, there exists a path $\gamma$ between them in $X$ such that $d(x_0,\gamma)\geqslant r-\varepsilon$.
\end{cor}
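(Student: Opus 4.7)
The plan is to deduce this corollary almost directly from \Cref{PathsImproved}, treating it as a refinement of that statement under the constraint $d^*([x],[y]) = 0$. The only real content beyond invoking \Cref{PathsImproved} is to extract the common radius $r$ on which both $x$ and $y$ must lie.

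For the forward direction, suppose $[x] = [y]$, so $d^*([x],[y]) = 0$. Recall from the proof of \Cref{ZeroHyp} that $d^*([x_0],[x]) = d(x_0,x)$ and similarly for $y$. Applying the reverse triangle inequality in $T_X$ gives
\begin{equation*}
|d(x_0,x) - d(x_0,y)| = |d^*([x_0],[x]) - d^*([x_0],[y])| \leqslant d^*([x],[y]) = 0,
\end{equation*}
so $d(x_0,x) = d(x_0,y) =: r$ and in particular $x, y \in S(x_0, r)$. Since $d^*([x],[y]) = 0 = |d(x_0,x) - d(x_0,y)|$, \Cref{PathsImproved} then supplies, for every $\varepsilon > 0$, a path $\gamma$ from $x$ to $y$ with $d(x_0, \gamma(t)) \geqslant \min\{d(x_0,x), d(x_0,y)\} - \varepsilon = r - \varepsilon$.

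For the reverse direction, suppose $x, y \in S(x_0, r)$ and that the paths exist as described. Then $\min\{d(x_0,x), d(x_0,y)\} = r$, so the hypothesis of \Cref{PathsImproved} is satisfied. That corollary yields $d^*([x],[y]) = |d(x_0,x) - d(x_0,y)| = 0$, and hence $[x] = [y]$ by definition of the equivalence relation defining $T_X$.

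There is essentially no obstacle here: the whole argument is bookkeeping around \Cref{PathsImproved}, and the only step worth spelling out carefully is the deduction of a common sphere in the forward direction, which follows immediately from the fact that the end-approximating map restricts to an isometry on any geodesic from $x_0$ (so $d^*([x_0],[x]) = d(x_0,x)$) together with the triangle inequality in $T_X$.
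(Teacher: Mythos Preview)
Your proof is correct and follows essentially the same approach as the paper: both deduce $d(x_0,x)=d(x_0,y)$ from $[x]=[y]$ (the paper simply notes $d^*([x_0],[x])=d^*([x_0],[y])$, while you spell this out via the reverse triangle inequality) and then invoke \Cref{PathsImproved} for both directions.
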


\begin{proof}
	Suppose $[x]=[y]$. Then $d^*([x_0],[x])=d^*([x_0],[y])$, so $d(x_0,x)=d(x_0,y)$, and so $d^*([x],[y])=|d(x_0,x)-d(x_0,y)|$. Both directions now follow from \Cref{PathsImproved}.
\end{proof}

\begin{rem}
	Suppose that two geodesic rays $\gamma_1:[0,\infty)\to X$ and $\gamma_2:[0,\infty)\to X$ based at $x_0$ have the property that for every $t\in [0,\infty)$ and $\varepsilon>0$, the points $\gamma_1(t)$ and $\gamma_2(t)$ are in the same path component of $X\backslash B(x_0,t-\varepsilon)$. Then \Cref{Collapse} means that $[\gamma_1(t)]=[\gamma_2(t)]$ for every $t\in[0,\infty)$, and, in particular, $\gamma_1$ and $\gamma_2$ are sent to the same geodesic ray in $T_X$.
\end{rem}

\subsection{Ends of $T_X$ for proper metric spaces}

In this subsection we will show that, when $X$ is a proper metric space, we can think of $T_X$ as $X$ with each of its ends collapsed into one geodesic ray. This justifies our description of $T_X$ as the end-approximating tree of a metric space. In particular, we will show that the space of ends of $X$ is homeomorphic to the space of ends of $T_X$, and as a result is homeomorphic to the space of ends of a locally finite simplicial tree. These results are somewhat obvious, but we include them for completeness.

We will define the space of ends for a proper geodesic metric space, for the more general definition in topological spaces see \cite[p.~144]{Bridson1999}. As before, we will assume a fixed basepoint $x_0\in X$.

\begin{defn}
	Let $X$ be a proper metric space. A \emph{proper ray} in $X$ is a continuous map $\gamma:[0,\infty)\to X$ such that for every $r\in[0,\infty)$, the set $\gamma^{-1}(B(x_0,r))$ is bounded.
\end{defn}

\begin{defn}
	Let $X$ be a proper metric space, and let $\gamma_1$ and $\gamma_2$ be proper rays in $X$. We say that $\gamma_1$ and $\gamma_2$ \emph{converge to the same end} if for every $r\in[0,\infty)$, there exists $t\in[0,\infty)$ such that $\gamma_1([t,\infty))$ and $\gamma_2([t,\infty))$ are contained in the same path component of $X\backslash B(x_0,r)$. This defines an equivalence relation on the proper rays in $X$. We denote the equivalence class of a proper ray $\gamma$ by $\text{end}(\gamma)$.
\end{defn}

The following lemma can be found in \cite{Bridson1999}. We will mimic its proof for \Cref{Surjective}.

\begin{lem}
	\emph{\cite[Lemma~I.8.28]{Bridson1999}}
	\label{GeodesicRep}
	Let $X$ be a proper geodesic metric space. For every proper ray $\gamma$ in $X$, there exists a geodesic ray $\gamma'$ based at $x_0\in X$ such that $\gamma'\in\emph{end}(\gamma)$.
\end{lem}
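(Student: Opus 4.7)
The plan is to follow a standard Arzelà–Ascoli argument to produce the candidate geodesic ray $\gamma'$, and then to use the properness of $\gamma$ together with uniform convergence on compact sets to verify that $\gamma' \in \mathrm{end}(\gamma)$.

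First I would build $\gamma'$. Since $\gamma$ is a proper ray, I can pick a sequence $t_n \to \infty$ with $L_n := d(x_0, \gamma(t_n)) \to \infty$: indeed, if $d(x_0, \gamma(t))$ stayed bounded by some $R$, then $\gamma^{-1}(B(x_0, R)) = [0,\infty)$ would not be bounded, contradicting properness. For each $n$, choose a geodesic $\sigma_n \colon [0, L_n] \to X$ from $x_0$ to $\gamma(t_n)$, and extend $\sigma_n$ arbitrarily (say, constantly) to $[0,\infty)$. Each $\sigma_n$ is $1$-Lipschitz and pointwise lies in the closed ball $B(x_0, s)$ on $[0,s]$, which is compact by properness of $X$. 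A standard Arzelà–Ascoli plus diagonal argument then produces a subsequence (still denoted $\sigma_n$) converging uniformly on compact subsets of $[0,\infty)$ to a $1$-Lipschitz map $\gamma' \colon [0,\infty) \to X$ with $\gamma'(0) = x_0$. Since $d(\sigma_n(s), \sigma_n(s')) = |s-s'|$ whenever $s,s' \leqslant L_n$, passing to the limit gives $d(\gamma'(s),\gamma'(s')) = |s-s'|$, so $\gamma'$ is a geodesic ray based at $x_0$.

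The main obstacle is verifying that $\gamma'$ lies in the same end as $\gamma$. Fix $r \in [0,\infty)$; I want $T$ such that $\gamma([T,\infty))$ and $\gamma'([T,\infty))$ are in the same path component of $X \setminus B(x_0, r)$. Choose $r' > r + 1$, so that the open ball $B(\gamma'(r'), 1)$ lies entirely outside $B(x_0, r)$ (since any point $z$ within distance $1$ of $\gamma'(r')$ has $d(x_0,z) \geqslant r' - 1 > r$). By properness of the ray $\gamma$, the set $\gamma^{-1}(B(x_0, r))$ is bounded, so there exists $T_1$ with $\gamma((T_1, \infty)) \subset X \setminus B(x_0, r)$. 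Now choose $n$ large enough that (i) $L_n > r'$ and $t_n > T_1$, and (ii) $d(\sigma_n(r'), \gamma'(r')) < 1$; the first uses $L_n, t_n \to \infty$ and the second uses uniform convergence on $[0,r']$.

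Finally I assemble the path component. The restriction $\sigma_n|_{[r', L_n]}$ lies outside $B(x_0, r)$ because $d(x_0, \sigma_n(s)) = s > r$ for $s \in [r', L_n]$, and it ends at $\sigma_n(L_n) = \gamma(t_n)$. The short geodesic from $\sigma_n(r')$ to $\gamma'(r')$ has length less than $1$, so it stays inside $B(\gamma'(r'), 1)$ and hence outside $B(x_0, r)$. The ray $\gamma'|_{[r',\infty)}$ also stays outside $B(x_0, r)$ since $d(x_0, \gamma'(s)) = s > r$. Concatenating these three paths connects $\gamma'(r')$ to $\gamma(t_n)$ inside $X \setminus B(x_0, r)$, and then $\gamma|_{[T_1, t_n]}$ and $\gamma|_{[t_n, \infty)}$ give path-connection to all of $\gamma([T_1, \infty))$, while $\gamma'|_{[r', \infty)}$ gives path-connection to all of $\gamma'([r', \infty))$. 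Setting $T = \max(T_1, r')$ proves that $\gamma([T,\infty))$ and $\gamma'([T,\infty))$ lie in the same path component of $X \setminus B(x_0, r)$, so $\gamma' \in \mathrm{end}(\gamma)$, as required.
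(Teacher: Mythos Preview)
Your proof is correct and follows essentially the same Arzel\`a--Ascoli argument that the paper cites from \cite{Bridson1999} and later mimics in the proof of \Cref{Surjective}: construct geodesics to points $\gamma(t_n)$ with $d(x_0,\gamma(t_n))\to\infty$, extend constantly, extract a convergent subsequence, and check the limit is a geodesic ray in the same end. The only cosmetic point is that with $T=\max(T_1,r')$ you could have $T=T_1$, in which case $\gamma(T_1)$ might lie on $S(x_0,r)$; taking $T>\max(T_1,r')$ strictly avoids this.
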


The proof of the above lemma relies on the Arzel\`{a}-Ascoli theorem, we state the necessary version of it here. For a stronger version, see \cite[p.~290]{Munkres2000}.

\begin{thm}[Arzel\`{a}-Ascoli theorem]
	\label{AA}
	Let $X$ and $Y$ be metric spaces, and let $\mathcal{F}$ be a subset of $\{f:X\to Y:f\text{ continuous}\}$. If $\mathcal{F}$ is equicontinuous, and the set $\mathcal{F}_x=\{f(x):f\in\mathcal{F}\}$ has compact closure in $Y$ for every $x\in X$, then a subsequence of $\mathcal{F}$ converges pointwise to a continuous function $f:X\to Y$.
\end{thm}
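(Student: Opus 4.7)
The plan is to combine a standard Cantor-style diagonal extraction with the equicontinuity hypothesis, exactly as in the classical proof. I will read the statement with $\mathcal{F}$ regarded as a sequence $(f_n)_{n\in\mathbb{N}}$ (which is all that is needed for the application in \Cref{GeodesicRep}), and assume a countable dense subset $D=\{x_k:k\in\mathbb{N}\}\subset X$ is available — a mild restriction satisfied in the intended setting, since proper metric spaces are separable.

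First I would perform a diagonal extraction over $D$. The sequence $(f_n(x_1))_n$ lies in $\mathcal{F}_{x_1}$, whose closure is compact in $Y$, so there is a convergent subsequence $(f^{(1)}_n)_n$ of $(f_n)_n$. Using compactness of $\overline{\mathcal{F}_{x_2}}$, I extract from $(f^{(1)}_n)_n$ a further subsequence $(f^{(2)}_n)_n$ converging at $x_2$, and so on. The diagonal $g_n:=f^{(n)}_n$ is then a subsequence of the original sequence with the property that $(g_n(x_k))_n$ converges in $Y$ for every $k\in\mathbb{N}$.

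Next I would upgrade pointwise convergence on $D$ to pointwise convergence on all of $X$ using equicontinuity. Given $x\in X$ and $\varepsilon>0$, equicontinuity of $\mathcal{F}$ produces $\delta>0$ such that $d_Y(f(x),f(y))<\varepsilon/3$ for every $f\in\mathcal{F}$ whenever $d_X(x,y)<\delta$; in particular this applies to each $g_n$. Pick $x_k\in D$ with $d_X(x,x_k)<\delta$. Since $(g_n(x_k))_n$ converges, it is Cauchy, so for $n,m$ sufficiently large $d_Y(g_n(x_k),g_m(x_k))<\varepsilon/3$. A three-term triangle inequality gives $d_Y(g_n(x),g_m(x))<\varepsilon$, so $(g_n(x))_n$ is Cauchy in $Y$; because it is contained in the compact set $\overline{\mathcal{F}_x}$ it converges, and I set $f(x):=\lim_n g_n(x)$. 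Continuity of $f$ follows by passing to the limit in $d_Y(g_n(x),g_n(y))<\varepsilon/3$ for $d_X(x,y)<\delta$, yielding $d_Y(f(x),f(y))\leqslant\varepsilon/3<\varepsilon$.

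The main obstacle is really just bookkeeping: the diagonal argument needs a countable dense subset of $X$, and one must be careful that the equicontinuity estimate, originally formulated for the whole family $\mathcal{F}$, survives restriction to the subsequence $(g_n)$ — which it does trivially, since each $g_n\in\mathcal{F}$. All the real work is done by the interplay of compactness of each $\overline{\mathcal{F}_x}$ (providing limits and Cauchy-implies-convergent) with equicontinuity (propagating convergence from the countable dense set to every point and ensuring continuity of the limit).
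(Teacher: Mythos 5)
Your argument is correct, but there is nothing in the paper to compare it against: the paper does not prove this statement, it quotes it as a known result with a pointer to \cite[p.~290]{Munkres2000}, and only ever uses it once, in the proof of \Cref{Surjective}. Your proof is the classical one: diagonal extraction over a countable dense set using sequential compactness of each $\overline{\mathcal{F}_{x_k}}$, an $\varepsilon/3$ argument propagating convergence from the dense set to all of $X$ (with completeness of the compact sets $\overline{\mathcal{F}_x}$ converting Cauchy into convergent), and passage to the limit in the equicontinuity estimate for continuity of the limit function. All three steps are carried out correctly, and your observation that equicontinuity automatically survives restriction to the subsequence is the right (trivial) piece of bookkeeping.

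One point worth stressing: the separability hypothesis you add is not merely a convenience, it is genuinely necessary, so your version is in fact more careful than the statement as displayed. For arbitrary metric $X$ the sequential conclusion fails: take $X=[0,1]$ with the \emph{discrete} metric, $Y=\{0,1\}$, and $f_n(x)$ the $n$-th binary digit of $x$. Every function on a discrete space is continuous, the family is trivially equicontinuous (with $\delta=\tfrac{1}{2}$, $d_X(x,y)<\delta$ forces $y=x$), and each $\mathcal{F}_x$ is finite, yet no subsequence converges pointwise, since for any $n_1<n_2<\cdots$ one may choose $x$ whose binary digits alternate along the $n_k$. Your restriction costs nothing for the intended application: in \Cref{Surjective} the maps $\xi_n$ have domain $[0,\infty)$, which is separable, and satisfy $\{\xi_n(t):n\in\mathbb{N}\}\subset B(x_0,t)$ with $X$ proper, so your version applies verbatim.
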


The space of ends, and the topology on it, are defined as follows.

\begin{defn}
	Let $X$ be a proper metric space. The \emph{space of ends} of $X$ is the set of equivalence classes of proper rays in $X$, denoted $\text{Ends}(X)$. A sequence of ends converges, $\text{end}(\gamma_n)\to\text{end}(\gamma)$, if for every $r\in[0,\infty)$ there exists a sequence of real numbers $(N_n)_{n\in\mathbb{N}}$ such that $\gamma_n([N_n,\infty))$ and $\gamma([N_n,\infty))$ lie in the same path component of $X\backslash B(x_0,r)$ for all $n\in\mathbb{N}$ sufficiently large. The topology on $\text{Ends}(X)$ is induced by this convergence.
\end{defn}

If two proper metric spaces $X$ and $Y$ are quasi-isometric, then it is well known that this quasi-isometry induces a homeomorphism between $\text{Ends}(X)$ and $\text{Ends}(Y)$ \cite[Proposition~I.8.29]{Bridson1999}. This means that when $X$ is a quasi-tree, we automatically get that $\text{Ends}(X)$ and $\text{Ends}(T_X)$ will be homeomorphic. We will show that this is in fact true for every proper metric space $X$.

\begin{lem}
	Let $(X,d)$ be a proper geodesic metric space. Let $f:X\to T_X$ be the end-approximating map. For every proper ray $\gamma:[0,\infty)\to X$, the map $f\circ\gamma:[0,\infty)\to T_X$ is a proper ray.
\end{lem}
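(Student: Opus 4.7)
The plan is to verify the two defining properties of a proper ray for $f\circ\gamma$: continuity and properness of preimages of balls. Both follow almost immediately from properties of $f$ that were established earlier.

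First, $f\circ\gamma$ is continuous since $\gamma$ is continuous by assumption and $f$ is continuous, as noted in the proof of \Cref{Tree}; indeed \Cref{Pseudometric} gives $d^*([x],[y])\leqslant d(x,y)$, so $f$ is $1$-Lipschitz.

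For properness, the key observation is that the end-approximating map preserves distance from the basepoint exactly. Specifically, \Cref{PMLem1} gives $(x_0,x)'_{x_0}\leqslant d(x_0,x_0)=0$, which was used at the start of the proof of \Cref{ZeroHyp} to conclude that
\begin{equation*}
d^*([x_0],[x]) = d'(x_0,x) = d(x_0,x)
\end{equation*}
for every $x\in X$. Consequently, for any $r\in[0,\infty)$ we have
\begin{equation*}
f^{-1}(B([x_0],r)) = \{x\in X : d^*([x_0],[x])\leqslant r\} = \{x\in X : d(x_0,x)\leqslant r\} = B(x_0,r),
\end{equation*}
and therefore
\begin{equation*}
(f\circ\gamma)^{-1}(B([x_0],r)) = \gamma^{-1}(f^{-1}(B([x_0],r))) = \gamma^{-1}(B(x_0,r)),
\end{equation*}
which is bounded because $\gamma$ is a proper ray in $X$.

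There is no substantive obstacle: the statement is essentially the observation that $f$ takes the basepoint ball $B(x_0,r)\subset X$ onto the basepoint ball $B([x_0],r)\subset T_X$ set-theoretically, so properness transfers directly. The only small point to be careful about is invoking the correct earlier result to get $d^*([x_0],[x])=d(x_0,x)$; everything else is formal.
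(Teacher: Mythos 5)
Your proof is correct and follows essentially the same route as the paper's: continuity from the non-expanding property $d^*\leqslant d$, and properness from the identity $f^{-1}(B([x_0],r))=B(x_0,r)$. The only difference is that you spell out the justification $d^*([x_0],[x])=d(x_0,x)$ (via \Cref{ZeroHyp}), which the paper leaves implicit.
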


\begin{proof}
	We first recall that, as $d^*([x],[y])\leqslant d(x,y)$, the function $f$ is continuous. Therefore $f\circ\gamma$ is continuous. Now let $r\in [0,\infty)$, and note that $f^{-1}(B([x_0],r))=B(x_0,r)$. As $\gamma$ is a proper ray, $\gamma^{-1}(f^{-1}(B([x_0],r)))=\gamma^{-1}(B(x_0,r))$ is bounded, so we can conclude that $f\circ\gamma$ is a proper ray.
\end{proof}

\begin{cor}
	\label{Injective}
	Let $(X,d)$ be a proper geodesic metric space. Let $f:X\to T_X$ be the end-approximating map. Let $\gamma_1:[0,\infty)\to X$ and $\gamma_2:[0,\infty)\to X$ be proper rays in $X$. Then $\gamma_2\in\emph{end}(\gamma_1)$ if and only if $f\circ\gamma_2\in\emph{end}(f\circ\gamma_1)$.
\end{cor}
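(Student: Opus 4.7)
The plan is to derive this corollary almost directly from Corollary \ref{PathCompCor}, which already translates path components of $X\backslash B(x_0,r)$ into path components of $T_X\backslash B([x_0],r)$. The definition of ``converges to the same end'' is expressed purely in terms of such path components, so the only real work is in bookkeeping and in passing from pointwise information to a statement about entire tails.

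First I would record the observation, already noted in the proof of Lemma \ref{ZeroHyp}, that $d^*([x_0],[x])=d(x_0,x)$ for every $x\in X$. This gives $f^{-1}(B([x_0],r))=B(x_0,r)$, so $\gamma([t,\infty))\subseteq X\backslash B(x_0,r)$ if and only if $(f\circ\gamma)([t,\infty))\subseteq T_X\backslash B([x_0],r)$. Hence the ``tail lies outside the ball'' part of the definition transfers freely between $X$ and $T_X$, and I only need to compare which path component the tails occupy.

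For the forward direction, assume $\gamma_2\in\text{end}(\gamma_1)$. Given $r\geqslant 0$, choose $t$ such that $\gamma_1([t,\infty))$ and $\gamma_2([t,\infty))$ lie in a common path component of $X\backslash B(x_0,r)$. For any $s_1,s_2\geqslant t$, Corollary \ref{PathCompCor} then gives that $[\gamma_1(s_1)]$ and $[\gamma_2(s_2)]$ lie in the same path component of $T_X\backslash B([x_0],r)$. Since $(f\circ\gamma_i)([t,\infty))$ is the continuous image of the path-connected set $[t,\infty)$, each tail is itself path-connected, and so lies entirely inside a single path component of $T_X\backslash B([x_0],r)$. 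Sharing one pair of representatives across the two tails therefore forces both tails into the same path component. The reverse direction is entirely symmetric: given $f\circ\gamma_2\in\text{end}(f\circ\gamma_1)$, choose $t$ using the hypothesis in $T_X$, apply Corollary \ref{PathCompCor} in the other direction, and upgrade again using path-connectedness of continuous images.

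The main obstacle is essentially nonexistent. Corollary \ref{PathCompCor} does all the geometric work, and the only subtlety is the upgrade from pointwise agreement of path components to agreement for entire tails, which follows at once from the fact that continuous images of intervals are path-connected. The identity $d^*([x_0],[x])=d(x_0,x)$ should be invoked explicitly, however, as without it one cannot cleanly identify the two types of ``tail outside a ball'' conditions.
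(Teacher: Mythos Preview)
Your argument is correct and follows essentially the same route as the paper: both proofs reduce the equivalence directly to Corollary~\ref{PathCompCor}. You are more explicit than the paper in two places---you justify $f^{-1}(B([x_0],r))=B(x_0,r)$ via $d^*([x_0],[x])=d(x_0,x)$, and you spell out the passage from the pointwise statement of Corollary~\ref{PathCompCor} to the statement about entire tails using path-connectedness of continuous images of intervals---whereas the paper simply asserts the biconditional in one line.
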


\begin{proof}
	We have that $\gamma_2\in\text{end}(\gamma_1)$ if and only if for every $r\in[0,\infty)$, there exists $t\in[0,\infty)$ such that $\gamma_1([t,\infty))$ and $\gamma_2([t,\infty))$ lie in the same path component of $X\backslash B(x_0,r)$. By \Cref{PathCompCor}, this is true if and only if for every $r\in[0,\infty)$, there exists $t\in[0,\infty)$ such that $f(\gamma_1([t,\infty)))$ and $f(\gamma_2([t,\infty)))$ lie in the same path component of $T_X\backslash B([x_0],r)$, and this is equivalent to saying that $f\circ\gamma_2\in\text{end}(f\circ\gamma_1)$.
\end{proof}

The proof of the following lemma uses the same idea as the proof of \Cref{GeodesicRep}, as found in \cite{Bridson1999}.

\begin{lem}
	\label{Surjective}
	Let $(X,d)$ be a proper geodesic metric space. Let $f:X\to T_X$ be the end-approximating map. Let $\gamma:[0,\infty)\to T_X$ be a proper ray in $T_X$. Then there exists a geodesic ray $\xi:[0,\infty)\to X$ such that $f\circ\xi\in\emph{end}(\gamma)$.
\end{lem}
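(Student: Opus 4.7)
The plan is to mimic the proof of Lemma~\ref{GeodesicRep}: lift $\gamma$ to a sequence of geodesic segments in $X$, extract a limit via Arzelà--Ascoli, and verify the end condition using the correspondence of path components from Corollary~\ref{PathCompCor}.

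First, since $f$ is surjective I pick, for each $n \in \mathbb{N}$, a representative $y_n \in X$ with $f(y_n) = \gamma(n)$. Recall from the proof of Lemma~\ref{ZeroHyp} that $d^*([x_0],[y]) = d(x_0,y)$ for every $y \in X$, so $d(x_0,y_n) = d^*([x_0],\gamma(n))$. Because $\gamma$ is a proper ray, $\gamma^{-1}(B([x_0],r))$ is bounded for every $r$, which forces $d^*([x_0],\gamma(n)) \to \infty$ and hence $d(x_0,y_n) \to \infty$. For each $n$ let $\xi_n:[0,\infty) \to X$ parametrize a geodesic $[x_0,y_n]$ by arc length on $[0,d(x_0,y_n)]$ and equal $y_n$ on $[d(x_0,y_n),\infty)$, so each $\xi_n$ is $1$-Lipschitz and $\xi_n(t) \in B(x_0,t)$, which is compact since $X$ is proper. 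Theorem~\ref{AA} then produces a subsequence $\xi_{n_k}$ converging pointwise to a continuous map $\xi:[0,\infty) \to X$. For any $s < t$, once $k$ is large enough that $d(x_0,y_{n_k}) \geqslant t$ we have $d(\xi_{n_k}(s),\xi_{n_k}(t)) = t-s$, and taking the limit shows $\xi$ is a geodesic ray based at $x_0$.

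It remains to verify $f \circ \xi \in \emph{end}(\gamma)$. Fix $r \geqslant 0$. Since $\gamma$ is continuous and proper there exists $T$ such that $\gamma([T,\infty)) \subset T_X \setminus B([x_0],r)$; as this image is a continuous path, it lies in a single path component $V$ of $T_X \setminus B([x_0],r)$. By Corollary~\ref{PathCompCor} the component $V$ corresponds to a unique path component $V'$ of $X \setminus B(x_0,r)$, and since $\gamma(n) = f(y_n) \in V$ for $n \geqslant T$, we have $y_n \in V'$ for all such $n$. Now fix $t > r$. For $k$ with $n_k \geqslant T$ and $d(x_0,y_{n_k}) \geqslant t$, the sub-arc $\xi_{n_k}((r,d(x_0,y_{n_k})])$ is connected, lies in $X \setminus B(x_0,r)$, and contains $y_{n_k}$, so it is contained in $V'$; in particular $\xi_{n_k}(t) \in V'$. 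Because $X$ is geodesic it is locally path connected, so I may choose a path connected open neighbourhood $U$ of $\xi(t)$ with $U \subset X \setminus B(x_0,r)$ (using $d(x_0,\xi(t)) = t > r$); for $k$ large, $\xi_{n_k}(t) \in U$, so $\xi(t)$ lies in the same path component of $X \setminus B(x_0,r)$ as $\xi_{n_k}(t)$, namely $V'$. Applying Corollary~\ref{PathCompCor} once more, $f(\xi(t)) \in V$ for every $t > r$, and together with $\gamma([T,\infty)) \subset V$ this yields $f \circ \xi \in \emph{end}(\gamma)$.

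The main obstacle is the final step: promoting the pointwise convergence $\xi_{n_k}(t) \to \xi(t)$ to a statement about path components in $X \setminus B(x_0,r)$, rather than merely about closures. This is where Corollary~\ref{PathCompCor} and the local path connectedness of geodesic spaces do the essential work.
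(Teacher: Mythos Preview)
Your argument is correct and follows the same Arzel\`{a}--Ascoli strategy as the paper: lift points of the ray to $X$, take geodesic segments from $x_0$, and extract a limiting geodesic ray. The difference lies only in the final verification. The paper first replaces $\gamma$ by a geodesic representative $\gamma'\in\text{end}(\gamma)$ (via Lemma~\ref{GeodesicRep} applied in $T_X$), and then uses that $f$ maps each radial geodesic $[x_0,x_{n_k}]$ isometrically onto $[[x_0],\gamma'(n_k)]$; since $T_X$ is a tree and $d^*([x_0],f(\xi_{n_k}(t)))=t$, this forces $f(\xi_{n_k}(t))=\gamma'(t)$ exactly for all large $k$, and continuity of $f$ gives $f\circ\xi=\gamma'$ on the nose. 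This sidesteps your path-component and local-path-connectedness argument entirely. Your route, on the other hand, has the small advantage of not invoking Lemma~\ref{GeodesicRep} in $T_X$, which would require $T_X$ to be proper---a fact the paper only establishes later, in the proof of Proposition~\ref{LocallyFiniteEnds}.
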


\begin{proof}
	We know by \Cref{GeodesicRep} that there exists a geodesic representative $\gamma'\in \text{end}(\gamma)$. For every $n\in\mathbb{N}$, we choose $x_n\in f^{-1}(\gamma'(n))$, then define $\xi_n:[0,\infty)\to X$ by extending the geodesic $[x_0,x_n]$ such that $\xi_n$ is constant on $[d(x_0,x_n),\infty)$. Note that this collection of functions is equicontinuous.
	
	We can also note that for any $t\in[0,\infty)$, we have that $\{\xi_n(t):n\in\mathbb{N}\}\subset B(x_0,t)$, so as $X$ is proper the closure is compact. By the Arzel\`{a}-Ascoli theorem, there exists a subsequence $(\xi_{n_k})_{k\in\mathbb{N}}$ of $(\xi_n)_{n\in\mathbb{N}}$ that converges pointwise to a continuous function $\xi:[0,\infty)\to X$. We want to show that $\xi$ is geodesic and that $f\circ\xi=\gamma'$. The fact that $\xi$ is geodesic follows immediately from the fact that for every $t\in[0,\infty)$, we have that $d(x_0,\xi(t))=\lim_{k\to\infty}d(x_0,\xi_{n_k}(t))$, and that $d(x_0,\xi_{n_k}(t))=t$ for large enough $k$.
	
	As $f$ is continuous, we also get that $f\circ\xi(t)=\lim_{k\to\infty}f\circ\xi_{n_k}(t)$. For $k\in\mathbb{N}$ such that $n_k\geqslant t$, we have that $\xi_{n_k}(t)$ lies on the geodesic $[x_0,x_{n_k}]$, where $x_{n_k}\in f^{-1}(\gamma'(n_k))$. Hence $f\circ\xi_{n_k}(t)$ lies on the geodesic $[[x_0],\gamma'(n)]$ in $T_X$. As $d^*([x_0],f\circ\xi_{n_k}(t))=t$, and $T_X$ is a tree, we must have that $f\circ\xi_{n_k}(t)=\gamma'(t)$. Therefore $f\circ\xi(t)=\gamma'(t)$ for all $t\in[0,\infty)$, so $f\circ\xi=\gamma'$. We conclude that $\xi$ is a geodesic ray such that $f\circ\xi\in\text{end}(\gamma)$.
\end{proof}

\begin{prop}
	\label{EndsHomeomorphic}
	Let $(X,d)$ be a proper geodesic metric space. Let $f:X\to T_X$ be the end-approximating map, and let $F:\emph{Ends}(X)\to\emph{Ends}(T_X)$ be defined by $F(\emph{end}(\gamma))=\emph{end}(f\circ\gamma)$ for every $\emph{end}(\gamma)\in\emph{Ends}(X)$. The map $F$ is a homeomorphism.
\end{prop}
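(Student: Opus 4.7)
The plan is to check the five standard ingredients (well-definedness, injectivity, surjectivity, continuity, and continuity of the inverse) in turn; each reduces almost immediately to one of the lemmas already proved in this subsection, so the proof should be short.

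First, that $F$ is well-defined and injective both follow from \Cref{Injective}, which is an iff: we have $\gamma_2 \in \text{end}(\gamma_1)$ if and only if $f\circ\gamma_2 \in \text{end}(f\circ\gamma_1)$, and this is exactly the statement that the value $\text{end}(f\circ\gamma)$ depends only on $\text{end}(\gamma)$ and that distinct ends of $X$ go to distinct ends of $T_X$. Surjectivity is immediate from \Cref{Surjective}: given any end of $T_X$ represented by a proper ray $\gamma$, that lemma produces a geodesic ray $\xi$ in $X$ with $f\circ\xi \in \text{end}(\gamma)$, hence $F(\text{end}(\xi)) = \text{end}(\gamma)$.

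For continuity the key tool is \Cref{PathCompCor}, which says that $x,y$ lie in the same path component of $X \setminus B(x_0,r)$ iff $[x],[y]$ lie in the same path component of $T_X \setminus B([x_0],r)$. Suppose $\text{end}(\gamma_n) \to \text{end}(\gamma)$ in $\text{Ends}(X)$. Fix $r \geqslant 0$; by definition there is a sequence $(N_n)$ such that for all $n$ sufficiently large $\gamma_n([N_n,\infty))$ and $\gamma([N_n,\infty))$ lie in a common path component of $X \setminus B(x_0,r)$. Applying \Cref{PathCompCor} pointwise to pairs $(\gamma_n(t),\gamma(s))$ with $s,t \geqslant N_n$ then places $f\circ\gamma_n([N_n,\infty))$ and $f\circ\gamma([N_n,\infty))$ in a common path component of $T_X \setminus B([x_0],r)$, giving $\text{end}(f\circ\gamma_n) \to \text{end}(f\circ\gamma)$.

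Continuity of $F^{-1}$ is the same argument run in reverse, which is where I would expect to spend the most care. Suppose $\text{end}(\eta_n) \to \text{end}(\eta)$ in $\text{Ends}(T_X)$. Use \Cref{Surjective} to choose geodesic representatives $\xi_n,\xi$ in $X$ with $f\circ\xi_n \in \text{end}(\eta_n)$ and $f\circ\xi \in \text{end}(\eta)$; since convergence of ends depends only on the equivalence classes, we may replace $\eta_n,\eta$ by $f\circ\xi_n,f\circ\xi$. Then the same sequence $(N_n)$ witnesses convergence in $T_X$, and the reverse implication of \Cref{PathCompCor} delivers the corresponding path-component statement in $X$, showing $\text{end}(\xi_n) \to \text{end}(\xi)$, i.e.\ $F^{-1}(\text{end}(\eta_n)) \to F^{-1}(\text{end}(\eta))$. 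The only nontrivial point is the routine check that convergence of ends is independent of the chosen representatives, which is built into the equivalence relation.
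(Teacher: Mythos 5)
Your proposal is correct and follows essentially the same route as the paper: well-definedness and injectivity from \Cref{Injective}, surjectivity from \Cref{Surjective}, and continuity in both directions by applying \Cref{PathCompCor} to tails of rays; the paper merely packages the same sequential-convergence computation inside a ``preimages of closed sets are closed'' check, which is equivalent here since the topology on the space of ends is defined by this notion of convergence.
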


\begin{proof}
	The fact that $F$ is well-defined and injective comes from \Cref{Injective}, and the fact that it is surjective comes from \Cref{Surjective}. It remains to show that $F$ and $F^{-1}$ are continuous.
	
	Let $V\subset\text{Ends}(T_X)$ be closed. We want to show that $F^{-1}(V)$ is closed. Let $(\text{end}(\xi_n))_{n\in\mathbb{N}}$ be a sequence in $F^{-1}(V)$ such that $\text{end}(\xi_n)\to\text{end}(\xi)$, and choose these representatives to be geodesic rays. We want to show that $\text{end}(\xi)\in F^{-1}(V)$.
	
	We first note that $(\text{end}(f\circ\xi_n))_{n\in\mathbb{N}}$ gives a sequence in $V$. We want to show that $\text{end}(f\circ\xi_n)\to\text{end}(f\circ \xi)$. Let $r\in[0,\infty)$. As $\text{end}(\xi_n)\to\text{end}(\xi)$, there exists a sequence of natural numbers $(N_n)_{n\in\mathbb{N}}$ such that $\xi_n([N_n,\infty))$ and $\xi([N_n,\infty))$ lie in the same path component of $X\backslash B(x_0,r)$ for large enough $n$. Clearly $f\circ\xi_n([N_n,\infty))$ and $f\circ\xi([N_n,\infty))$ lie in  $T_X\backslash B([x_0],r)$ for large enough $n$, and by \Cref{PathCompCor}, they must lie in the same path component. Hence $\text{end}(f\circ\xi_n)\to\text{end}(f\circ \xi)$, so as $V$ is closed, we must have that $\text{end}(f\circ \xi)\in V$, so $\text{end}(\xi)\in F^{-1}(V)$, so $F^{-1}(V)$ is closed. Therefore $F$ is continuous.
	
	The argument for $F^{-1}$ is almost identical. Let $U\subset\text{Ends}(X)$ be closed. We want to show that $F(U)$ is closed. Let $(\text{end}(\gamma_n))_{n\in\mathbb{N}}$ be a sequence in $F(U)$ such that $\text{end}(\gamma_n)\to\text{end}(\gamma)$, and choose these representatives to be geodesic rays. We want to show that $\text{end}(\gamma)\in F(U)$.
	
	We pick geodesic rays $\xi_n,\xi:[0,\infty)\to X$ such that $\text{end}(f\circ\xi_n)=\text{end}(\gamma_n)$ and $\text{end}(f\circ\xi)=\text{end}(\gamma)$, and note that $(\text{end}(\xi_n))_{n\in\mathbb{N}}$ gives a sequence in $U$, as $F$ is bijective. We want to show that $\text{end}(\xi_n)\to\text{end}(\xi)$. Let $r\in[0,\infty)$. As $\text{end}(f\circ\xi_n)\to\text{end}(f\circ\xi)$, there exists a sequence of natural numbers $(N_n)_{n\in\mathbb{N}}$ such that $f\circ\xi_n([N_n,\infty))$ and $f\circ\xi([N_n,\infty))$ lie in the same path component of $T_X\backslash B([x_0],r)$ for large enough $n$.
	
	Clearly, $\xi_n([N_n,\infty))$ and $\xi([N_n,\infty))$ lie in  $X\backslash B(x_0,r)$ for large enough $n$, and by \Cref{PathCompCor}, they must lie in the same path component. Hence $\text{end}(\xi_n)\to\text{end}(\xi)$, so as $U$ is closed, we must have $\text{end}(\xi)\in U$, so $\text{end}(f\circ\xi)\in F(U)$, so $F(U)$ is closed. Therefore $F^{-1}$ is continuous.
\end{proof}

We can therefore obtain the following result.

\begin{prop}
	\label{LocallyFiniteEnds}
	Let $(X,d)$ be a proper geodesic metric space. Then there exists a locally finite simplicial tree $T$ such that $\emph{Ends}(X)$ is homeomorphic to $\emph{Ends}(T)$.
\end{prop}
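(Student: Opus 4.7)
The plan is to combine three ingredients already established in the paper: the homeomorphism $\mathrm{Ends}(X)\cong\mathrm{Ends}(T_X)$ from Proposition \ref{EndsHomeomorphic}, the fact that $T_X$ is itself a proper $\mathbb{R}$-tree when $X$ is proper, and Corollary \ref{ProperFinite} which supplies a locally finite simplicial tree $(1,C)$-quasi-isometric to any proper quasi-tree.

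The first step is to verify that $T_X$ is proper. In the proof of Lemma \ref{ZeroHyp} it was observed that $d^*([x_0],[x])=d(x_0,x)$ for all $x\in X$, and since the end-approximating map $f$ is continuous and non-expanding, it sends $B(x_0,r)$ onto $B^*([x_0],r)$. Properness of $X$ makes $B(x_0,r)$ compact, so its continuous image $B^*([x_0],r)$ is compact. Because $T_X$ is a geodesic $\mathbb{R}$-tree, any closed ball $B^*([y],r)$ is contained in $B^*([x_0],d^*([x_0],[y])+r)$, hence is a closed subset of a compact set and therefore compact. Thus $T_X$ is a proper $\mathbb{R}$-tree, and in particular a proper quasi-tree (every $\mathbb{R}$-tree is $(1,2)$-quasi-isometric to a simplicial tree by Proposition \ref{Simplicial}).

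Now I would apply Corollary \ref{ProperFinite} to $T_X$: there exists a locally finite simplicial tree $T$ and a $(1,C)$-quasi-isometry $g:T_X\to T$. Since locally finite simplicial trees are proper geodesic metric spaces, the standard fact cited in the text, namely that quasi-isometries between proper geodesic metric spaces induce homeomorphisms on spaces of ends \cite[Proposition~I.8.29]{Bridson1999}, yields a homeomorphism $\mathrm{Ends}(T_X)\cong\mathrm{Ends}(T)$.

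Composing with Proposition \ref{EndsHomeomorphic} gives the desired homeomorphism
\begin{equation*}
\mathrm{Ends}(X)\cong\mathrm{Ends}(T_X)\cong\mathrm{Ends}(T).
\end{equation*}
There is no real obstacle here; the only point that requires a moment's care is the verification that $T_X$ is proper, after which all the heavy lifting has already been done in earlier sections.
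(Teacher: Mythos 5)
Your proposal is correct and follows essentially the same route as the paper: reduce to showing $T_X$ is proper via $f^{-1}(B([x_0],r))=B(x_0,r)$ and the compactness of balls in $X$, then invoke \Cref{ProperFinite} and quasi-isometry invariance of ends together with \Cref{EndsHomeomorphic}. Your properness argument (compact image of a compact ball under the continuous surjection $f$) is a slightly cleaner packaging of the paper's sequential-compactness argument, but the content is identical.
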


\begin{proof}
	As the space of ends is preserved by quasi-isometry, it suffices to show that $T_X$ is quasi-isometric to a locally finite simplicial tree. By \Cref{ProperFinite}, it therefore suffices to show that $T_X$ is proper.
	
	Let $r>0$, and recall that for the end-approximating map $f:X\to T_X$, we have that $f^{-1}(B([x_0],r))=B(x_0,r)$. Let $([x_n])$ be a sequence in $B([x_0],r)$, and consider the sequence of representatives $(x_n)$ in $B(x_0,r)$. As $X$ is proper, $B(x_0,r)$ is compact, so there exist some subsequence $(x_{n_k})$ and some point $x\in B(x_0,r)$ such that $x_{n_k}\to x$. As $f$ is continuous, we have that $[x_{n_k}]\to [x]$. We know that $[x]\in B([x_0],r)$, so $B([x_0],r)$ is compact.
	
	Let $[y]\in T_X$, and let $R>0$. There exists $r>0$ such that $B([y],R)\subset B([x_0],r)$. Therefore $B([y],R)$ is a closed subset of a compact set, so is itself compact. We conclude that $T_X$ is a proper metric space.
\end{proof}

\subsection{Visual metrics on the boundaries of quasi-trees}

For a tree, its space of ends is homeomorphic to its boundary. This is consequently also true for quasi-trees, as both the space of ends and the geodesic boundary are preserved up to homeomorphism by quasi-isometries. In the case where $X$ is a proper quasi-tree, the result of \Cref{EndsHomeomorphic} is a trivial consequence of this fact.

Not only is the topology on the boundaries preserved by quasi-isometries, some of the metric structure is too. Given a quasi-isometry between two hyperbolic geodesic metric spaces, the induced homeomorphism between their boundaries is known to be quasi-symmetric \cite{Bonk2000,Bridson1999,Mackay2010}. There is a family of metrics on these boundaries such that this is the case, known as the visual metrics.

When we have a $(1,C)$-quasi-isometry between spaces, it is known that the induced homeomorphism between the boundaries will in fact be a bi-Lipschitz map with respect to the visual metrics \cite{Bonk2000}. In particular, by how visual metrics are defined, this means that the pullback of a visual metric on one of the boundaries will be a visual metric on the other, so the homeomorphism can be instead viewed as an isometry.

In the case of $\mathbb{R}$-trees, given a choice of parameter, the visual metrics on the boundary with that parameter are all bi-Lipschitz equivalent to a visual metric which can be written down explicitly in a standard way. By \Cref{QuasiIsometry} and the above, we can choose a metric on the boundary of a quasi-tree $X$ such that it is isometric to the boundary of $T_X$ under this standard metric. Moreover, this metric on the boundary of $X$ is a natural extension of the standard visual metric on the boundary of $T_X$.

Many of the definitions and facts about boundaries used here are taken from the paper on the topic by Kapovich and Benakli \cite{Kapovich2002}.

\begin{con}
	When working with metrics on boundaries of $X$, everything will be done with respect to a basepoint $x_0\in X$. As in previous sections, this will be assumed throughout to have already been chosen. The choice of basepoint does not matter (see \Cref{Basepoint}).
\end{con}

\begin{defn}
	Let $(X,d)$ be a hyperbolic metric space. A sequence $(x_n)$ in $X$ \emph{converges to infinity} if $\lim_{i,j\to\infty}(x_i,x_j)_{x_0}=\infty$. We say that two such sequences are \emph{equivalent}, and write $(x_n)\sim(y_n)$, if $\liminf_{i,j\to\infty}(x_i,y_j)_{x_0}=\infty$. The \emph{sequential boundary} of $X$ is the set of equivalence classes of sequences that converge to infinity, and is denoted by $\partial X$.
\end{defn}

\begin{note}
	When $p\in\partial X$, and a sequence $(x_n)$ converging to infinity in X belongs to the equivalence class $p$, we write $x_n\to p$.
\end{note}

As mentioned before, it is possible to put metrics on this boundary, although there is no single canonical metric. To define the family of metrics we are interested in, we first note that the Gromov product can be extended to the sequential boundary.

\begin{defn}
	Let $X$ be a hyperbolic metric space. For $p,q\in\partial X$ the \emph{Gromov product} of $p$ and $q$ is
	\begin{equation*}
	(p,q)_{x_0}=\sup\{\liminf_{i,j\to\infty}(x_i,y_j)_{x_0}:x_i\to\xi, y_j\to\gamma\}.
	\end{equation*}
\end{defn}

\begin{defn}
	Let $X$ be a hyperbolic metric space, and let $a>1$. A metric $d_{\partial X}$ on $\partial X$ is a \emph{visual metric} with respect to the \emph{visual parameter} $a$ if there exists $C\geqslant 1$ such that for every $p,q\in\partial X$
	\begin{equation*}
	\frac{1}{C}a^{-(p,q)_{x_0}}\leqslant d_{\partial X}(p,q)\leqslant Ca^{-(p,q)_{x_0}}.
	\end{equation*}
\end{defn}

It is known that for any hyperbolic space $X$, there exists $a_0>1$ such that for every $a\in(1,a_0)$, the sequential boundary $\partial X$ admits a visual metric with parameter $a$, see \cite[Chapter III.H]{Bridson1999} or \cite{Groves2019} for example. If $X$ is an $\mathbb{R}$-tree, then $a_0=\infty$, and for every parameter $a>1$, there is a standard visual metric given by $d_{\partial X}(p,q)= a^{-(p,q)_{x_0}}$.

There is another notion of boundary that applies when $X$ is also a geodesic space.

\begin{defn}
	Let $(X,d)$ be a metric space, and let $A$ and $B$ be subsets of $X$. The \emph{Hausdorff distance} between $A$ and $B$ is
	\begin{equation*}
	d_H(A,B)=\inf\{\varepsilon\geqslant 0 : A\subset N(B,\varepsilon)\text{ and }B\subset N(A,\varepsilon)\},
	\end{equation*}
	where $N(A,\varepsilon)$ is the closed $\varepsilon$-neighbourhood of $A$.
\end{defn}

\begin{defn}
	Let $(X,d)$ be a proper, hyperbolic, and geodesic metric space. We say that two geodesic rays $p:[0,\infty)\to X$ and $q:[0,\infty)\to X$ are equivalent, and write $p\sim q$, if $d_H(p,q)$ is finite. The \emph{geodesic boundary} of $X$ is the set of equivalence classes of geodesic rays in $X$, and is denoted by $\partial_{\infty} X$.
\end{defn}

It is again possible to put a metric on this boundary, although as before there is no single canonical metric.

\begin{rem}
	For any such space $X$, and any $p,q\in \partial_{\infty}X$ that are not equal, there will exist a bi-infinite geodesic $\gamma:\mathbb{R}\to X$ from $p$ to $q$, in the sense that $\gamma|_{[0,\infty)}\in q$ and $\widehat{\gamma}|_{[0,\infty)}\in p$, where $\widehat{\gamma}(t)=\gamma(-t)$. When $X$ is an $\mathbb{R}$-tree, this bi-infinite geodesic is unique.
\end{rem}

\begin{defn}
	Let $(X,d)$ be a proper, hyperbolic, and geodesic metric space, and let $a>1$. A metric $d_{\partial_{\infty} X}$ on $\partial_{\infty} X$ is a \emph{visual metric} with respect to the \emph{visual parameter} $a$ if there exists $C\geqslant 1$ such that for every $p,q\in\partial_{\infty} X$, and every bi-infinite geodesic $\gamma$ from $p$ to $q$, we have that
	\begin{equation*}
	\frac{1}{C}a^{-d(x_0,\gamma)}\leqslant d_{\partial_{\infty} X}(p,q)\leqslant Ca^{-d(x_0,\gamma)}.
	\end{equation*}
\end{defn}

As with the sequential boundary, for any such space $X$, there exists $a_0>1$ such that for every $a\in(1,a_0)$, the geodesic boundary $\partial_{\infty} X$ admits a visual metric with parameter $a$. If $X$ is an $\mathbb{R}$-tree, then $a_0=\infty$, and for every parameter $a>1$, there is a standard visual metric given by $d_{\partial_{\infty} X}(p,q)= a^{-d(x_0,\gamma)}$ \cite{Kapovich2002}.

\begin{rem}
	\label{Basepoint}
	The visual metrics on the sequential and geodesic boundaries are defined in terms of a basepoint, however on either boundary two visual metrics with the same parameter but different basepoints will be bi-Lipschitz equivalent \cite{Kapovich2002,Buyalo2007}.
\end{rem}

It is well known that a quasi-isometry $f$ between two hyperbolic spaces $X$ and $Y$ induces a canonical homeomorphism $\partial f:\partial X\to \partial Y$, and if $X$ and $Y$ are also geodesic, then the same is true for $\partial_{\infty} f:\partial_{\infty}X\to\partial_{\infty}Y$. This homeomorphism has strong metric properties depending on the strength of the quasi-isometry, see \cite{Buyalo2007,Kapovich2002}, however we will be interested in the following case.

\begin{thm}
	\emph{\cite[Theorem~6.5]{Bonk2000}}
	Let $X$ and $Y$ be hyperbolic geodesic metric spaces, and let $f:X\to Y$ be a $(1,C)$-quasi-isometry. Let $d_{\partial X}$ and $d_{\partial Y}$ be visual metrics with parameter $a>1$ on $\partial X$ and $\partial Y$, respectively. Then the induced map $\partial f:(\partial X,d_{\partial X})\to(\partial Y,d_{\partial Y})$ is bi-Lipschitz.
\end{thm}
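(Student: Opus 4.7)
The plan is to prove bi-Lipschitz equivalence of the boundary map by first showing that the Gromov product is preserved up to a bounded additive constant, then translating this through the exponential in the definition of visual metric. Because the visual metric is controlled by $a^{-(p,q)_{x_0}}$ on both sides and $a^{\text{bounded}}$ is just a multiplicative constant, a bounded additive error in the Gromov product becomes a bounded multiplicative error in the boundary metric.

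First I would observe that since $f:X\to Y$ is a $(1,C)$-quasi-isometry, for all $u,v\in X$ we have $|d_X(u,v)-d_Y(f(u),f(v))|\leqslant C$. The Gromov product is a linear combination of three distances, so substituting gives, for all $x,y\in X$,
\[
\bigl|(x,y)_{x_0}-(f(x),f(y))_{f(x_0)}\bigr|\leqslant \tfrac{3C}{2}.
\]

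Next I would extend this to the boundary. Given $p,q\in\partial X$, a standard computation (using that $f$ distorts Gromov products by at most $3C/2$ and that $f$ sends sequences converging to infinity in $X$ to sequences converging to infinity in $Y$) shows that $\partial f$ is well-defined and that, taking $\liminf$ of the pointwise estimate along representative sequences and then the supremum, one obtains
\[
\bigl|(p,q)_{x_0}-(\partial f(p),\partial f(q))_{f(x_0)}\bigr|\leqslant \tfrac{3C}{2}.
\]
The mild subtlety here is that the sequential Gromov product is a supremum over all sequences in a given equivalence class; the additive error on individual sequences is uniform, so it survives the sup.

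Finally I would combine with the visual metric inequalities. The metric $d_{\partial Y}$ is defined with respect to some basepoint in $Y$, not necessarily $f(x_0)$, but by the basepoint-independence remark any visual metric with parameter $a$ based at a different point is bi-Lipschitz equivalent, so without loss of generality assume it is based at $f(x_0)$. With constants $K,K'\geqslant 1$ from the two visual metric defining inequalities,
\[
d_{\partial Y}(\partial f(p),\partial f(q)) \leqslant K'\,a^{-(\partial f(p),\partial f(q))_{f(x_0)}} \leqslant K'a^{3C/2}\,a^{-(p,q)_{x_0}} \leqslant KK'a^{3C/2}\,d_{\partial X}(p,q),
\]
and the reverse inequality follows symmetrically. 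This gives the desired bi-Lipschitz constant $KK'a^{3C/2}$. The main step requiring care is the passage from the interior Gromov-product estimate to its boundary analogue together with the basepoint-change on $\partial Y$; once those are in place the rest is just exponentiating.
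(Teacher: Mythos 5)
Your proposal is correct and follows essentially the same route as the paper, which defers to \cite[Theorem~6.5]{Bonk2000} but justifies the application in exactly your terms: a $(1,C)$-quasi-isometry changes Gromov products by at most a uniform additive constant, and exponentiating through the visual-metric inequalities turns that additive error into a multiplicative (bi-Lipschitz) one. The only point to tighten is the passage to the boundary Gromov product: the supremum defining $(\partial f(p),\partial f(q))_{f(x_0)}$ ranges over \emph{all} representative sequences in $Y$, not just images under $f$, so for the upper bound you should invoke the standard fact that in a $\delta$-hyperbolic space this supremum exceeds the $\liminf$ along any fixed representatives by at most $2\delta$, which merely worsens your constant to $\tfrac{3C}{2}+2\delta$ and does not affect the conclusion.
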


This may not be immediately obvious from reading Theorem 6.5 in \cite{Bonk2000}, however we note that what they call a $(1,C)$-rough similarity is exactly a $(1,C)$-quasi-isometry, and moreover that if $f:X\to Y$ is a $(1,C)$-quasi-isometry then there exists $K\geqslant 0$ such that for every $x,y,x_0\in X$ we have that 
\begin{equation*}
	(x,y)_{x_0}-K\leqslant(f(x),f(y))_{f(x_0)}\leqslant (x,y)_{x_0}+K.
\end{equation*}
Their reasoning therefore gives us that $\partial f$ is what they call a $(1,\lambda)$-snowflake map, which is precisely a bi-Lipschitz map.

\begin{rem}
	As noted before, the family of visual metrics on a boundary with the same parameter is a bi-Lipschitz equivalence class, so if we define a metric $d_X$ on $\partial X$ by $d_{\partial X}(p,q)=d_{\partial Y}(\partial f(p),\partial f(q))$ then this would be a visual metric on $\partial X$, which would allow us to view $\partial f$ as an isometry.
\end{rem}

In a hyperbolic geodesic space $X$, it is well known that there is a natural homeomorphism between $\partial X$ and $\partial_{\infty} X$, and moreover that visual metrics on $\partial X$ and $\partial_{\infty} X$ with the same parameter are bi-Lipschitz equivalent, for example, see \cite{Drutu2018,Hamann2018}. We therefore also get the following from \cite{Bonk2000}.

\begin{cor}
	\emph{\cite{Bonk2000}}
	Let $X$ and $Y$ be hyperbolic geodesic metric spaces, and let $f:X\to Y$ be a $(1,C)$-quasi-isometry. Let $d_{\partial_{\infty} X}$ and $d_{\partial_{\infty} Y}$ be visual metrics with parameter $a>1$ on $\partial_{\infty} X$ and $\partial_{\infty} Y$, respectively. Then the induced map $\partial_{\infty} f:(\partial_{\infty} X,d_{\partial_{\infty} X})\to(\partial_{\infty} Y,d_{\partial_{\infty} Y})$ is bi-Lipschitz.
\end{cor}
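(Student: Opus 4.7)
The plan is to reduce the statement about the geodesic boundary directly to the preceding theorem about the sequential boundary, using the standard comparison between the two notions of boundary. Specifically, for a hyperbolic geodesic metric space $X$, there is a natural homeomorphism $\Phi_X\colon\partial_\infty X\to\partial X$ sending the class of a geodesic ray $\gamma$ to the class of the sequence $(\gamma(n))_{n\in\mathbb{N}}$, and the analogue holds for $Y$. Moreover, under $\Phi_X$ any visual metric on $\partial_\infty X$ with parameter $a$ corresponds (up to bi-Lipschitz equivalence) to a visual metric on $\partial X$ with the same parameter $a$, as cited from \cite{Drutu2018,Hamann2018}.

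First, I would check that the diagram involving $f$ and the boundary maps commutes in the appropriate sense: that is, $\Phi_Y\circ\partial_\infty f=\partial f\circ\Phi_X$. This is a formal verification — given a geodesic ray $\gamma$ in $X$, the sequence $(f(\gamma(n)))$ represents $\partial f([(\gamma(n))])$ by definition of the induced map on the sequential boundary, and by definition of $\partial_\infty f$ this class agrees with $\Phi_Y(\partial_\infty f([\gamma]))$. One small point to be careful about is that the induced map $\partial_\infty f$ on the geodesic boundary requires a choice of representing geodesic rays for the image; but since any two such representatives define the same class in $\partial_\infty Y$, the definition is unambiguous and the commutativity holds at the level of equivalence classes.

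Next, pull back the visual metric $d_{\partial_\infty X}$ via $\Phi_X^{-1}$ to obtain a metric $\tilde d_{\partial X}$ on $\partial X$, which is a visual metric on $\partial X$ with parameter $a$ by the comparison result above, and similarly define $\tilde d_{\partial Y}$ on $\partial Y$. The preceding theorem (Theorem 6.5 of \cite{Bonk2000}) then gives that $\partial f\colon(\partial X,\tilde d_{\partial X})\to(\partial Y,\tilde d_{\partial Y})$ is bi-Lipschitz. Since $\Phi_X$ and $\Phi_Y$ are isometries with respect to these pulled-back metrics, the commutative diagram forces $\partial_\infty f\colon(\partial_\infty X,d_{\partial_\infty X})\to(\partial_\infty Y,d_{\partial_\infty Y})$ to be bi-Lipschitz as well.

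The only mild obstacle is that the comparison between visual metrics on $\partial X$ and $\partial_\infty X$ is usually stated up to bi-Lipschitz equivalence rather than as an isometry, so one has to absorb the extra multiplicative constants into the bi-Lipschitz constants of the final conclusion. This is routine: a composition of three bi-Lipschitz maps is bi-Lipschitz, with constants multiplying. No parameter gets changed along the way because all three comparisons are carried out with the same visual parameter $a$, which is what allows the argument to stay within a single bi-Lipschitz class throughout.
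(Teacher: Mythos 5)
Your proposal is correct and follows exactly the route the paper intends: the paper derives this corollary from the preceding theorem via the natural homeomorphism between $\partial X$ and $\partial_{\infty}X$ and the bi-Lipschitz equivalence of visual metrics with the same parameter, citing \cite{Drutu2018,Hamann2018}, which is precisely your reduction. Your write-up simply makes explicit the commutativity of the diagram and the absorption of the comparison constants, which the paper leaves implicit.
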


\begin{cor}
	\label{SequentialRealv2}
	Let $X$ be a quasi-tree. For every $a>1$, there exists a visual metric $d_{\partial X}$ on $\partial X$ with visual parameter $a$ such that $(\partial X,d_{\partial X})$ is isometric to $(\partial T_X,d_{\partial T_X})$, where $d_{\partial T_X}$ is the standard visual metric on $\partial T_X$ with visual parameter $a$.
\end{cor}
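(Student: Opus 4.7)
The plan is to pull back the standard visual metric from $\partial T_X$ to $\partial X$ using the boundary map induced by the end-approximating quasi-isometry, and then verify that the result is a visual metric on $\partial X$ with parameter $a$.

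First, I would invoke \Cref{QuasiIsometry} to obtain the end-approximating map $f: X \to T_X$, which is a $(1,C)$-quasi-isometry onto an $\mathbb{R}$-tree. Since $f$ is a quasi-isometry between hyperbolic geodesic spaces, it induces a canonical homeomorphism $\partial f: \partial X \to \partial T_X$. Because $T_X$ is an $\mathbb{R}$-tree, the standard formula $d_{\partial T_X}(p,q) = a^{-(p,q)_{[x_0]}}$ defines a visual metric on $\partial T_X$ with visual parameter $a$ for every $a > 1$.

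Next, I would define the candidate metric on $\partial X$ by pullback:
\begin{equation*}
d_{\partial X}(p,q) := d_{\partial T_X}(\partial f(p), \partial f(q)).
\end{equation*}
Since $\partial f$ is a bijection, this is a genuine metric on $\partial X$, and by construction $\partial f: (\partial X, d_{\partial X}) \to (\partial T_X, d_{\partial T_X})$ is tautologically an isometry.

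The one remaining task is to verify that $d_{\partial X}$ is indeed a visual metric on $\partial X$ with parameter $a$. For this I would use the standard fact (as highlighted in the excerpt preceding \Cref{SequentialRealv2}) that a $(1,C)$-quasi-isometry distorts Gromov products by at most an additive constant: there exists $K = K(C,\delta) \geqslant 0$ such that
\begin{equation*}
(p,q)_{x_0} - K \leqslant (\partial f(p), \partial f(q))_{f(x_0)} \leqslant (p,q)_{x_0} + K
\end{equation*}
for all $p,q \in \partial X$. Substituting into the definition of $d_{\partial X}$ then yields $a^{-K} a^{-(p,q)_{x_0}} \leqslant d_{\partial X}(p,q) \leqslant a^{K} a^{-(p,q)_{x_0}}$, so $d_{\partial X}$ satisfies the visual metric condition with constant $a^K$. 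There is no real obstacle here, since all the technical work has already been carried out in \Cref{QuasiIsometry} and in the Bonk–Schramm result quoted just before the corollary; the only care required is to note that the visual parameter range for an $\mathbb{R}$-tree is the full interval $(1,\infty)$, which is exactly what allows the conclusion to hold for every $a > 1$ rather than only for $a$ in some smaller range dictated by the hyperbolicity constant of $X$.
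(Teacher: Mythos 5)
Your proposal is correct and follows essentially the same route as the paper: both pull back the standard visual metric on $\partial T_X$ along the boundary map $\partial f$ induced by the $(1,C)$-quasi-isometry of \Cref{QuasiIsometry}, and both justify that the pullback is a visual metric with parameter $a$ via the additive (hence multiplicatively bounded after exponentiation) distortion of Gromov products noted in the discussion of the Bonk--Schramm theorem. The only cosmetic difference is that you verify the visual-metric inequality explicitly, whereas the paper delegates it to the remark that visual metrics with a fixed parameter form a bi-Lipschitz class.
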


\begin{proof}
	Let $f:X\to T_X$ be the end-approximating map. Then \Cref{QuasiIsometry} tells us that $f$ is a $(1,C)$-quasi-isometry. Let $\partial f:\partial X\to T_X$ be the induced bi-Lipschitz homeomorphism between the boundaries, and for $a>1$ let $d_{\partial T_X}$ be the standard visual metric with parameter $a$ on $\partial T_X$. Then $d_{\partial X}(p,q)=d_{\partial T_X}(\partial f(p),\partial f(q))$ is a visual metric on $\partial X$ with visual parameter $a$, and $\partial f$ acts as an isometry between the boundaries.
\end{proof}

The exact same statement holds if we replace $T_X$ by $\Gamma_X$, the simplicial tree constructed from $T_X$ in Section 4.2. We also get the same result for both $T_X$ and $\Gamma_X$ if we suppose that $X$ is a proper quasi-tree, and then consider the geodesic boundaries.

In the case where the metric on $\partial X$ is the pullback of the standard visual metric on $\partial T_X$ (with respect to some parameter and basepoint), we can in fact define this metric explicitly in a very natural way, using only the geometry of $X$. We start with the sequential boundary. Let $p,q\in\partial X$. Then for any $a>1$ we define
\begin{equation*}
	d_{\partial X}(p,q)=d_{\partial T_X}(\partial f(p),\partial f(q))=a^{-(\partial f(p),\partial f(p))^*_{[x_0]}}.
\end{equation*}
Let $x_i\to p$ and $y_j\to q$. The homeomorphism $\partial f: X\to T_X$ is defined such that $[x_n]\to \partial f(p)$ and $[y_n]\to \partial f(q)$. From the definition of the Gromov product on the boundary, the geometry of an $\mathbb{R}$-tree, and \Cref{PathCompCor} regarding path components we can see that
\begin{align*}
(\partial f(p),\partial f(q))^*_{[x_0]} & =\sup\{r\in[0,\infty):\text{ for }[x_n]\to \partial f(p)\text{ and }[y_n]\to \partial f(q)\text{ there are infinitely many}
\\ & \quad\text{terms in each sequence that lie in the same path component of } T_X\backslash B([x_0],r)\}
\\ & =\sup\{r\in[0,\infty):\text{ for }x_n\to p\text{ and }y_n\to q\text{ there are infinitely many terms}
\\ & \quad\text{in each sequence that lie in the same path component of }X\backslash B(x_0,r)\}.
\end{align*}
Note that the metric on $\partial X$ is given in terms of this value. In particular, it is clear that if $X$ is an $\mathbb{R}$-tree, then this simply returns the standard visual metric with parameter $a$, as expected given that $f$ is the identity in this case.

We can do the same thing with the geodesic boundary. Let $p,q\in\partial_{\infty}X$. As $f$ is an isometry when restricted to geodesics based at $x_0$, and $T_X$ is an $\mathbb{R}$-tree, the image of the equivalence class $p$ under $\partial_{\infty} f$ will be a single geodesic in $T_X$, with the same being true for the image of $q$. For any $a>1$, we define
\begin{equation*}
d_{\partial_{\infty} X}(p,q)=d_{\partial_{\infty} T_X}(\partial_{\infty} f(p),\partial_{\infty} f(q))=a^{-d^*([x_0],[z])},
\end{equation*}
where $[[x_0],[z]]=p'\cap q'$, with the geodesics $p'$ and $q'$ being the images of the equivalence classes $p$ and $q$. In particular, we can see that
\begin{align*}
d^*([x_0],[z]) & =\inf\{r\in[0,\infty):p'\backslash B([x_0],r)\text{ and }q'\backslash B([x_0],r)\text{ lie in different  path}
\\ & \quad\text{ components of }T_X\backslash B([x_0],r)\}
\\ & =\inf\{r\in[0,\infty):r_p\backslash B(x_0,r)\text{ and }r_q\backslash B(x_0,r)\text{ lie in different path}
\\ & \quad\text{ components of }X\backslash B(x_0,r),\text{ for any choice of }r_p\in p\text{ and }r_q\in q\},
\end{align*}
and we note that $d_{\partial_{\infty}X}(p,q)$ is given in terms of this value. Again, if $X$ is an $\mathbb{R}$-tree then this will give us the standard visual metric with parameter $a$, so these metrics on the boundaries of quasi-trees can be viewed as extensions of the standard visual metrics on the boundaries of $\mathbb{R}$-trees.

\bibliographystyle{alpha}
\bibliography{/home/kerr/Nextcloud/Documents/Projects/references}

\end{document}